\documentclass[12pt,letter]{article}
\usepackage[margin=1in]{geometry}
\usepackage[colorlinks=true,citecolor=blue,linkcolor=blue,urlcolor=blue]{hyperref}
\usepackage{booktabs}  
\usepackage{amsmath,amssymb,amsfonts,amsthm}
\usepackage{dsfont}
\usepackage{graphicx}
\graphicspath{{figure_p1/}{./}}
\usepackage{bbm}
\usepackage{xcolor}
\usepackage{algorithm}
\usepackage{algpseudocode}
\graphicspath{{./},{./figures/}}
\usepackage{tikz}
\usepackage{matlab-prettifier}
\usepackage[mathscr]{euscript}
\usepackage{url}

\newtheorem{theorem}{Theorem}
\newtheorem{corollary}{Corollary}
\newtheorem{lemma}{Lemma}
\newtheorem{proposition}{Proposition}

\newtheorem{problem}{Problem}
\newtheorem{remark}{Remark}
\newtheorem{definition}{Definition}
\newtheorem{assumption}{Assumption}

\newcommand{\Ee}{{\rm e}}

\newcommand{\trace}{{\rm tr}}

\newcommand{\vsp}{\vspace{0.07in}}

\newcommand{\calE}{{\mathcal E}}    
\newcommand{\calF}{{\mathcal F}}    
\newcommand{\calG}{{\mathcal G}}    
\newcommand{\calH}{{\mathcal H}}

\newcommand{\calK}{{\mathcal K}}

\newcommand{\calN}{{\mathcal N}}    
    
\newcommand{\calP}{{\mathcal P}}

\newcommand{\calS}{{\mathcal S}}    
\newcommand{\calT}{{\mathcal T}}

\newcommand{\bbE}{{\mathbb E}}

\newcommand{\bbP}{{\mathbb P}}    
    
\newcommand{\bbR}{{\mathbb R}}    
\newcommand{\bbS}{{\mathbb S}}

\newcommand{\rmd}{{\rm d}}

\newcommand{\what}[1]{\widehat{#1}}

\newcommand{\defi}{\mathrel{:=}}

\newenvironment{keywords}
{\par\noindent\textbf{Keywords:}\ }
{\par}

\begin{document}

\title{\resizebox{\textwidth}{!}{Distribution Steering via Sliced Optimal Transport Control}}

\author{
Kaito Ito\\
\small Department of Information Physics and Computing\\
\small The University of Tokyo, Tokyo 113-8654, Japan\\
\small \texttt{kaito@g.ecc.u-tokyo.ac.jp}
\and
Anqi Dong\\
\small Department of Decision and Control Systems\\
\small Department of Mathematics and Digital Futures\\
\small KTH Royal Institute of Technology, 100 44 Stockholm, Sweden\\
\small \texttt{anqid@kth.se}
}

\date{}

\maketitle
\thispagestyle{empty}

\begin{abstract}
Distribution steering seeks feedback laws that drive the state law of a dynamical system between prescribed initial and terminal distributions. Optimal transport provides a natural geometric approach, but its implementation generally requires a transport map or coupling in the full state space. Sliced optimal transport avoids this full-dimensional construction through one-dimensional projections. Yet, the resulting projected maps specify only directional displacements and do not by themselves prescribe a realizable feedback law. To this end, we develop a finite-horizon control framework based on sliced optimal transport. At each sampling instant, a projected optimal transport map defines a directional terminal condition, whose minimum-energy realization yields a randomized single-direction controller. Averaging over projection directions gives a deterministic sliced feedback. For the single-integrator dynamics, the averaged feedback makes the sliced Wasserstein distance to the target non-increasing. For Gaussian endpoint laws, it is affine, preserves Gaussianity, and steers the mean and covariance to their prescribed terminal values. We further identify a law-dependent gain that yields linear decay of the sliced Wasserstein distance together with an explicit characterization of the control energy. We also prove that the randomized controller converges to the averaged sliced flow as the sampling period vanishes. Finally, we extend the construction to linear dynamical systems. Reachability-normalized coordinates allow instantaneous realization of the sliced velocity for uniformly fully actuated systems, while local controllability Gramians provide exact finite-step realization for general controllable systems. Numerical examples illustrate the resulting distributional flows.
\end{abstract}

\begin{keywords}
Optimal transport, sliced Wasserstein distance, distribution steering,
feedback control, linear dynamical systems
\end{keywords}

\section{Introduction}\label{sec:introduction}

Distribution steering seeks to drive the probability law of a dynamical system between prescribed distributions through suitable control inputs. Optimal transport (OT) provides a natural geometric approach to this problem, with origins tracing back to Monge \cite{Monge1781memoire} and the subsequent relaxation of Kantorovich \cite{Kantorovich1942translocation}. Optimal transport has since developed into a broad theory for comparing, interpolating, and transforming probability distributions \cite{Rachev1998mass,Villani2003,villani2009optimal,Peyre2019computational}. A distinguishing feature of OT is that it accounts explicitly for displacement in the underlying state space and therefore retains geometric information that is absent from pointwise comparisons of probability densities. For quadratic transportation cost, the Wasserstein distance admits the dynamic formulation of Benamou and Brenier~\cite{Benamou2000,dong2024monge}, which identifies transport with the minimum kinetic action required to move a density between prescribed endpoint marginals. In this formulation, the density evolves according to the continuity equation and the velocity field plays the role of a control input. This interpretation has motivated a broad line of work on distribution steering, including Schr\"odinger bridge formulations, covariance steering, Gaussian-mixture steering, and optimal-transport-based predictive control \cite{dong2024monge,stephanovitch2025optimal,chen2015optimal1,chen2015optimal2,Chen2018optimal,Ito2023entropic,Ito2023lcss,ito2024maximum}. For controllable linear dynamical systems, distribution steering can further be related to quadratic optimal transport through a transformation involving the finite-horizon reachability Gramian~\cite{Chen2017optimal}.

A common feature of these formulations is their reliance on a transport map or coupling in the full state space. Such an object specifies how mass distributed according to the initial law is reassigned to the target law and thereby determines the displacement to be realized by the controller. This construction is natural from the viewpoint of classical optimal transport. It may, however, become demanding when distributions are represented by large sample sets or when the transport object must be recomputed repeatedly within a feedback loop. Entropic regularization and Sinkhorn-type methods have improved computational scalability, while stochastic and sample-based formulations provide further alternatives \cite{Peyre2019computational,benamou2015iterative,dong2025data}. Nevertheless, these approaches still require constructing, approximating, or sampling from a transport object in the ambient space.

Sliced OT offers a different route by replacing the ambient transport problem with a family of one-dimensional problems along linear projections. Since one-dimensional OT admits explicit solutions through cumulative distribution functions and quantiles, the corresponding transport maps and discrepancies can be efficiently evaluated from samples. This construction is closely related to the Radon-transform viewpoint, where a multidimensional object is represented by its collection of linear projections \cite{Deans2007radon}. Sliced Wasserstein distances have been used for barycenter computation and statistical estimation~\cite{bonneel2015sliced,cuturi2014fast}, as well as for image registration, texture synthesis, and color transfer~\cite{Pitie2007automated,Liutkus2019,solomon2015convolutional}. More recently, they have also played an important role in generative modeling~\cite{nguyen2023sliced}.

Sliced-Wasserstein flows further show that projected optimal transport maps can generate continuous distributional evolutions by averaging projected displacements \cite{Liutkus2019,Cozzi2025}. However, a projected transport map specifies only a directional displacement and does not prescribe how this displacement should be realized by a dynamical system over a prescribed finite horizon. This leaves a gap between projection-based transport and finite-horizon distribution steering. To this end, we develop a finite-horizon control framework based on sliced optimal transport by interpreting projected transport maps as directional terminal conditions. At each sampling instant, a one-dimensional transport map specifies a terminal value along a direction, and the minimum-energy control that reaches this value is lifted to the ambient state space. Repeating this construction with randomly selected directions yields a randomized sliced controller, while its spherical average defines the feedback law analyzed in this paper.

We first study the averaged sliced feedback for the single-integrator dynamics. Proposition~\ref{prop:time_derivative} derives the variation of the sliced Wasserstein distance along the continuity equation and identifies the associated discrepancy field. For Gaussian endpoint laws, Proposition~\ref{prop:linearlity_Gaussian} shows that the feedback preserves Gaussianity and reduces the dynamics to the evolution of the mean and covariance. Theorem~\ref{thm:convergence} proves terminal convergence, while Theorem~\ref{thm:sw_gain} provides a law-dependent gain with linear sliced-Wasserstein decay and explicit control-energy bounds. We then consider the implementable randomized sliced controller obtained by sampling projection directions. Theorem~\ref{thm:main} shows that its sampling-time distributions converge to the averaged sliced flow as the sampling period vanishes, establishing the connection between the discrete controller and the continuous feedback. Finally, we extend the construction to linear dynamical systems. For uniformly fully actuated systems, reachability-normalized coordinates allow the sliced velocity field to be realized instantaneously. For general controllable systems, finite-step displacement matching based on local controllability Gramians reproduces the virtual sliced iteration exactly at the sampling instants, as shown in Proposition~\ref{prop:finite_step_exact}.

The conference version of this work \cite{Ito2026sliced} considered Gaussian marginals for the single-integrator dynamics. Herein, we extend the result by treating weak solutions of the continuity equation under weaker regularity assumptions, proving convergence of the randomized controller, and developing realizations for general linear systems.

Projection-based iterative distribution transfer was introduced in \cite{Pitie2007automated} and later connected with sliced-Wasserstein gradient dynamics in \cite{bonnotte2013unidimensional}. These works show that one-dimensional optimal transport along projections can generate distributional evolutions without constructing a full-dimensional transport map. Sliced-Wasserstein flows further developed this idea by studying continuous-time evolutions generated by averaged projected displacements, including their long-time behavior \cite{Cozzi2025} and critical-point structure \cite{Vauthier2025}.

More recently, stochastic slice matching has been studied as a randomized projection-based transport procedure. In \cite{Li2023measure}, almost-sure convergence to the target distribution is established, while \cite{Thurin2026} derives nonasymptotic convergence rates for Gaussian targets using random orthonormal-basis updates. In contrast, we develop a finite-horizon control interpretation of sliced transport, where projected optimal transport maps define directional terminal conditions, and their minimum-energy realizations generate feedback laws for distribution steering. Theorem~\ref{thm:main} establishes convergence of the randomized controller to the averaged sliced feedback, linking sampled sliced updates with continuous-time distribution steering.

The remainder of the paper is organized as follows. Section~\ref{sec:prelim} reviews quadratic and sliced optimal transport and formulates the distribution-steering problem. Section~\ref{sec:single_integrator} develops the sliced controller and its averaged feedback for the single integrator, including the Gaussian case, terminal convergence, and energy characterization. Section~\ref{sec:convergence} proves convergence of the randomized controller to the averaged sliced flow. Section~\ref{sec:full_actuated} extends the construction to uniformly fully actuated linear systems through reachability-normalized coordinates, while Section~\ref{sec:general_controllable} develops the finite-step realization for general controllable systems. Numerical examples illustrate the resulting distributional evolutions.

Short proofs are included in the main text when they clarify the development, while longer arguments are deferred to the appendices so as not to interrupt the flow of our presentation.

\section{Preliminaries and Problem Formulation}\label{sec:prelim}

\subsection{Optimal mass transport}
Let $\calP_2(\bbR^n)$ denote the space of Borel probability measures on $\bbR^n$ with finite second moment. For an absolutely continuous measure, we slightly abuse notation by using the same symbol for the measure and its density when no confusion arises.
For $\mu,\nu\in\calP_2(\bbR^n)$, the Wasserstein-2 distance is defined by
\begin{equation}\label{def:wasserstein}
W_2^2(\mu,\nu):=
\inf_{\gamma\in\Gamma(\mu,\nu)}
\int_{\bbR^n\times\bbR^n}
\|x-y\|^2\,\gamma(\rmd x,\rmd y),
\end{equation}
where $\Gamma(\mu,\nu) :=
\left\{
\gamma
\ \middle|\
\gamma(\,\cdot\,\times\bbR^n)=\mu,
\gamma(\bbR^n\times\,\cdot\,)=\nu
\right\}$.
A coupling specifies how mass distributed according to $\mu$ is transported to $\nu$, while the objective in \eqref{def:wasserstein} records the expected squared displacement \cite{Rachev1998mass,Villani2003}. In one dimension, the Wasserstein distance admits an explicit quantile representation. Let $F_\mu$ and $F_\nu$ denote the cumulative distribution functions of $\mu,\nu\in\calP_2(\bbR)$, and let 
$$F_\mu^{-1}(z)=\inf\{s\in\bbR\mid F_\mu(s)\geq z\}$$
with $z\in(0,1)$, denote the generalized inverse. Then,
\begin{equation}\label{eq:wasserstein_1d}
W_2^2(\mu,\nu)=\int_0^1 \left|F_\mu^{-1}(z)-F_\nu^{-1}(z)\right|^2\rmd z.
\end{equation}
When $\mu$ is atomless, the optimal transport map is the monotone rearrangement $$T_{\mu\to\nu}(s)=F_\nu^{-1}(F_\mu(s)).$$

Problem in \eqref{def:wasserstein} also admits the Benamou--Brenier (dynamical) formulation \cite{Benamou2000}. In density form, it reads
\begin{equation*}
\inf_{\substack{\rho,v\\
\partial_t\rho+\nabla_x\cdot(\rho v)=0,\\
\rho(\cdot,0)=\mu,\ 
\rho(\cdot,1)=\nu}}
\int_0^1\int_{\bbR^n}
\frac12\,\rho(x,t)\|v(x,t)\|^2
\,\rmd x\,\rmd t .
\end{equation*}
The continuity equation describes conservation of mass, while the objective measures the kinetic energy of the density flow.

Equivalently, following individual trajectories gives the Lagrangian formulation
\begin{equation*}
\inf_{\substack{u:\ \dot x=u\\
x(0)\sim\mu,\ x(1)\sim\nu}}
\int_0^1 \mathbb E \! \left[\frac12\|u(t)\|^2\right]\,\rmd t.
\end{equation*}
The Eulerian and Lagrangian formulations describe the same transport process from the density and particle viewpoints, respectively. For linear dynamics $\dot x(t)=A(t)x(t)+B(t)u(t)$, the same viewpoint leads to an optimal transport problem with prior dynamics~\cite{chen2015optimal1,Chen2017optimal,Chen2021optimal}.

\subsection{Sliced Wasserstein distance}

We next recall the projection operator and the associated sliced Wasserstein distance, see also~\cite{Peyre2019computational,bonneel2015sliced, bonnotte2013unidimensional,paulin2020sliced}.

\begin{definition}[\bf Projection]
For $\theta\in\calS^{n-1}:=\{\theta\in\bbR^n\mid\|\theta\|=1\},$ let $P_\theta(x) \defi \theta^\top x$ denote the projection onto the direction $\theta$. For a map $T$, $T_\# \mu$ denotes the pushforward of $\mu\in \calP_2(\bbR^n)$, and the projected measure $\mu_\theta:=(P_\theta)_\#\mu$ is the distribution of $\theta^\top X$ when $X \sim \mu$.
\end{definition}

\begin{definition}[\bf Sliced Wasserstein distance]
Let $\mu,\nu\in\calP_2(\bbR^n)$, and let $\sigma$ denote the uniform probability measure on $\calS^{n-1}$. The sliced Wasserstein-2 distance is defined by
\[
SW_2^2(\mu,\nu):=\int_{\calS^{n-1}}W_2^2(\mu_\theta,\nu_\theta)\,\sigma(\rmd\theta).
\]
\end{definition}

The sliced Wasserstein distance averages the one-dimensional Wasserstein discrepancies of the projected measures. Using the one-dimensional representation in \eqref{eq:wasserstein_1d}, it can equivalently be written as
\begin{equation*}
SW_2^2(\mu,\nu)=\int_{\calS^{n-1}}\int_0^1
\left|
F_{\mu_\theta}^{-1}(z)
-
F_{\nu_\theta}^{-1}(z)
\right|^2
\rmd z\,
\sigma(\rmd\theta).
\end{equation*}
Thus, $SW_2$ reduces the ambient transport problem to a family of one-dimensional transport problems along linear projections.

\subsection{Problem formulation}

Consider the controlled linear dynamics
\begin{equation}\label{eq:dynamics_general}
\dot{x}(t)=A(t)x(t)+B(t)u(t),
\end{equation}
where $A(t)\in\bbR^{n\times n}$, $B(t)\in\bbR^{n\times m}$, and $u(t)\in\bbR^m$. For $t\in[0,1]$, let $\rho_t$ denote the probability law of $x(t)$, with initial condition $x(0)\sim\rho_0$. A law-dependent feedback is called admissible if the corresponding closed-loop dynamics admit a solution on $t \in [0,1)$ and $\rho_t\in\calP_2(\bbR^n)$. We write
\[
\calP_{2,\mathrm{ac}}(\bbR^n):=
\left\{
\rho\in\calP_2(\bbR^n)
\ \middle|\
\rho\ll\mu_L^n
\right\},
\]
where $\mu_L^n$ denotes the Lebesgue measure on $\bbR^n$.

\begin{problem}\label{prob:main}
Given an initial law $\rho_0\in\calP_{2,\mathrm{ac}}(\bbR^n)$ and a target law $\rho_1\in\calP_2(\bbR^n)$, construct an admissible law-dependent feedback for \eqref{eq:dynamics_general} that steers the state law from $\rho_0$ to $\rho_1$ using only one-dimensional optimal transport maps between projections of the current and target laws.
\end{problem}

The key feature of the proposed approach is that the feedback law is built from one-dimensional projected transport maps, without constructing a transport map or coupling in the ambient space $\bbR^n$. The sliced feedback and its convergence analysis are developed for a single integrator in Sections~\ref{sec:single_integrator} and~\ref{sec:convergence}. Its continuous and finite-step realizations for linear systems are developed in Sections~\ref{sec:full_actuated} and
\ref{sec:general_controllable}, respectively.

\section{Sliced OT Control for the Single Integrator}\label{sec:single_integrator}

We begin with the single-integrator dynamics 
\begin{equation*}
\dot{x}(t)=u(t)
\end{equation*}
where $x(t), u(t)\in\bbR^n$, and the initial state has density $\rho_0$. Under a feedback law $u(t)=v(t,x(t))$, let $\rho(t,\cdot)$ denote the density of $x(t)$. We assume that $\rho$ satisfies the continuity equation in the weak sense, i.e.,
\begin{equation}\label{eq:continuity_eq}
\partial_t\rho(t,x)=-\nabla_x\cdot\left(\rho(t,x)v(t,x)\right),
\end{equation}
on $(0,1)\times\bbR^n$, with initial condition $\rho(0,\cdot)=\rho_0$. More precisely, for every compactly supported smooth test function $\zeta\in C_c^\infty((0,1)\times\bbR^n)$,
\begin{equation}\label{eq:dist_continuity}
\int_0^1\!\!\!\int_{\bbR^n}\!\!\!\left(\partial_t\zeta(t,x)+\nabla_x\zeta(t,x)^\top v(t,x)\right)\rho(t,x)\,\rmd x\,\rmd t=0.
\end{equation}

Given a target density $\rho_1$, the objective is to construct a velocity field $v$ that steers $\rho_0$ to $\rho_1$. For comparison, the classical minimum-energy solution is determined by the optimal transport map $\calT:\bbR^n\to\bbR^n$ from $\rho_0$ to $\rho_1$~\cite{Chen2017optimal}. Let $\calT_{0:t}(x)\defi(1-t)x+t\calT(x)$ denote the displacement interpolation. The associated velocity field is $v(t,x)=\calT\circ\calT_{0:t}^{-1}(x)-\calT_{0:t}^{-1}(x)$. Moreover, $W_2^2(\rho_0,\rho_1)=\int_{\bbR^n}\|x-\calT(x)\|^2\rho_0(x)\,\rmd x$. Thus, the classical controller requires an optimal transport map in the full state space. Constructing such a map between continuous distributions is generally difficult in high dimensions. This motivates the sliced formulation developed next.

\subsection{Iterative sliced controller and averaged feedback}\label{subsec:sliced_feedback}

For $\theta\in\calS^{n-1}$, define $\rho_{t,\theta}:=(P_\theta)_\#\rho_t$, and let $\calT_t^\theta:\bbR\to\bbR$ denote the monotone optimal transport map from $\rho_{t,\theta}$ to $\rho_{1,\theta}$. Since $\rho_t$ admits a density, so does $\rho_{t,\theta}$, and $\calT_t^\theta$ is uniquely defined $\rho_{t,\theta}$-almost everywhere. Moreover,
\begin{equation*}
W_2^2\bigl(\rho_{t,\theta},\rho_{1,\theta}\bigr)=\int_{\bbR}\left|s-\calT_t^\theta(s)\right|^2\rho_{t,\theta}(s)\,\rmd s.
\end{equation*}

We first construct a controller that uses one projection direction at each sampling instant. Let $t_k\defi kh$, for $k\in\{0,1,\ldots,N\}$, where $h>0$ and $N\defi 1/h$ is assumed to be an integer. The control is held constant over each interval $[t_k,t_{k+1})$, so that the single-integrator dynamics becomes
$$x_{k+1}=x_k+h u_k.$$

At time $t_k$, select a direction $\theta_k\in\calS^{n-1}$ and let $\calT_k^{\theta_k}:=\calT_{t_k}^{\theta_k}$. The projected state $s_k\defi \theta_k^\top x_k$ evolves according to
\begin{equation*}
s_{\ell+1}=s_\ell+h u_{\theta_k,\ell},
\end{equation*}
where $u_{\theta_k,\ell}$ is the scalar control along $\theta_k$.  To steer $s_k$ to its transported image $\calT_k^{\theta_k}(s_k)$ at the terminal time, consider
\begin{equation}
\min_{\{u_{\theta_k,\ell}\}}\sum_{\ell=k}^{N-1}
h\,u_{\theta_k,\ell}^2 \quad \text{\rm s.t.}\quad s_{\ell+1}=s_\ell+h u_{\theta_k,\ell}
\end{equation}
with $s_k=\theta_k^\top x_k$ and $s_N=\calT_k^{\theta_k}(\theta_k^\top x_k)$. The minimum-energy control is constant over the remaining time steps $\ell\in\{k,\ldots,N-1\}$ and is given by
\begin{equation*}
u_{\theta_k,\ell}^*=-\frac{1}{1-t_k}\! \left(
\theta_k^\top x_k
-
\calT_k^{\theta_k}(\theta_k^\top x_k)
\right).
\end{equation*}

In a receding-horizon implementation, only the first control value is applied. Lifting it from the projected coordinate to $\bbR^n$ gives
\begin{equation}\label{eq:slice_controller}
u_k=-\frac{\theta_k^\top x_k-\calT_k^{\theta_k}(\theta_k^\top x_k)}{1-t_k}
\theta_k.
\end{equation}
At the next sampling instant, the projection direction and the one-dimensional transport map are updated. Repeating this procedure gives the iterative sliced controller.

For the analysis below, we allow a general nonnegative gain $\lambda:[0,1)\to[0,\infty)$ and write the directional controller as
\begin{equation}\label{eq:iterative_sliced_controller}
u_k=-\lambda(t_k)\! \left(\theta_k^\top x_k-\calT_k^{\theta_k}(\theta_k^\top x_k)\right)\theta_k.
\end{equation}
The choice $\lambda(t)=(1-t)^{-1}$ recovers \eqref{eq:slice_controller}. More general choices will be useful in the analysis of the averaged flow and its control energy.

Averaging the directional update in \eqref{eq:iterative_sliced_controller} over $\theta\sim \sigma$ gives the velocity field
\begin{equation}\label{eq:optimal_controller_single_integ}
v(t,x)=-\lambda(t)g_t(x),
\end{equation}
where
\begin{equation}\label{eq:sliced_discrepancy}
g_t(x)
:=
\int_{\calS^{n-1}}
\left(
\theta^\top x
-
\calT_t^\theta(\theta^\top x)
\right)\theta
\,\sigma(\rmd\theta).
\end{equation}
We refer to $g_t$ as the sliced discrepancy field and to \eqref{eq:optimal_controller_single_integ} as the averaged sliced
feedback. The iterative controller \eqref{eq:iterative_sliced_controller} is its randomized single-direction realization.

\begin{remark}
The gain in \eqref{eq:slice_controller} is the reciprocal of the remaining time and therefore increases as $t_k$ approaches the
terminal time. Alternatively, the projected endpoint problem may be solved over a fixed receding horizon $\tau>0$. The corresponding control is
\begin{equation*}
u_k
=
-\frac{1}{\tau}
\! \left(
\theta_k^\top x_k
-
\calT_k^{\theta_k}(\theta_k^\top x_k)
\right)\theta_k,
\end{equation*}
and the state update becomes
\begin{equation*}
x_{k+1}
=
x_k
-
\frac{h}{\tau}
\! \left(
\theta_k^\top x_k
-
\calT_k^{\theta_k}(\theta_k^\top x_k)
\right)\theta_k.
\end{equation*}
Thus, $h/\tau$ acts as a step size of a directional descent iteration. Applying several mutually orthogonal directions in
succession gives a procedure closely related to iterative distribution transfer \cite{Pitie2007automated}. Related combinations of optimal transport and model predictive control for discrete distributions appear in \cite{Ito2023entropic,Ito2023lcss}.
\end{remark}

\subsection{Averaged sliced flow and descent of the sliced Wasserstein distance}\label{subsec:descent}

The averaged sliced feedback is related to the variation of the sliced Wasserstein distance along the continuity equation. In this subsection, let $\rho_t(\rmd x)=\rho(t,x)\rmd x$ be a narrowly continuous curve in $\calP_2(\bbR^n)$ satisfying \eqref{eq:continuity_eq} weakly. We assume that the kinetic energy of $v$ is finite on every compact subinterval of $[0,1)$, namely, 
$$\int_0^{\bar t}\int_{\bbR^n}\|v(t,x)\|^2\,\rho_t(\rmd x)\,\rmd t<\infty$$ 
for $\bar{t}\in (0,1)$.

\begin{proposition}
\label{prop:time_derivative}
The function $r(t):=SW_2(\rho_t,\rho_1)$ is locally absolutely continuous on $[0,1)$. Moreover, for almost every $t\in(0,1)$, we have $\frac{\rmd}{\rmd t}SW_2^2(\rho_t,\rho_1)=2\int_{\bbR^n}g_t(x)^\top v(t,x)\,\rho_t(\rmd x)$.
\end{proposition}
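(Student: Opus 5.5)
The plan is to reduce to one dimension slice by slice and then integrate over the sphere.

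\emph{Projected continuity equations.} For fixed $\theta\in\calS^{n-1}$, the projected curve $\rho_{t,\theta}=(P_\theta)_\#\rho_t$ solves a one-dimensional continuity equation with velocity
\[
v_\theta(t,s)=\bbE_{\rho_t}\!\left[\theta^\top v(t,X)\mid \theta^\top X=s\right],
\]
the conditional expectation (disintegration) of $\theta^\top v$ along the fibres of $P_\theta$. To see this, take a test function of the form $\zeta(t,x)=\phi(t,\theta^\top x)$ in \eqref{eq:dist_continuity}, after a routine approximation, since $\phi(t,\theta^\top\cdot)$ is not compactly supported in $x$. The local finite-energy assumption and Jensen give
\[
\int_0^{\bar t}\!\int_{\bbR}|v_\theta|^2\rho_{t,\theta}\,\rmd s\,\rmd t\le \int_0^{\bar t}\!\int_{\bbR^n}\|v\|^2\rho_t\,\rmd x\,\rmd t<\infty ,
\]
so $t\mapsto\rho_{t,\theta}$ is a locally absolutely continuous curve in $(\calP_2(\bbR),W_2)$ with metric speed at most $\|v_\theta(t,\cdot)\|_{L^2(\rho_{t,\theta})}$.

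\emph{Derivative on each slice.} Invoke the standard differentiation formula for the squared Wasserstein distance to a fixed measure along an absolutely continuous curve (Ambrosio--Gigli--Savar\'e, Thm.~8.4.7/Prop.~8.5.6). For a.e.\ $t$,
\[
\frac{\rmd}{\rmd t}\tfrac12 W_2^2(\rho_{t,\theta},\rho_{1,\theta})=\int_{\bbR}\bigl(s-\calT_t^\theta(s)\bigr)v_\theta(t,s)\,\rho_{t,\theta}(\rmd s).
\]
Here the optimal plan is induced by the map $\calT^\theta_t$, because $\rho_{t,\theta}$ has a density. The formula also holds with $v_\theta$ replaced by any velocity whose projection yields the curve. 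Undoing the conditional expectation rewrites the right-hand side as
\[
\int_{\bbR^n}\bigl(\theta^\top x-\calT_t^\theta(\theta^\top x)\bigr)\,\theta^\top v(t,x)\,\rho_t(\rmd x).
\]
The main obstacle is that the exceptional null set of times depends on $\theta$. To avoid it, work with the integrated form
\[
W_2^2(\rho_{b,\theta},\rho_{1,\theta})-W_2^2(\rho_{a,\theta},\rho_{1,\theta})=2\int_a^b(\cdots)\,\rmd t ,
\]
which holds for every $\theta$ by absolute continuity of each slice.

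\emph{Integration over the sphere.} Integrate the integrated form over $\theta\sim\sigma$ and apply Fubini. This needs joint measurability in $(t,\theta,x)$ of $\calT^\theta_t(\theta^\top x)$, which follows from its quantile representation, together with an integrable bound. For the bound, Cauchy--Schwarz gives
\[
\bigl|(\cdots)\bigr|\le W_2(\rho_{t,\theta},\rho_{1,\theta})\,\|v(t,\cdot)\|_{L^2(\rho_t)},
\]
and $W_2(\rho_{t,\theta},\rho_{1,\theta})$ is bounded on $[0,\bar t]$ by the second moments, which stay bounded along the curve by the energy bound. Exchanging the $\theta$- and $x$-integrals then produces $g_t$ from \eqref{eq:sliced_discrepancy}, so that
\[
SW_2^2(\rho_b,\rho_1)-SW_2^2(\rho_a,\rho_1)=2\int_a^b\!\int_{\bbR^n} g_t(x)^\top v(t,x)\,\rho_t(\rmd x)\,\rmd t .
\]
The integrand is in $L^1_{\mathrm{loc}}[0,1)$, so $SW_2^2(\rho_t,\rho_1)$ is locally absolutely continuous, and the Lebesgue differentiation theorem gives the stated derivative a.e.

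\emph{Absolute continuity of $r$ itself.} Use the bound $|r(b)-r(a)|\le SW_2(\rho_a,\rho_b)\le W_2(\rho_a,\rho_b)\le\int_a^b\|v\|_{L^2(\rho_t)}\,\rmd t$, which follows from the triangle inequality for $SW_2$. This directly shows that $r$ is locally absolutely continuous.
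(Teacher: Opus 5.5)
Your proposal is correct and follows essentially the same route as the paper. Both arguments project the continuity equation with test functions $\varphi(t,\theta^\top x)$ and the conditional-expectation velocity, apply the Ambrosio--Gigli--Savar\'e derivative formula on each slice, average over $\theta$ by Fubini with a Cauchy--Schwarz majorant, and get absolute continuity of $r$ from $SW_2\le W_2$ together with the metric-speed bound. Your integrated-form treatment of the $\theta$-dependent null sets and your remark on joint measurability make explicit what the paper compresses into ``Fubini's theorem and the fundamental theorem of calculus.''
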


\vsp
\begin{proof}
See Appendix~\ref{app:time_derivative}.
\end{proof}

For averaged sliced feedback $v(t,x)=-\lambda(t)g_t(x)$, define
\begin{equation}\label{eq:D-def}
D(t):=\int_{\bbR^n}\|g_t(x)\|^2\,\rho_t(\rmd x).
\end{equation}
Proposition~\ref{prop:time_derivative} then gives
\begin{equation}\label{eq:dissipation_general}
\frac{\rmd}{\rmd t}
SW_2^2(\rho_t,\rho_1)=-2\lambda(t)D(t)\leq 0
\end{equation}
for almost every $t\in(0,1)$. Hence, the sliced Wasserstein distance is non-increasing along the closed-loop flow.

\begin{remark}[\bf Directional consistency]\label{rmk:chi}
Let $a_t^\theta(x):=\theta^\top x-\calT_t^\theta(\theta^\top x)$ denote the sliced displacement at $x$ along $\theta$. Then
\begin{align*}
r(t)^2&=\int_{\bbR^n}\int_{\calS^{n-1}}|a_t^\theta(x)|^2\,\sigma(\rmd\theta)\,\rho_t(\rmd x),\\
D(t)&=\int_{\bbR^n}\Big\|\int_{\calS^{n-1}}a_t^\theta(x)\theta\,\sigma(\rmd\theta)\Big\|^2\rho_t(\rmd x).
\end{align*}
Thus, $r(t)^2$ measures the total sliced discrepancy, whereas $D(t)$ measures the part retained after the lifted directional displacements are averaged over the sphere. By the Cauchy--Schwarz inequality and $\int_{\calS^{n-1}}\theta\theta^\top\sigma(\rmd\theta) = I/n$,
\begin{equation}\label{eq:chi-upper}
D(t)\leq\frac{1}{n}r(t)^2.
\end{equation}
For $r(t)>0$, define
\begin{equation}\label{eq:r-chi-def}
\chi(t)
:=
\frac{n}{r(t)^2}D(t),
\end{equation}
and set $\chi(t)=1$ when $r(t)=0$. Then $0\leq\chi(t)\leq1$. Values of $\chi(t)$ near one indicate that the directional displacements reinforce one another, while small values indicate substantial cancellation.
\end{remark}

\subsection{Gaussian marginals and affine feedback}\label{subsec:gaussian_affine}
We now take $\rho_i=\calN(\,\cdot\mid m_i,\Sigma_i)$, $i=0,1$, where $\calN(\,\cdot\mid m,\Sigma)$ denotes the Gaussian density with mean $m$ and covariance $\Sigma$, and $\Sigma_i\succ0$. Since linear projections preserve Gaussianity, $\rho_{i,\theta}= \calN(\,\cdot\mid\theta^\top m_i,\theta^\top\Sigma_i\theta)$ for $i=0,1$. For $\Sigma\succ0$, define
\begin{equation*}
\alpha(\Sigma,\theta)
:=
\sqrt{
\frac{\theta^\top\Sigma_1\theta}
{\theta^\top\Sigma\theta}
}.
\end{equation*}
The optimal transport map between one-dimensional Gaussian distributions
is affine \cite[Remark~2.31]{Peyre2019computational}. Hence, the map from
$\rho_{0,\theta}$ to $\rho_{1,\theta}$ is
\begin{equation*}
\calT_0^\theta(\theta^\top x)=\theta^\top m_1+\alpha(\Sigma_0,\theta)\left(\theta^\top x-\theta^\top m_0\right).
\end{equation*}

Introduce the matrix-valued functions
\begin{align*}
H(\Sigma)
&:=
\int_{\calS^{n-1}}
\alpha(\Sigma,\theta)\theta\theta^\top
\,\sigma(\rmd\theta)
-
\frac{1}{n}I,\\
K(t,\Sigma)
&:=
\lambda(t)H(\Sigma).
\end{align*}
Using the spherical identity $\int_{\calS^{n-1}}\theta\theta^\top\sigma(\rmd\theta)=I/n$, the averaged sliced feedback at the initial time becomes
\begin{equation*}
\begin{aligned}
v(0,x)=K(0,\Sigma_0)x-\lambda(0) \Big[\big(H(\Sigma_0)+\frac{1}{n}I\big)m_0-\frac{1}{n}m_1\Big].
\end{aligned}
\end{equation*}
Thus, the initial feedback is affine in the state. The following proposition shows that this affine structure is preserved by the averaged sliced flow.

\begin{proposition}[\bf Gaussian preservation and affine feedback]
\label{prop:linearlity_Gaussian}
For continuous $\lambda:[0,1)\to[0,\infty)$, the covariance equation
\begin{equation}\label{eq:cov_eq}
\dot{\Sigma}(t)=K(t,\Sigma(t))\Sigma(t)+\Sigma(t)K(t,\Sigma(t))^\top
\end{equation}
with $\Sigma(0)=\Sigma_0$ admits a unique solution satisfying $\Sigma(t)\succ0$ on $[0,1)$. Define
\begin{equation}\label{eq:m_solution}
m(t) \defi m_1+\exp \Big(-\frac{1}{n}
\int_0^t\lambda(s)\,\rmd s\Big)(m_0-m_1)
\end{equation}
and
\begin{equation*}
\eta(t)\defi-\lambda(t)\Big[\big(H(\Sigma(t))+\frac{1}{n}I\big)m(t)-\frac{1}{n}m_1\Big].
\end{equation*}
With initial law $X(0)\sim\calN(m_0,\Sigma_0)$, the affine system
\begin{equation}\label{eq:X_affine}
\dot{X}(t)
=
K(t,\Sigma(t))X(t)+\eta(t)
\end{equation}
admits a unique solution on $[0,1)$ and satisfies $X(t)\sim\calN\bigl(m(t),\Sigma(t)\bigr)$. Moreover, if $g_t$ is computed from $\rho_t=\calN(\,\cdot\mid m(t),\Sigma(t))$ and the target density $\rho_1$, then
\begin{equation*}
-\lambda(t)g_t(x)
=
K(t,\Sigma(t))x+\eta(t).
\end{equation*}
Hence, the averaged sliced feedback is affine and preserves Gaussianity.
\end{proposition}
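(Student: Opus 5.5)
Three steps: well-posedness and positivity of the covariance ODE \eqref{eq:cov_eq}; a linear-SDE-free analysis of the affine system \eqref{eq:X_affine}; and identification of $-\lambda g_t$ with the affine field when $\rho_t$ is Gaussian.

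\emph{Step 1 (covariance ODE).} On the open cone of positive definite matrices, $\Sigma\mapsto\alpha(\Sigma,\theta)=\sqrt{\theta^\top\Sigma_1\theta/\theta^\top\Sigma\theta}$ is smooth, and its derivatives are bounded uniformly in $\theta\in\calS^{n-1}$ on any set $\{cI\preceq\Sigma\preceq CI\}$. Hence $H$, and therefore the right-hand side of \eqref{eq:cov_eq}, is locally Lipschitz in $\Sigma$ and continuous in $t$. Picard--Lindel\"of then gives a unique maximal solution.

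To show that this solution exists on all of $[0,1)$ with $\Sigma(t)\succ0$, I would rule out two things on every $[0,\bar t]$ with $\bar t<1$: loss of positive definiteness and blow-up.
\begin{itemize}
\item \emph{Positivity.} Write $\Sigma(t)=\Phi(t)\Sigma_0\Phi(t)^\top$, where $\dot\Phi=K(t,\Sigma(t))\Phi$ and $\Phi(0)=I$. Since $\Phi$ is invertible as long as $K$ stays finite, congruence preserves $\Sigma\succ0$.
\item \emph{No blow-up.} Use the bounds $H(\Sigma)\succeq-\tfrac1nI$ and $\|H(\Sigma)\|\le \tfrac1n\sqrt{\lambda_{\max}(\Sigma_1)/\lambda_{\min}(\Sigma)}$. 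These show that $\Sigma$ cannot escape to infinity: $\tfrac{\rmd}{\rmd t}\lambda_{\max}$ is controlled, since along the top eigenvector $\alpha$ is bounded by $\sqrt{\lambda_{\max}(\Sigma_1)/\lambda_{\min}(\Sigma)}$.
\end{itemize}

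The cleanest route for a lower bound on $\lambda_{\min}$: let $e$ be a unit eigenvector for $\lambda_{\min}(\Sigma)$. Then $e^\top\dot\Sigma e=2\lambda_{\min}\,e^\top H e\,\lambda(t)$. The quantity $\alpha(\Sigma,\theta)\ge\sqrt{\lambda_{\min}(\Sigma_1)/\lambda_{\max}(\Sigma)}$ is bounded below, so $e^\top He\ge -\tfrac1n$, giving
\[
\frac{\rmd}{\rmd t}\lambda_{\min}\ge -\frac{2}{n}\lambda(t)\lambda_{\min}.
\]
Hence $\lambda_{\min}(t)\ge\lambda_{\min}(\Sigma_0)e^{-\frac2n\int_0^t\lambda}>0$ on $[0,\bar t]$. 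Symmetrically, with the upper bound on $\alpha$ expressed through $\lambda_{\min}$, we get $\tfrac{\rmd}{\rmd t}\lambda_{\max}\le c(t)\lambda_{\max}$, a linear growth bound. Both extremal eigenvalues are Lipschitz, so these hold in the Dini-derivative sense. This Gr\"onwall argument on the extremal eigenvalues is the main technical obstacle; everything else is algebraic.

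\emph{Step 2 (affine system).} With $\Sigma(\cdot)$ fixed and continuous, \eqref{eq:X_affine} is a linear ODE with continuous coefficients, so it has a unique solution $X(t)=\Phi(t)X(0)+\int_0^t\Phi(t)\Phi(s)^{-1}\eta(s)\rmd s$. This is an affine image of a Gaussian, hence Gaussian.

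\begin{itemize}
\item \emph{Covariance.} Its covariance is $\Phi\Sigma_0\Phi^\top$, which by uniqueness in Step 1 equals $\Sigma(t)$.
\item \emph{Mean.} Its mean $\bar m$ solves $\dot{\bar m}=K\bar m+\eta$. Substituting $\eta$ gives
\[
\dot{\bar m}=\lambda H(\bar m-m)-\tfrac{\lambda}{n}(m-m_1),
\]
using $K=\lambda H$. Differentiating \eqref{eq:m_solution} gives $\dot m=-\tfrac{\lambda}{n}(m-m_1)$, so $m$ is a solution with $m(0)=m_0$. Uniqueness yields $\bar m=m$.
\end{itemize}

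\emph{Step 3 (feedback identity).} For $\rho_t=\calN(m(t),\Sigma(t))$ the projected map is
\[
\calT_t^\theta(s)=\theta^\top m_1+\alpha(\Sigma(t),\theta)(s-\theta^\top m(t)).
\]
Plugging this into \eqref{eq:sliced_discrepancy} and using $\int\theta\theta^\top\sigma(\rmd\theta)=I/n$ gives
\[
g_t(x)=\tfrac1n x-\int\alpha\,\theta\theta^\top\sigma(\rmd\theta)\,(x-m)-\tfrac1n m_1=-H(\Sigma)x+\big(H(\Sigma)+\tfrac1nI\big)m-\tfrac1nm_1 .
\]
Multiplying by $-\lambda(t)$ yields $K(t,\Sigma(t))x+\eta(t)$, matching the initial-time computation displayed before the proposition. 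Hence the velocity of \eqref{eq:X_affine} is exactly the averaged sliced feedback evaluated on the current Gaussian law, which gives Gaussian preservation.
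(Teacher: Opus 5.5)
Your proposal is correct and follows essentially the same route as the paper's proof. Both arguments use local Lipschitz continuity of $H$ and a maximal solution, the Gronwall lower bound $\lambda_{\min}(\Sigma(t))\geq\lambda_{\min}(\Sigma_0)\exp(-\frac{2}{n}\int_0^t\lambda(s)\,\mathrm{d}s)$ from $H(\Sigma)\succeq-\frac{1}{n}I$ along an eigenvector, a growth bound excluding blow-up before $t=1$, variation of constants for the affine system, and direct substitution of the affine one-dimensional Gaussian maps into $g_t$. Your $\lambda_{\max}$-eigenvector estimate plays the role of the paper's operator-norm Gronwall step. Your mean identification, via uniqueness for $\dot{\bar m}=\lambda H(\bar m-m)-\frac{\lambda}{n}(m-m_1)$, is more careful than the paper's.

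One harmless slip: the bound $\|H(\Sigma)\|\leq\frac{1}{n}\sqrt{\lambda_{\max}(\Sigma_1)/\lambda_{\min}(\Sigma)}$ fails, e.g., for $\Sigma_1=I$, $\Sigma=9I$, where $H=-\frac{2}{3n}I$. It should read $\|H(\Sigma)\|\leq\frac{1}{n}\max\{1,\sqrt{\lambda_{\max}(\Sigma_1)/\lambda_{\min}(\Sigma)}\}$. Your argument only uses the one-sided bound $e^\top H(\Sigma)e\leq\frac{1}{n}\sup_\theta\alpha(\Sigma,\theta)$, which is valid, so nothing breaks.
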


\begin{proof}
See Appendix~\ref{app:linearity_Gaussian}.
\end{proof}

\subsection{Terminal convergence and energy characterization}
\label{subsec:terminal_energy}

Proposition~\ref{prop:linearlity_Gaussian} shows that the averaged sliced feedback preserves Gaussianity. We first establish conditions under which this Gaussian flow reaches the prescribed terminal law. Since a Gaussian law is determined by its mean and covariance, it is enough to study the limits of $m(t)$ and $\Sigma(t)$.

\begin{theorem}[\bf Terminal convergence]\label{thm:convergence}
Denote the Gaussian flow in Proposition~\ref{prop:linearlity_Gaussian} as $\rho_t=\calN(\,\cdot\mid m(t),\Sigma(t))$. Suppose that
\begin{equation}\label{eq:cumulative-gain}
\int_0^1\lambda(t)\,\rmd t=\infty
\end{equation}
and that $\Sigma(t)\succeq\alpha I$ on $[0,1)$ for some $\alpha>0$. Set
\begin{equation*}
\beta:=n \Big(\sqrt{\frac{\trace(\Sigma_1)}{n}}+r(0)\Big)^2 \text{\rm and } \chi_*:=\min\! \big\{1,\frac{2\alpha\sqrt{\lambda_{\min}(\Sigma_1)}}{\beta^{3/2}(n+2)}\big\},
\end{equation*}
then we have 
\begin{equation}\label{eq:r-decay-general}
r(t)\leq r(0)\exp\! \Big(-\frac{\chi_*}{n}\int_0^t\lambda(s)\,\rmd s\Big).
\end{equation}
Consequently, $m(t)\to m_1$ and $\Sigma(t)\to\Sigma_1$ as $t\nearrow1$.
\end{theorem}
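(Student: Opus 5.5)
The plan is to combine the dissipation identity \eqref{eq:dissipation_general} with a uniform lower bound $\chi(t)\ge\chi_*$ on the directional-consistency ratio of Remark~\ref{rmk:chi}. Since $D(t)=\chi(t)r(t)^2/n$, \eqref{eq:dissipation_general} reads
\[
\frac{\rmd}{\rmd t}r(t)^2=-\frac{2\lambda(t)\chi(t)}{n}\,r(t)^2 .
\]
A lower bound $\chi\ge\chi_*$ together with Gr\"onwall's inequality then gives \eqref{eq:r-decay-general}. The identity holds a.e., and $r^2$ is locally absolutely continuous by Proposition~\ref{prop:time_derivative}. Condition \eqref{eq:cumulative-gain} then forces $r(t)\to0$. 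Independently, \eqref{eq:m_solution} gives $m(t)\to m_1$ directly from \eqref{eq:cumulative-gain}.

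The core step is the lower bound on $\chi$ in the Gaussian setting, using Proposition~\ref{prop:linearlity_Gaussian}. The ingredients are as follows.

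\begin{itemize}
\item \emph{Explicit form of the slices.} With $\rho_t=\calN(m,\Sigma)$ we have $a^\theta_t(x)=\theta^\top(x-m)(1-\alpha(\Sigma,\theta))+\theta^\top(m-m_1)$.
\item \emph{Lower bound on $D$.} Write $g_t(x)=M(x-m)+(m-m_1)/n$ with $M=\frac1nI-\int\alpha\,\theta\theta^\top\sigma(\rmd\theta)=-H(\Sigma)$. Then
\[
D=\trace(M\Sigma M)+\|m-m_1\|^2/n^2\ \ge\ \alpha\|M\|_F^2+\|m-m_1\|^2/n^2 .
\]
\item \emph{Upper bound on $r^2$.} We have $r^2=\int[\theta^\top\Sigma\theta(1-\alpha)^2+(\theta^\top(m-m_1))^2]\,\sigma(\rmd\theta)$. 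The mean parts already compare with ratio exactly $1/n$ after multiplying by $n$, so it remains to control the covariance part.
\item \emph{Covariance part.} Write $1-\alpha=(\theta^\top\Sigma\theta-\theta^\top\Sigma_1\theta)/\bigl(\sqrt{\theta^\top\Sigma\theta}(\sqrt{\theta^\top\Sigma\theta}+\sqrt{\theta^\top\Sigma_1\theta})\bigr)$. This bounds $\int\theta^\top\Sigma\theta(1-\alpha)^2$ by a constant times $\int(\theta^\top(\Sigma-\Sigma_1)\theta)^2$, with the constant involving $\lambda_{\min}(\Sigma_1)^{-1}$.
\item \emph{Relating the quartic form to $M$.} Use the fourth-moment identity $\int(\theta^\top S\theta)^2\sigma=\frac{\trace(S)^2+2\trace(S^2)}{n(n+2)}$, which produces the factor $n+2$. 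Also $M$ can be written as $\int\frac{\theta^\top(\Sigma-\Sigma_1)\theta}{\sqrt{\theta^\top\Sigma\theta}(\sqrt{\theta^\top\Sigma\theta}+\sqrt{\theta^\top\Sigma_1\theta})}\theta\theta^\top\sigma$. Testing $M$ against $\Sigma-\Sigma_1$ via the Frobenius inner product gives $\langle M,\Sigma-\Sigma_1\rangle\ge\int(\theta^\top(\Sigma-\Sigma_1)\theta)^2/(c\,\beta)$. Cauchy--Schwarz then gives $\|M\|_F\ge(\ldots)/\|\Sigma-\Sigma_1\|_F$.
\item \emph{Where $\beta$ comes from.} The quantity $\beta$ uniformly bounds $\trace\Sigma(t)$. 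Indeed, $\sqrt{\trace\Sigma/n}\le\sqrt{\trace\Sigma_1/n}+SW$-type bound, and $r$ is non-increasing by \eqref{eq:dissipation_general}. This gives $\theta^\top\Sigma\theta\le\beta$ and $\|\Sigma-\Sigma_1\|_F\lesssim\beta$.
\end{itemize}

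The denominators $\sqrt{\theta^\top\Sigma\theta}\,(\cdots)$ are bounded above by $\beta$-powers and below by $\alpha$ and $\lambda_{\min}(\Sigma_1)$. Collecting these factors should yield $\chi\ge 2\alpha\sqrt{\lambda_{\min}(\Sigma_1)}/(\beta^{3/2}(n+2))$, capped at $1$.

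I expect the main obstacle to be this covariance comparison. It means getting a lower bound on $\|M\|_F^2$ in terms of $\int\theta^\top\Sigma\theta(1-\alpha)^2$ with exactly these constants. The route is the Frobenius pairing of $M$ with $\Sigma-\Sigma_1$, the quartic spherical moment, and the trace bound from monotonicity of $r$. If the constants do not match exactly, any positive $\chi_*$ depending only on $\alpha,\beta,\Sigma_1,n$ suffices for the qualitative conclusion.

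For the final step, $r(t)\to0$ together with $m(t)\to m_1$ gives $W_2(\rho_{t,\theta},\rho_{1,\theta})\to0$ for a.e.\ $\theta$ along subsequences. For Gaussians this means $\theta^\top\Sigma(t)\theta\to\theta^\top\Sigma_1\theta$. Boundedness of $\Sigma(t)$ and continuity in $\theta$ upgrade this to all $\theta$, and polarization yields $\Sigma(t)\to\Sigma_1$.
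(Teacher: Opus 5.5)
Your overall architecture matches the paper's: rewrite \eqref{eq:dissipation_general} as $\frac{\rmd}{\rmd t}r^2=-\frac{2\lambda\chi}{n}r^2$, use $nD=a+n\trace(M\Sigma M)$ and $r^2=a+V(\Sigma)$ with $a=\|m-m_1\|^2/n$, bound $\trace\Sigma(t)\le\beta$ via monotonicity of $r$, and close with Gr\"onwall. Reading $m(t)\to m_1$ off \eqref{eq:m_solution} and your compactness argument for $\Sigma(t)\to\Sigma_1$ are also fine. The gap is where you suspected it: \eqref{eq:r-decay-general} is a quantitative claim with a specific $\chi_*$, and your covariance comparison does not produce it. Write $s=\theta^\top\Sigma\theta$, $s_1=\theta^\top\Sigma_1\theta$, $\Delta=\Sigma-\Sigma_1$ and $Q=\int(\theta^\top\Delta\theta)^2\sigma(\rmd\theta)$. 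Your ingredients are then $V\le Q/\lambda_{\min}(\Sigma_1)$, $\langle M,\Delta\rangle\ge Q/(2\beta)$ (since $\sqrt s(\sqrt s+\sqrt{s_1})\le 2\beta$), and $\|\Delta\|_{\rm F}^2\le\frac{n(n+2)}{2}Q$. Cauchy--Schwarz then gives
\[
\|M\|_{\rm F}^2\ge\frac{Q^2}{4\beta^2\|\Delta\|_{\rm F}^2}\ge\frac{\lambda_{\min}(\Sigma_1)}{2\beta^2n(n+2)}\,V,
\qquad
\chi\ge\frac{\alpha\lambda_{\min}(\Sigma_1)}{2\beta^2(n+2)} .
\]
This has the wrong exponents ($\lambda_{\min}(\Sigma_1)/\beta^2$ instead of $\sqrt{\lambda_{\min}(\Sigma_1)}/\beta^{3/2}$). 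It is smaller than the stated $\chi_*$ by the factor $\sqrt{\lambda_{\min}(\Sigma_1)}/(4\sqrt\beta)\le 1/(4\sqrt n)$, because $\beta\ge\trace\Sigma_1\ge n\lambda_{\min}(\Sigma_1)$. So your argument gives terminal convergence and a decay estimate with a slower rate, but not \eqref{eq:r-decay-general} as stated. The claim that collecting factors ``should yield'' the stated constant is incorrect.

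The missing idea is that $M=-H(\Sigma)=\nabla V(\Sigma)$ and that $V$ is strongly convex. Its second directional derivative is $\int\frac{\sqrt{s_1}}{2s^{3/2}}(\theta^\top\Delta\theta)^2\,\sigma(\rmd\theta)$. Consider the convex set $\{0\prec\Sigma\preceq\beta I\}$, which contains $\Sigma_1$ and, by your trace bound, every $\Sigma(t)$. On it the weight is at least $\sqrt{\lambda_{\min}(\Sigma_1)}/(2\beta^{3/2})$, so with the quartic identity $V$ is $\mu$-strongly convex with $\mu=\sqrt{\lambda_{\min}(\Sigma_1)}/(\beta^{3/2}n(n+2))$. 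Since $V(\Sigma_1)=0$ is the minimum, complete the square in $V(\Sigma_1)\ge V(\Sigma)+\langle M,\Sigma_1-\Sigma\rangle+\frac{\mu}{2}\|\Sigma_1-\Sigma\|_{\rm F}^2$. This gives the Polyak--{\L}ojasiewicz bound $\|M\|_{\rm F}^2\ge 2\mu V$, hence $n\trace(M\Sigma M)\ge 2n\alpha\mu V=\frac{2\alpha\sqrt{\lambda_{\min}(\Sigma_1)}}{\beta^{3/2}(n+2)}V$, which is exactly $\chi_*$. A Frobenius pairing does not recover this even with the sharper inputs $\langle M,\Delta\rangle\ge\mu\|\Delta\|_{\rm F}^2$ (strong monotonicity) and $\langle M,\Delta\rangle\ge V$ (convexity): it only gives $\|M\|_{\rm F}^2\ge\mu V$, and the completing-the-square step supplies the missing factor $2$. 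The same strong convexity also gives $\frac{\mu}{2}\|\Sigma-\Sigma_1\|_{\rm F}^2\le V\le r^2$, which is how the paper obtains $\Sigma(t)\to\Sigma_1$ with a rate.
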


\begin{proof}
See Appendix~\ref{app:convergence}.
\end{proof}

The same affine feedback also governs the first two moments of any law with finite second moment. Thus, under the conditions of Theorem~\ref{thm:convergence}, the feedback law $u(t,x)=K(t,\Sigma(t))x+\eta(t)$ steers the mean and covariance from $(m_0,\Sigma_0)$ to $(m_1,\Sigma_1)$, even when the state law is not Gaussian.

We next characterize the control effort. The following results apply to general averaged sliced flows and do not require Gaussianity. Let $\rho_t$ be generated by $v(t,x)=-\lambda(t)g_t(x)$ and satisfy the assumptions of Proposition~\ref{prop:time_derivative} on every compact subinterval of $[0,1)$. For $X(t)\sim\rho_t$, write $u(t)=v(t,X(t))$ and, for $0<\tau<1$, define
\begin{equation*}
\calE_u(0,\tau)
:=
\int_0^\tau
\bbE\! \left[\|u(t)\|^2\right]
\,\rmd t.
\end{equation*}

\begin{lemma}[\bf Control energy along the sliced flow]\label{lem:energy_characterization}
Suppose that $D(t)>0$ whenever $r(t)>0$. Then, for almost every
$t\in(0,1)$,
\begin{equation}\label{eq:power-chi}
\bbE\! \left[\|u(t)\|^2\right]
=
\frac{n\dot r(t)^2}{\chi(t)}.
\end{equation}
Consequently,
\begin{equation}\label{eq:energy-chi}
\calE_u(0,\tau)
=
n\int_0^\tau
\frac{\dot r(t)^2}{\chi(t)}
\,\rmd t.
\end{equation}
If $r(t)\to0$ as $t\nearrow1$, then
\begin{equation}\label{eq:energy-lower}
\calE_u(0,1)
\geq
nSW_2^2(\rho_0,\rho_1),
\end{equation}
where
$\calE_u(0,1):=\lim_{\tau\nearrow1}\calE_u(0,\tau)$.
\end{lemma}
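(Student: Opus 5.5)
The plan is to combine the dissipation identity \eqref{eq:dissipation_general} with the definition of $\chi$ in \eqref{eq:r-chi-def}. Since $u(t)=-\lambda(t)g_t(X(t))$, we have
\[
\bbE\|u(t)\|^2=\lambda(t)^2D(t).
\]
By Proposition~\ref{prop:time_derivative}, $r$ is locally absolutely continuous on $[0,1)$, so $r^2$ is as well, and for a.e.\ $t$
\[
\frac{\rmd}{\rmd t}r(t)^2=2r(t)\dot r(t)=-2\lambda(t)D(t).
\]

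\textbf{Case $r(t)>0$.} Here $\chi(t)=nD(t)/r(t)^2$, and by hypothesis $D(t)>0$. The derivative identity gives $\lambda(t)=-r(t)\dot r(t)/D(t)$. Therefore
\[
\lambda(t)^2D(t)=\frac{r(t)^2\dot r(t)^2}{D(t)}=\frac{n\dot r(t)^2}{\chi(t)},
\]
which is \eqref{eq:power-chi}.

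\textbf{Case $r(t)=0$.} Then $\chi(t)=1$, and \eqref{eq:chi-upper} gives $D(t)=0$, so $\bbE\|u(t)\|^2=0$. It remains to show that $\dot r(t)=0$ for a.e.\ such $t$. On the set $\{t:r(t)=0\}$, at every point where $r$ is differentiable and which is an accumulation point of the set, the derivative vanishes. Isolated points of the set are countable, hence null. Since $r$ is nonnegative, the derivative is zero at every interior zero where it exists in any case. So \eqref{eq:power-chi} holds a.e. Integrating over $[0,\tau]$ gives \eqref{eq:energy-chi}.

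\textbf{Lower bound \eqref{eq:energy-lower}.} Since $0\le\chi\le1$, \eqref{eq:energy-chi} gives
\[
\calE_u(0,\tau)\ge n\int_0^\tau\dot r^2\,\rmd t.
\]
By Cauchy--Schwarz and absolute continuity,
\[
\int_0^\tau\dot r^2\,\rmd t\ \ge\ \frac{1}{\tau}\Big(\int_0^\tau\dot r\,\rmd t\Big)^2=\frac{(r(0)-r(\tau))^2}{\tau}\ \ge\ (r(0)-r(\tau))^2 .
\]
The integrand is nonnegative, so $\calE_u(0,\tau)$ is monotone in $\tau$ and the limit defining $\calE_u(0,1)$ exists in $[0,\infty]$. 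Letting $\tau\nearrow1$ and using $r(\tau)\to0$ yields
\[
\calE_u(0,1)\ge n\,r(0)^2=nSW_2^2(\rho_0,\rho_1).
\]

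\textbf{Expected obstacles.} The main one is the measure-theoretic bookkeeping: making sure that \eqref{eq:dissipation_general} and the chain rule $\tfrac{\rmd}{\rmd t}r^2=2r\dot r$ hold on a common full-measure set, and handling the degenerate set $\{r=0\}$. I expect the latter to be resolved by the a.e.\ differentiability of absolutely continuous functions together with nonnegativity of $r$. A secondary point is justifying that $\calE_u(0,\tau)<\infty$, which follows from the assumed finite kinetic energy on compact subintervals.
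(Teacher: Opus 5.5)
Your proposal is correct and follows essentially the same route as the paper: $\bbE\|u(t)\|^2=\lambda(t)^2D(t)$, elimination of $\lambda$ via the dissipation identity where $r(t)>0$, and Cauchy--Schwarz on $\int_0^\tau\dot r^2\,\rmd t$ for the lower bound. The only minor difference is on $\{r=0\}$. You argue that any zero of the nonnegative function $r$ is a minimum, so $\dot r=0$ wherever it exists. The paper instead uses monotonicity of $r^2$ to show that $r$ stays zero after first hitting zero. Both arguments are valid.
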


\begin{proof}
See Appendix~\ref{app:energy_lem}.
\end{proof}

Lemma~\ref{lem:energy_characterization} separates the rate at which the sliced discrepancy decreases from the loss caused by averaging over projection directions. For a fixed decay rate, a smaller value of $\chi(t)$ requires greater control effort.

Combining the dissipation identity in \eqref{eq:dissipation_general} with \eqref{eq:r-chi-def} gives $$\dot r(t)=-\frac{\lambda(t)\chi(t)}{n}r(t)$$ whenever $r(t)>0$. To obtain linear decay over the remaining horizon, define
\begin{equation}\label{eq:lambda-sw}
\lambda_{\rm SW}(t)
:=
\frac{r(t)^2}{D(t)(1-t)}
=
\frac{n}{\chi(t)(1-t)},
\qquad r(t)>0 .
\end{equation}
Here, $(1-t)^{-1}$ accounts for the remaining time, while $n/\chi(t)$ compensates for directional cancellation.

\begin{theorem}[\bf Linear sliced decay and energy bounds]\label{thm:sw_gain}
Suppose that the assumptions of Lemma~\ref{lem:energy_characterization} hold for $\lambda(t)=\lambda_{\rm SW}(t)$ and that $r(0)>0$. Then, for every $t\in[0,1)$,
\begin{equation}\label{eq:r-linear}
r(t)
=
(1-t)r(0).
\end{equation}
The corresponding total control energy is
\begin{equation}\label{eq:energy-sw}
\calE_u(0,1)
=
nSW_2^2(\rho_0,\rho_1)
\int_0^1
\frac{1}{\chi(t)}
\,\rmd t.
\end{equation}
If $\chi(t)\geq\underline{\chi}>0$ for almost every $t\in(0,1)$, then
\begin{equation}\label{eq:energy-sw-bounds}
nSW_2^2(\rho_0,\rho_1)
\leq
\calE_u(0,1)
\leq
\frac{n}{\underline{\chi}}
SW_2^2(\rho_0,\rho_1).
\end{equation}
\end{theorem}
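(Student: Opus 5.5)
The argument rests on the dissipation identity \eqref{eq:dissipation_general} together with Lemma~\ref{lem:energy_characterization}. We work with $r(t)^2$ rather than $r(t)$, since Proposition~\ref{prop:time_derivative} gives local absolute continuity and the derivative formula for $SW_2^2(\rho_t,\rho_1)$.

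\textbf{Step 1: linear decay \eqref{eq:r-linear}.} On any interval where $r>0$, substitute $\lambda=\lambda_{\rm SW}=r^2/(D(1-t))$ into \eqref{eq:dissipation_general}. This gives
\[
\frac{\rmd}{\rmd t}r(t)^2=-\frac{2r(t)^2}{1-t}
\]
for almost every $t$. Because $r^2$ is locally absolutely continuous and positive there, $\log r^2$ is locally absolutely continuous with derivative $-2/(1-t)$. Integrating yields $r(t)^2=(1-t)^2r(0)^2$, that is, $r(t)=(1-t)r(0)$.

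It remains to check that $r$ cannot vanish before time $1$. Let $t^*$ be the first zero of $r$ in $[0,1)$, if one exists. On $[0,t^*)$ the formula above holds, so continuity of $r$ gives $r(t^*)=(1-t^*)r(0)>0$, a contradiction. This continuation argument is the only delicate point in Step 1. It requires the gain to be well defined wherever $r>0$, and the hypothesis $D>0$ whenever $r>0$ supplies exactly this.

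\textbf{Step 2: energy identity \eqref{eq:energy-sw}.} From Step 1 we have $\dot r(t)=-r(0)$ for almost every $t$. Lemma~\ref{lem:energy_characterization}, through \eqref{eq:energy-chi}, then gives
\[
\calE_u(0,\tau)=n\,r(0)^2\int_0^\tau\frac{\rmd t}{\chi(t)}.
\]
The integrand $1/\chi$ is nonnegative (and may be $+\infty$ only on a null set, or else both sides are infinite). By monotone convergence as $\tau\nearrow1$, and using $r(0)^2=SW_2^2(\rho_0,\rho_1)$, we obtain \eqref{eq:energy-sw}.

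\textbf{Step 3: bounds \eqref{eq:energy-sw-bounds}.} Since $0\le\chi\le1$ by Remark~\ref{rmk:chi}, we have $1/\chi\ge1$, and \eqref{eq:energy-sw} gives the lower bound. The lower bound also follows from \eqref{eq:energy-lower}, because $r(t)\to0$ by Step 1. The assumption $\chi\ge\underline\chi$ gives $1/\chi\le1/\underline\chi$, and integrating over $[0,1]$ yields the upper bound.
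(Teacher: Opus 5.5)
Your proposal is correct and follows the paper's proof: the same first-zero continuation argument shows that $r$ cannot vanish on $[0,1)$, and the same substitution of $\dot r=-r(0)$ into Lemma~\ref{lem:energy_characterization} yields the energy identity and the bounds from $\underline{\chi}\le\chi\le 1$. The differences are cosmetic. You integrate $\log r^2$, whereas the paper shows that $r(t)/(1-t)$ has zero derivative, and you make the limit $\tau\nearrow1$ explicit through monotone convergence.
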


\begin{proof}
See Appendix~\ref{app:energy}.
\end{proof}

For Gaussian flows, the averaged sliced field cannot vanish before the target law is reached. The gain in \eqref{eq:lambda-sw} is therefore well defined whenever $r(t)>0$.
\begin{corollary}[\bf Gaussian flow]
\label{cor:Gaussian_SW_normalized}
Assume that the mean and covariance equations of Proposition~\ref{prop:linearlity_Gaussian} with $\lambda(t)=\lambda_{\mathrm{SW}}(t)$ admit a solution $(m(t),\Sigma(t)) \in \mathbb{R}^n\times\mathbb{S}_{>0}^n$, $t\in[0,1)$ and that the curve $\rho_t=\mathcal{N}(\cdot\mid m(t),\Sigma(t))$ satisfies the assumptions of Proposition~\ref{prop:time_derivative}. Then $D(t)>0$ whenever $r(t)>0$ and
\begin{align}
\|m(t)-m_1\|
&\leq
\sqrt{n}(1-t)r(0),
\label{eq:m_bound}\\
\|\Sigma(t)-\Sigma_1\|_{\rm F}
&\leq
\sqrt{\frac{2}{\mu}}\,
(1-t)r(0),
\label{eq:Sigma_bound}
\end{align}
where $\mu>0$ is defined in \eqref{eq:mu_def}. Moreover, $\Sigma(t)\succeq\alpha I$ on $[0,1)$ for some $\alpha>0$, and
\begin{equation*}
nSW_2^2(\rho_0,\rho_1)
\leq
\calE_u(0,1)
\leq
\frac{n}{\chi_*}
SW_2^2(\rho_0,\rho_1),
\end{equation*}
where $\chi_*$ is defined as in Theorem~\ref{thm:convergence} using this value of $\alpha$.
\end{corollary}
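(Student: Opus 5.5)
The plan has four steps: show $D(t)>0$ whenever $r(t)>0$ by a convexity argument, read off the mean and covariance bounds from the linear decay of $r$, get a uniform eigenvalue floor for $\Sigma(t)$, and reuse the directional-consistency estimate from Theorem~\ref{thm:convergence} for the energy bounds.

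\emph{Step 1: $D(t)>0$ when $r(t)>0$.} By Proposition~\ref{prop:linearlity_Gaussian}, $g_t$ is affine. A Gaussian with $\Sigma(t)\succ0$ has full support, so $D(t)=0$ forces $g_t\equiv0$ on $\bbR^n$. Consider the functional $\Phi(m,\Sigma):=SW_2^2(\calN(m,\Sigma),\rho_1)$ on $\bbR^n\times\mathbb{S}^n_{>0}$. Explicitly,
\[
\Phi(m,\Sigma)=\int_{\calS^{n-1}}\Big[(\theta^\top(m-m_1))^2+\theta^\top\Sigma\theta+\theta^\top\Sigma_1\theta-2\sqrt{\theta^\top\Sigma\theta\,\theta^\top\Sigma_1\theta}\Big]\sigma(\rmd\theta).
\]
Each term is convex in $(m,\Sigma)$, since $\Sigma\mapsto\sqrt{\theta^\top\Sigma\theta}$ is concave, so $\Phi$ is convex. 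Proposition~\ref{prop:time_derivative}, applied along the Gaussian curve generated by an arbitrary affine velocity field, identifies the directional derivative of $\Phi$ with $2\int g_t^\top v\,\rmd\rho_t$. If $g_t\equiv0$, every directional derivative of $\Phi$ at $(m(t),\Sigma(t))$ vanishes. By convexity, this point is a global minimizer, so $\Phi=0$ there, i.e.\ $r(t)=0$. This is the contrapositive of the claim.

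\emph{Step 2: mean and covariance bounds.} The hypotheses of Lemma~\ref{lem:energy_characterization} now hold, so Theorem~\ref{thm:sw_gain} gives $r(t)=(1-t)r(0)$. Using $\int\theta\theta^\top\sigma(\rmd\theta)=I/n$, the mean term in $\Phi$ equals $\|m-m_1\|^2/n$, hence $\|m(t)-m_1\|^2/n\le r(t)^2$, which is \eqref{eq:m_bound}. For \eqref{eq:Sigma_bound}, I need the covariance part of $\Phi$ to satisfy
\[
\int_{\calS^{n-1}}\big(\sqrt{\theta^\top\Sigma\theta}-\sqrt{\theta^\top\Sigma_1\theta}\big)^2\sigma(\rmd\theta)\;\ge\;\frac{\mu}{2}\|\Sigma-\Sigma_1\|_{\rm F}^2
\]
on the set of covariances reachable from the flow. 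That set is bounded, since $\trace\Sigma(t)\le\beta$ by the same triangle-inequality estimate used to define $\beta$. This is a local quadratic growth bound, with $\mu$ the constant of \eqref{eq:mu_def}. I would prove it via
\[
(\sqrt a-\sqrt b)^2=\frac{(a-b)^2}{(\sqrt a+\sqrt b)^2}\ge\frac{(\theta^\top(\Sigma-\Sigma_1)\theta)^2}{2\big(\|\Sigma\|+\|\Sigma_1\|\big)},
\]
combined with the identity $\int(\theta^\top M\theta)^2\sigma(\rmd\theta)=\bigl(\trace(M)^2+2\|M\|_{\rm F}^2\bigr)/\bigl(n(n+2)\bigr)\ge 2\|M\|_{\rm F}^2/\bigl(n(n+2)\bigr)$.

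\emph{Step 3: uniform eigenvalue floor.} By Step 2, $\Sigma(t)\to\Sigma_1\succ0$ as $t\nearrow1$, and $\Sigma$ is continuous and positive definite on $[0,1)$. Hence $\lambda_{\min}(\Sigma(t))$ extends to a positive continuous function on $[0,1]$ and attains a positive minimum $\alpha$.

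\emph{Step 4: energy bounds.} With this $\alpha$, I would reuse the directional-consistency estimate inside the proof of Theorem~\ref{thm:convergence}, namely $\chi(t)\ge\chi_*$. That estimate depends only on $\Sigma(t)\succeq\alpha I$ and the bound $\beta$, not on the particular gain, so it applies here. Inserting $\underline\chi=\chi_*$ into \eqref{eq:energy-sw-bounds} gives the energy bounds.

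\emph{Main obstacle.} The delicate point is Step 1. Justifying that Proposition~\ref{prop:time_derivative} yields genuine directional derivatives of $\Phi$ in all affine directions needs care, and if that becomes awkward I would instead differentiate the explicit formula for $\Phi$ directly. The second difficulty is matching the strong-convexity constant to the paper's $\mu$.
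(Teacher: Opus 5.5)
Your proposal is correct and follows the paper's proof essentially step for step. The paper likewise uses convexity, via the explicit formula for $D(t)$ and $\nabla V=-H$, to exclude $D(t)=0$ while $r(t)>0$; Theorem~\ref{thm:sw_gain} with $r(t)^2=\|m(t)-m_1\|^2/n+V(\Sigma(t))$ for the moment bounds; continuous extension of $\Sigma$ to $t=1$ for $\alpha$; and Lemma~\ref{lem:chi_gaussian} for $\chi(t)\ge\chi_*$. Neither of your worries is an obstacle: the paper differentiates the explicit formula directly (your fallback) and cites the quadratic growth bound \eqref{eq:V-strong-lower}, while your elementary constant also works, since $1/\bigl(n(n+2)(\|\Sigma\|+\|\Sigma_1\|)\bigr)\ge 1/\bigl(2\beta n(n+2)\bigr)\ge\mu/2$ by $\|\Sigma\|,\|\Sigma_1\|\le\beta$ and $\lambda_{\min}(\Sigma_1)\le\beta$.
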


\begin{proof}
See Appendix~\ref{app:cor_gauss}.
\end{proof}

\section{Convergence of the Iterative Sliced Controller}\label{sec:convergence}

We now show that the iterative sliced controller approaches the averaged sliced flow as the sampling period tends to zero. To isolate the approximation due to random slicing, we restrict attention to the single-integrator dynamics.

For a current distribution $\rho$ and a direction $\theta\in\calS^{n-1}$, write
\begin{align*}
g_\theta[\rho](x)
&:=
\left(
\theta^\top x-\calT_\rho^\theta(\theta^\top x)
\right)\theta,\\
\bar g[\rho](x)
&:=
\int_{\calS^{n-1}}
g_\theta[\rho](x)\,\sigma(\rmd\theta),
\end{align*}
where $\calT_\rho^\theta$ is the one-dimensional optimal transport map from $\rho_\theta$ to $\rho_{1,\theta}$. Thus, $g_t=\bar g[\rho_t]$ when $\rho=\rho_t$.

Fix $\bar t\in(0,1)$ and assume that $\lambda$ is continuously differentiable on $[0,\bar t]$. Set $\Lambda_{\bar t}:=\max_{0\leq t\leq\bar t}\lambda(t)$. Let $x_0\sim \rho_0$ be defined on a probability space $(\Omega_0,\calF_0,\bbP_0)$, write $\bbE_0$ for expectation, and set $\|X\|_{L^2(\Omega_0)}^2 := \bbE_0[\|X\|^2]$. On a separate probability space, let $\{\theta_k\}_{k\geq0}$ be an i.i.d. sequence with common distribution $\sigma$, independent of $x_0$. For a sampling period $h>0$, set $t_k\defi kh$ and $K_h\defi \lfloor\bar t/h\rfloor$.

Let $\Theta_k$ denote the sigma-algebra generated by $\{\theta_0,\ldots,\theta_{k-1}\}$, representing the direction history before time $t_k$. We write $\rho_k^h:=\operatorname{Law}(x_k^h\mid\Theta_k)$ for the conditional distribution of $x_k^h$, leaving its dependence on $\Theta_k$ implicit. The iterative sliced controller generates
\begin{equation}
x_{k+1}^h
=
x_k^h
-
h\lambda(t_k)
g_{\theta_k}[\rho_k^h](x_k^h),
\label{eq:random-scheme}
\end{equation}
with $x_0^h=x_0$. Equivalently, for each realization of the direction sequence,
\begin{equation}
\rho_{k+1}^h
=
\left(
\operatorname{Id}
-
h\lambda(t_k)
g_{\theta_k}[\rho_k^h]
\right)_\#
\rho_k^h,
\label{eq:law-update_iter}
\end{equation}
where $\operatorname{Id}$ denotes the identity map.

For the averaged flow, we make the following assumption.
\begin{assumption}[\bf Existence]
\label{ass:averaged-flow}
There exists a $W_2$-continuous curve $\{\rho_t\}_{t\in [0,\bar{t}]} \subset \calP_{2,{\rm ac}} (\bbR^n) $ and a Borel measurable map $\Phi_t :\bbR^n\to\bbR^n$ such that $\Phi_0=\operatorname{Id}$ and, for every $x\in\bbR^n$, the curve $t\mapsto\Phi_t(x)$ is absolutely continuous on $ [0,\bar{t}] $ and satisfies, for almost every $t\in(0,\bar t)$ and for $\rho_0$-almost every $x$, $$\frac{\rmd}{\rmd t}\Phi_t(x)= -\lambda(t)\bar g[\rho_t](\Phi_t(x))$$ with $\rho_t=(\Phi_t)_\#\rho_0$.
\end{assumption}

Assumption~\ref{ass:averaged-flow} fixes a reference averaged flow in Lagrangian form. It is a well-posedness assumption for the comparison flow.
Set $y(0)=x(0)$ and define $y(t):=\Phi_t(x(0))$. Then, $y(t)$ has law $\rho_t$ and satisfies
\begin{equation}
\frac{\rmd}{\rmd t}y(t)
=
-\lambda(t)\bar g[\rho_t](y(t)).
\label{eq:ideal-flow}
\end{equation}
Our objective is to compare $\rho_k^h$ with $\rho_{t_k}$ as $h\searrow0$.

For $\mu\in\calP_2(\bbR^n)$, write $$m_2(\mu):=(\int_{\bbR^n}\|x\|^2\,\rmd\mu(x))^{1/2}.$$ The moment estimates established below show that the iterative and averaged distributions remain in the class
\begin{equation*}
\calK_{\bar t}
:=
\left\{
\rho\in\calP_{2,\mathrm{ac}}(\bbR^n)
\ \middle|\
m_2(\rho)
\leq
\bigl(m_2(\rho_0)+m_2(\rho_1)\bigr)
\Ee^{\Lambda_{\bar t}\bar t}
\right\}.
\end{equation*}

To compare the two evolutions, we require stability along the conditional laws generated by the iterative sliced controller and along the averaged sliced flow.

\begin{assumption}[\bf Stability along the generated flows]\label{ass:main}
There exist constants $h_{\bar t}\in(0,1]$ and $L_{\bar t}<\infty$ such that $h_{\bar t}\Lambda_{\bar t}<1$ and
\begin{enumerate}
\item for any $h\in(0,h_{\bar t}]$, $k\in\{0,\ldots,K_h\}$, and almost every realization of the direction history,
\begin{equation*}
\left\|
\bar g[\rho_k^{h}](x_k^h)
-
\bar g[\rho_{t_k}](y(t_k))
\right\|_{L^2(\Omega_0)}
\le
L_{\bar t}
\left\|x_k^h-y(t_k)\right\|_{L^2(\Omega_0)}.
\end{equation*}
\item for any $s,t\in[0,\bar t]$,
\begin{equation*}
\left\|
\bar g[\rho_s](y(s))
-
\bar g[\rho_t](y(t))
\right\|_{L^2(\Omega_0)}\le
L_{\bar t}
\left\|y(s)-y(t)\right\|_{L^2(\Omega_0)}.
\end{equation*}
\end{enumerate}
\end{assumption}
\vsp

We are now ready to state the convergence result. The expectation below is taken with respect to the sampled directions.

\begin{theorem}[\bf Vanishing-step convergence]\label{thm:main}
Suppose that Assumptions~\ref{ass:averaged-flow} and \ref{ass:main} hold. Then, there exists a constant $C_{\bar t}<\infty$, independent of $h$, such that, for every $h\in(0,h_{\bar t}]$,
\begin{align}
\bbE_{\{\theta_k\}}
\! \left[
\max_{0\leq k\leq K_h}
W_2^2\! \left(
\rho_k^h,\rho_{t_k}
\right)
\right]
&\leq
C_{\bar t}h,
\label{eq:main-W2}\\
\bbE_{\{\theta_k\}}
\! \left[
\max_{0\leq k\leq K_h}
SW_2^2\! \left(
\rho_k^h,\rho_{t_k}
\right)
\right]
&\leq
\frac{C_{\bar t}}{n}h.
\label{eq:main-SW2}
\end{align}
For a fixed $t\in[0,\bar t]$, let $k_h\defi \lfloor t/h\rfloor$. 
Then, we have
$$\lim_{h\searrow0}\bbE_{\{\theta_k\}}\! \left[W_2^2\! \left(\rho_{k_h}^h,\rho_t\right)\right]=0.$$
\end{theorem}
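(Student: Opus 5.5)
The plan is a synchronous coupling argument. We drive the iterative scheme \eqref{eq:random-scheme} and the averaged flow \eqref{eq:ideal-flow} from the same initial point $x_0$, and we bound the error $e_k:=x_k^h-y(t_k)$ in $L^2(\Omega_0)$ conditionally on the direction history $\Theta_k$. Since $x_k^h$ has conditional law $\rho_k^h$ and $y(t_k)$ has law $\rho_{t_k}$, the pair gives a coupling. Hence
\[
W_2^2(\rho_k^h,\rho_{t_k})\le \|e_k\|_{L^2(\Omega_0)}^2=:E_k .
\]
The sliced bound \eqref{eq:main-SW2} then follows from \eqref{eq:main-W2} and the elementary inequality $SW_2^2\le W_2^2/n$. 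To prove that inequality, project an optimal coupling: for each $\theta$, $W_2^2(\mu_\theta,\nu_\theta)\le\int|\theta^\top(x-y)|^2\,\rmd\gamma$, and averaging over $\theta$ with $\int\theta\theta^\top\sigma(\rmd\theta)=I/n$ gives the claim.

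The first step is a set of moment bounds. The monotone map satisfies $\int|\calT^\theta_\rho(s)|^2\rho_\theta(\rmd s)=m_2(\rho_{1,\theta})^2$. This gives $\|g_\theta[\rho](x)\|_{L^2(\rho)}\le m_2(\rho)+m_2(\rho_1)$, and the same bound holds for $\bar g$. A discrete Gronwall argument, using $h\Lambda_{\bar t}<1$, then keeps $\rho_k^h$ and $\rho_t$ in $\calK_{\bar t}$. Combined with Assumption~\ref{ass:main}(2), this shows that $t\mapsto y(t)$ is Lipschitz in $L^2(\Omega_0)$ with constant $\Lambda_{\bar t}M$, where $M$ is the moment bound.

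The second step is the error recursion. Write
\[
e_{k+1}=e_k-h\lambda(t_k)\bigl(\bar g[\rho_k^h](x_k^h)-\bar g[\rho_{t_k}](y(t_k))\bigr)-h\lambda(t_k)\xi_k-R_k .
\]
Here $\xi_k:=g_{\theta_k}[\rho_k^h](x_k^h)-\bar g[\rho_k^h](x_k^h)$ is the slicing noise, and
\[
R_k:=\int_{t_k}^{t_{k+1}}\Bigl(\lambda(s)\bar g[\rho_s](y(s))-\lambda(t_k)\bar g[\rho_{t_k}](y(t_k))\Bigr)\rmd s
\]
is the consistency error. Assumption~\ref{ass:main}(2), the $C^1$ regularity of $\lambda$, and the Lipschitz bound on $y$ give $\|R_k\|_{L^2(\Omega_0)}\le Ch^2$.

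The key observation concerns $\xi_k$. Conditionally on $\Theta_k$ and $x_0$, both $x_k^h$ and $\rho_k^h$ are fixed, while $\theta_k$ is independent of them. Therefore $\bbE_{\theta_k}[\xi_k\mid\Theta_k,x_0]=0$ and $\bbE\|\xi_k\|^2\le 4M^2$. Expanding the square $E_{k+1}$, the cross terms between $\xi_k$ and the $\Theta_k$-measurable quantities vanish in $\bbE_{\{\theta_k\}}$. Using Assumption~\ref{ass:main}(1) and Young's inequality on the $R_k$ cross term, we expect
\[
\bbE E_{k+1}\le(1+Ch)\,\bbE E_k+Ch^2\Lambda_{\bar t}^2M^2+Ch^3 .
\]
Discrete Gronwall over $K_h\le\bar t/h$ steps then gives $\bbE E_k\le C h$.

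The main obstacle is controlling $\max_k$ inside the expectation. For this we use the martingale $M_k:=\sum_{j<k}h\lambda(t_j)\langle e_j,\xi_j\rangle_{L^2(\Omega_0)}$, which is a martingale in the filtration $\Theta_k$. We sum the pathwise identity for $E_k$ rather than its expectation, so the drift terms are bounded by $Ch\sum_j E_j+Ch$ pathwise. Taking the maximum and applying Doob's $L^2$ (or Burkholder) inequality gives
\[
\bbE\max_k|M_k|\le C\Bigl(\sum_j h^2\,\bbE E_j\Bigr)^{1/2},
\]
which is $O(h)$ by the previous bound. Gronwall applied to $\max_{j\le k}E_j$ then yields \eqref{eq:main-W2}.

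Finally, for fixed $t$, take $k_h=\lfloor t/h\rfloor$. The triangle inequality gives
\[
W_2(\rho^h_{k_h},\rho_t)\le W_2(\rho^h_{k_h},\rho_{t_{k_h}})+\|y(t_{k_h})-y(t)\|_{L^2(\Omega_0)}.
\]
The second term is $O(h)$ by the Lipschitz bound on $y$, and the first is controlled by \eqref{eq:main-W2}, so the expectation tends to zero.
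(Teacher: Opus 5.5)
Your proof is correct. Its skeleton is the same as the paper's: synchronous coupling, moment confinement in $\calK_{\bar t}$, and splitting each update into averaged drift, centered slicing noise and an $O(h^2)$ consistency residual, followed by the transfer to $W_2$ and $SW_2$ and the fixed-time triangle argument. Where you differ is in how the error is controlled.

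The paper sums the \emph{linear} error recursion, $e_j=h\sum_{i<j}\bigl(b(t_i,x_i^h)-b(t_i,y(t_i))\bigr)+h\sum_{i<j}\xi_{i+1}-\sum_{i<j}r_i$. It squares this using Cauchy--Schwarz on the drift sum. It then applies Doob's $L^2$ inequality to the $L^2(\Omega_0)$-valued martingale $h\sum_i\xi_{i+1}$, whose second moment is $O(h)$ from the conditional variance bound alone. A single discrete Gronwall step on $a_k=\bbE[\max_{j\le k}\|e_j\|^2]$ then yields \eqref{eq:main-W2}. This is packaged as Proposition~\ref{prop:hilbert-euler}.

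You instead expand $\|e_{k+1}\|^2$, an energy-type estimate, and this forces two passes. First you need the non-maximal bound $\bbE E_k\le Ch$. Only then can you control the quadratic variation of the scalar martingale $\sum_j h\lambda(t_j)\langle e_j,\xi_j\rangle$ and handle the maximum.

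The trade-off is as follows.
\begin{itemize}
\item The paper's route is shorter, because its martingale does not involve the error at all.
\item Yours uses the orthogonality of the noise to the $\Theta_k$-measurable part at each step. The Lipschitz constant then enters only through a one-step factor $1+O(\Lambda_{\bar t}L_{\bar t}h)$, giving an exponent linear rather than quadratic in $L_{\bar t}\bar t$. It would also let one exploit a one-sided or dissipative structure of the drift, if one were available.
\end{itemize}

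Three small points:
\begin{itemize}
\item With your definition of $R_k$, the recursion should read $+R_k$. This is harmless, since only $\|R_k\|$ enters.
\item Your pathwise bound on $h^2\lambda^2\|\xi_j\|^2$ is valid, because Lemma~\ref{lem:slice-id} bounds $\|g_\theta[\rho]\|_{L_\rho^2}$ for \emph{every} $\theta$, not just on average.
\item The condition $h\Lambda_{\bar t}<1$ is not needed for the moment bound. It is needed, via Lemma~\ref{lem:ac-preservation}, to keep $\rho_k^h$ absolutely continuous. That is what guarantees the monotone maps $\calT^\theta_{\rho_k^h}$ exist and push $\rho^h_{k,\theta}$ forward to $\rho_{1,\theta}$, which your identity $\int|\calT^\theta_\rho(s)|^2\,\rho_\theta(\mathrm{d}s)=m_2(\rho_{1,\theta})^2$ silently uses. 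You should state this step explicitly.
\end{itemize}
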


Thus, the iterative controller converges to the averaged sliced flow with an $O(h)$ expected squared Wasserstein error.  The proof proceeds through uniform moment estimates, a decomposition of each randomized update into its averaged drift and a centered fluctuation, and a discrete stability argument. We establish these ingredients next and return to the proof of Theorem~\ref{thm:main} thereafter.

\subsection{Estimates for the convergence proof}

Throughout this subsection, we retain the notation and assumptions introduced above. In particular, the analysis is carried out on $[0,\bar t]$, where $\lambda\in C^1([0,\bar t])$. We also set
$$
\Lambda'_{\bar t}:=\max_{0\leq t\leq\bar t}|\dot\lambda(t)|.
$$

The randomized sliced iteration can be viewed as a stochastic Euler approximation of the averaged flow. The random projection direction produces a centered fluctuation, while time discretization introduces a local Euler residual. We first record an abstract estimate for these two effects and then show that it applies to the sliced dynamics.

\begin{proposition}[\bf Stochastic Euler estimate]\label{prop:hilbert-euler}
Let $\calH$ be a Hilbert space and $b:[0,\bar t]\times\calH\to\calH$. Suppose that $z_0^h=z(0)\in\calH$ and
\begin{align*}
z_{k+1}^h
&=
z_k^h+h b(t_k,z_k^h)+h\xi_{k+1},\\
z(t_{k+1})
&=
z(t_k)+h b(t_k,z(t_k))+r_k .
\end{align*}
Assume that $\|r_k\|_{\calH}\leq Bh^2$ and, for every $k\in\{0,\ldots,K_h\}$,
\begin{equation}
\|b(t_k,z_k^h)-b(t_k,z(t_k))\|_\calH
\le L\|z_k^h-z(t_k)\|_\calH.
\label{eq:trajectory-stability-euler}
\end{equation}
Moreover, let $\{\xi_{k+1}\}$ be an $\calH$-valued martingale difference sequence with respect to $\{\mathcal F_k\}$ satisfying $\mathbb E[\xi_{k+1}\mid\mathcal F_k]=0$ and $\mathbb E[\|\xi_{k+1}\|_{\calH}^2\mid\mathcal F_k]\leq\sigma_\xi^2$. Then there exists $C<\infty$, independent of $h$, such that
\begin{equation}\label{eq:H-estimate}
\mathbb E\big[
\max_{0\leq k\leq K_h}
\|z_k^h-z(t_k)\|_{\calH}^2
\big] \leq Ch,
\end{equation}
for every $h\in(0,1]$.
\end{proposition}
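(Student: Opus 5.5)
Set $e_k := z_k^h - z(t_k)$ and subtract the two recursions:
\begin{equation*}
e_{k+1} = e_k + h\bigl(b(t_k,z_k^h)-b(t_k,z(t_k))\bigr) + h\xi_{k+1} - r_k,
\qquad e_0=0 .
\end{equation*}
Unrolling from $e_0=0$ gives, for $k\le K_h+1$,
\begin{equation*}
e_k = \sum_{j<k} h\,\delta_j + M_k - R_k,
\end{equation*}
where $\delta_j := b(t_j,z_j^h)-b(t_j,z(t_j))$, $M_k:=h\sum_{j<k}\xi_{j+1}$ and $R_k:=\sum_{j<k} r_j$. The plan is a discrete Gronwall argument on $E_k:=\mathbb E[\max_{i\le k}\|e_i\|_\calH^2]$, treating the three contributions separately.

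\textbf{Bounding the three pieces.} For the residual, $\|R_k\|_\calH\le K_h B h^2\le B\bar t h$, so it contributes at most $B^2\bar t^2h^2\le B^2\bar t^2 h$.

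For the martingale part, $M_k$ is an $\calH$-valued martingale with respect to $\{\calF_k\}$. Orthogonality of the increments in Hilbert space gives $\mathbb E\|M_k\|^2=h^2\sum_{j<k}\mathbb E\|\xi_{j+1}\|^2\le h^2(K_h+1)\sigma_\xi^2\le(\bar t+1)\sigma_\xi^2 h$. Doob's $L^2$ maximal inequality, applied to the real submartingale $\|M_k\|_\calH$, then yields $\mathbb E\max_{k}\|M_k\|^2\le 4(\bar t+1)\sigma_\xi^2 h$. I expect this to be the only genuinely stochastic step. The points to check are that $e_k$ (hence $z_k^h$) is $\calF_k$-measurable, so that the recursion is adapted, and that the maximal inequality applies in Hilbert space; the latter holds because $\|M_k\|$ is a nonnegative submartingale by conditional Jensen.

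For the drift, condition \eqref{eq:trajectory-stability-euler} gives $\|\delta_j\|\le L\|e_j\|$. Cauchy--Schwarz then yields
\begin{equation*}
\Bigl\|\sum_{j<k}h\delta_j\Bigr\|^2\le (kh)\,h\sum_{j<k}L^2\|e_j\|^2\le(\bar t+1)L^2h\sum_{j<k}\max_{i\le j}\|e_i\|^2 .
\end{equation*}

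\textbf{Closing the argument.} Combining the three bounds with $(a+b+c)^2\le3(a^2+b^2+c^2)$ gives
\begin{equation*}
E_k\le 3(\bar t+1)L^2 h\sum_{j<k}E_j + C_0 h,
\qquad C_0:=3\bigl(4(\bar t+1)\sigma_\xi^2+B^2\bar t^2\bigr).
\end{equation*}
The discrete Gronwall lemma then gives
\begin{equation*}
E_{K_h}\le C_0 h\,(1+3(\bar t+1)L^2h)^{K_h}\le C_0\,\Ee^{3(\bar t+1)^2L^2}h,
\end{equation*}
using $K_h h\le\bar t$ and $h\le 1$. This is \eqref{eq:H-estimate} with a constant independent of $h$.

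One caveat is where \eqref{eq:trajectory-stability-euler} is used. If it is meant in the $L^2(\Omega_0)$ sense of Assumption~\ref{ass:main}, with $\calH=L^2(\Omega_0)$ and the expectation taken over the directions, then it holds pathwise in the directions. The Gronwall step is applied to pathwise quantities before taking $\mathbb E$, so the argument is unchanged.
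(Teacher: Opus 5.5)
Your proof is correct and follows the paper's argument essentially step for step: the same unrolled decomposition of the error into drift, martingale and residual sums, Cauchy--Schwarz on the drift term, Doob's $L^2$ maximal inequality on the martingale part, the factor-$3$ splitting, and a discrete Gronwall step. The only difference is that you carry $(\bar t+1)$ where the paper uses $\bar t$ (from $K_h h\le\bar t$), which gives a slightly larger constant that is still independent of $h$.
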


\begin{proof}
Set $e_k:=z_k^h-z(t_k)$. Since $e_0=0$, summing the error recursion gives
\begin{equation*}
e_j
=
h\sum_{i=0}^{j-1}
\bigl(
b(t_i,z_i^h)-b(t_i,z(t_i))
\bigr)
+
h\sum_{i=0}^{j-1}\xi_{i+1}
-
\sum_{i=0}^{j-1}r_i.
\end{equation*}
For $j\leq k$, the trajectory-wise stability condition \eqref{eq:trajectory-stability-euler} and the Cauchy--Schwarz inequality give
\[
\Big\|
h\sum_{i=0}^{j-1}
\bigl(
b(t_i,z_i^h)-b(t_i,z(t_i))
\bigr)
\Big\|_\calH^2
\leq
\bar tL^2h
\sum_{i=0}^{k-1}
\max_{0\leq r\leq i}\|e_r\|_\calH^2.
\]
Moreover,
$\|\sum_{i=0}^{j-1}r_i\|_\calH\leq B\bar t h$. Hence,
\begin{align}
\max_{0\leq j\leq k}\|e_j\|_\calH^2
\leq
3L^2\bar t h
\sum_{i=0}^{k-1}
\max_{0\leq r\leq i}\|e_r\|_\calH^2
+
3\max_{0\leq j\leq k}
\Big\|
h\sum_{i=0}^{j-1}\xi_{i+1}
\Big\|_\calH^2
+
3B^2\bar t^2h^2.
\label{eq:31-detail}
\end{align}

By Doob's $L^2$ inequality and the martingale-difference property,
\begin{equation}
\bbE\Big[
\max_{0\leq j\leq k}
\big\|
h\sum_{i=0}^{j-1}\xi_{i+1}
\big\|_\calH^2
\Big]
\leq
4\sigma_\xi^2\bar t h.
\label{eq:32-detail}
\end{equation}
Indeed, the cross terms vanish and
\[
\bbE\Big[
\big\|
h\sum_{i=0}^{k-1}\xi_{i+1}
\big\|_\calH^2
\Big]
=
h^2\sum_{i=0}^{k-1}
\bbE\! \left[\|\xi_{i+1}\|_\calH^2\right]
\leq
\sigma_\xi^2\bar t h.
\]

Set $a_k:=\bbE[\max_{0\leq j\leq k}\|e_j\|_\calH^2]$. Taking expectations in \eqref{eq:31-detail} and using
\eqref{eq:32-detail} gives
\[
a_k
\leq
3L^2\bar t h\sum_{i=0}^{k-1}a_i
+
12\sigma_\xi^2\bar t h
+
3B^2\bar t^2h^2.
\]
The discrete Gronwall inequality proves \eqref{eq:H-estimate}. One may take $C=\Ee^{3L^2\bar t^2} (12\sigma_\xi^2\bar t+3B^2\bar t^2)$.
\end{proof}

We next establish the estimates needed to apply Proposition~\ref{prop:hilbert-euler} to the sliced dynamics. For a measurable function $f:\bbR^n\to\bbR^n$, write $\|f\|_{L_\rho^2}^2:=\int_{\bbR^n}\|f(x)\|^2\,\rmd\rho(x)$.

\begin{lemma}[\bf Norm of a sliced displacement]
\label{lem:slice-id}
For every $\rho\in\calP_{2,\mathrm{ac}}(\bbR^n)$ and $\theta\in\calS^{n-1}$, we have
$\|g_\theta[\rho]\|_{L_\rho^2}^2
=
W_2^2(\rho_\theta,\rho_{1,\theta})$.
Consequently,
$\|g_\theta[\rho]\|_{L_\rho^2}
\leq
m_2(\rho)+m_2(\rho_1)$.
\end{lemma}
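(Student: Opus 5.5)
The identity follows from $\|\theta\|=1$ and a change of variables under the projection. The inequality then follows from the triangle inequality for the one-dimensional Wasserstein distance.

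\emph{Step 1 (the identity).} Since $\|\theta\|=1$, we have $\|g_\theta[\rho](x)\|^2=|\theta^\top x-\calT_\rho^\theta(\theta^\top x)|^2$. The integrand depends on $x$ only through $s=\theta^\top x$. The change-of-variables formula for the pushforward $\rho_\theta=(P_\theta)_\#\rho$ therefore gives
\[
\|g_\theta[\rho]\|_{L_\rho^2}^2=\int_{\bbR}|s-\calT_\rho^\theta(s)|^2\,\rho_\theta(\rmd s).
\]
Because $\rho\in\calP_{2,\mathrm{ac}}(\bbR^n)$, the projection $\rho_\theta$ is atomless; indeed it has a density, as noted in Section~\ref{subsec:sliced_feedback}. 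Hence the monotone rearrangement $\calT_\rho^\theta=F_{\rho_{1,\theta}}^{-1}\circ F_{\rho_\theta}$ is an optimal map. The right-hand side above is then exactly $W_2^2(\rho_\theta,\rho_{1,\theta})$, by the same identity already recorded for $\calT_t^\theta$ in Section~\ref{subsec:sliced_feedback}. To spell this out, substitute $z=F_{\rho_\theta}(s)$; $F_{\rho_\theta}$ is continuous, so $F_{\rho_\theta}\#\rho_\theta$ is uniform on $(0,1)$. This converts the integral into the quantile formula~\eqref{eq:wasserstein_1d}.

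\emph{Step 2 (the bound).} Apply the triangle inequality for $W_2$ on $\calP_2(\bbR)$ through $\delta_0$:
\[
W_2(\rho_\theta,\rho_{1,\theta})\le W_2(\rho_\theta,\delta_0)+W_2(\delta_0,\rho_{1,\theta}).
\]
Here $W_2^2(\mu,\delta_0)=\int|s|^2\,\rmd\mu(s)$, since the only coupling is the product. For the projected measures,
\[
\int|s|^2\,\rho_\theta(\rmd s)=\int_{\bbR^n}(\theta^\top x)^2\,\rho(\rmd x)\le m_2(\rho)^2,
\]
by Cauchy--Schwarz and $\|\theta\|=1$. The same holds for $\rho_1$. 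This yields $\|g_\theta[\rho]\|_{L_\rho^2}\le m_2(\rho)+m_2(\rho_1)$.

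An alternative to the triangle inequality is Minkowski's inequality in $L^2_{\rho_\theta}$: $\|s-\calT(s)\|\le\|s\|+\|\calT(s)\|$, with $\|\calT\|_{L^2_{\rho_\theta}}^2=\int s^2\,\rho_{1,\theta}(\rmd s)$ by the pushforward property $\calT_\#\rho_\theta=\rho_{1,\theta}$.

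\emph{Expected difficulty.} The only delicate point is Step 1: justifying that the monotone map realizes the one-dimensional optimal cost. This rests on $\rho_\theta$ being atomless, which is guaranteed here by absolute continuity of $\rho$. Everything else is routine.
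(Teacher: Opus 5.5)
Your proposal is correct and follows essentially the same route as the paper: the pushforward change of variables under $P_\theta$ plus optimality of the monotone map gives the identity, and the triangle inequality through $\delta_0$ together with $|\theta^\top x|\le\|x\|$ gives the bound. Your quantile substitution only spells out the optimality step that the paper takes for granted.
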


\begin{proof}
See Appendix~\ref{sec:proof-lemma1}.
\end{proof}

\begin{lemma}[\bf Preservation of absolute continuity]
\label{lem:ac-preservation}
Let $\rho\in\calP_{2,\mathrm{ac}}(\bbR^n)$, $\theta\in\calS^{n-1}$, and $\gamma\in[0,1)$. Then the map
\begin{equation}
T_{\rho,\theta,\gamma}(x)
\defi
x-\gamma g_\theta[\rho](x)
\label{eq:iter_map}
\end{equation}
satisfies
$(T_{\rho,\theta,\gamma})_\#\rho
\in\calP_{2,\mathrm{ac}}(\bbR^n)$.
Consequently, the laws $\rho_k^h$ generated by \eqref{eq:law-update_iter} with $h\in(0,h_{\bar t}]$ are absolutely continuous for every $k\leq K_h$, for almost every realization of the direction sequence.
\end{lemma}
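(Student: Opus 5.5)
The plan is to decompose $\bbR^n$ into the direction $\theta$ and its orthogonal complement and observe that $T_{\rho,\theta,\gamma}$ acts only on the coordinate $s=\theta^\top x$, leaving $w=(I-\theta\theta^\top)x$ unchanged. Writing $\calT:=\calT_\rho^\theta$, the map reads
\begin{equation*}
T_{\rho,\theta,\gamma}(x)=w+\psi(s)\theta,\qquad \psi(s):=(1-\gamma)s+\gamma\,\calT(s).
\end{equation*}
Since $\calT$ is the monotone (nondecreasing) rearrangement from $\rho_\theta$ to $\rho_{1,\theta}$ and $1-\gamma>0$, the function $\psi$ is strictly increasing with $\psi(s')-\psi(s)\geq(1-\gamma)(s'-s)$ for $s'>s$. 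Hence $\psi$ is injective, and its inverse, defined on $\psi(\bbR)$, is $(1-\gamma)^{-1}$-Lipschitz. Note that $\calT$ is only defined $\rho_\theta$-a.e.; I will fix the everywhere-defined nondecreasing version $F_{\rho_{1,\theta}}^{-1}\circ F_{\rho_\theta}$, which does not change the pushforward.

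\textbf{Absolute continuity.} I then show that $(T_{\rho,\theta,\gamma})_\#\rho$ charges no Lebesgue-null set. Let $N\subset\bbR^n$ be null and write it in coordinates $(s,w)\in\bbR\times\theta^\perp$. Then $T^{-1}(N)=\{(s,w): \psi(s)\in N_w\}$, where $N_w$ is the section of $N$ at $w$. By Fubini, $N_w$ is null for a.e.\ $w$. Because $\psi^{-1}$ is Lipschitz, $\psi^{-1}(N_w)$ is null; Lipschitz maps on $\bbR$ send null sets to null sets. Another application of Fubini shows that $T^{-1}(N)$ is Lebesgue-null, so $\rho(T^{-1}(N))=0$ because $\rho\ll\mu_L^n$. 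Measurability causes no trouble, since $\psi$ is monotone and hence Borel. The only delicate point is this Fubini/Lipschitz-inverse step, and the strict monotonicity coming from $\gamma<1$ is exactly what makes it work. This is the step I expect to be the main obstacle: the degenerate case $\gamma=1$ would collapse mass onto the possibly atomic $\rho_{1,\theta}$.

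\textbf{Finite second moment.} We have $\|T(x)\|\leq\|x\|+\gamma\|g_\theta[\rho](x)\|$. Lemma~\ref{lem:slice-id} gives $\|g_\theta[\rho]\|_{L^2_\rho}\leq m_2(\rho)+m_2(\rho_1)<\infty$, so the pushforward lies in $\calP_2$, and therefore in $\calP_{2,\mathrm{ac}}(\bbR^n)$.

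\textbf{Consequence for the iteration.} For the iteration \eqref{eq:law-update_iter}, I argue by induction on $k$ for each fixed realization of the directions. Suppose $\rho_k^h\in\calP_{2,\mathrm{ac}}$. Apply the result above with $\gamma=h\lambda(t_k)\leq h_{\bar t}\Lambda_{\bar t}<1$, using $t_k\leq\bar t$ for $k<K_h$ together with Assumption~\ref{ass:main}. This yields $\rho_{k+1}^h\in\calP_{2,\mathrm{ac}}$, starting from $\rho_0^h=\rho_0$.
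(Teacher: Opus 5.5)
Your proposal is correct and follows essentially the same route as the paper's proof. The shared steps are the splitting $x=s\theta+z$ with $T_{\rho,\theta,\gamma}(s,z)=(\phi(s),z)$, strict monotonicity of $\phi$ giving a $(1-\gamma)^{-1}$-Lipschitz inverse, Fubini on sections to show that preimages of null sets are null, Minkowski together with Lemma~\ref{lem:slice-id} for the second moment, and the recursion with $\gamma=h\lambda(t_k)\le h_{\bar t}\Lambda_{\bar t}<1$. One small caveat: $F_{\rho_{1,\theta}}^{-1}\circ F_{\rho_\theta}$ can be infinite where $F_{\rho_\theta}\in\{0,1\}$ (e.g.\ a uniform slice mapped to a Gaussian slice), so it is a version defined on a set of full $\rho_\theta$-measure rather than everywhere, which is all your argument needs.
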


\begin{proof}
See Appendix~\ref{sec:proof-lemma2}.
\end{proof}

\begin{lemma}[\bf Uniform moment bounds]
\label{lem:moment-confinement}
For almost every realization of the direction sequence,
\begin{equation}
m_2(\rho_k^h)
\leq
\bigl(m_2(\rho_0)+m_2(\rho_1)\bigr)
\Ee^{\Lambda_{\bar t}\bar t},
\qquad
0\leq k\leq K_h.
\label{eq:random-moment-bound}
\end{equation}
In addition, suppose that Assumption~\ref{ass:averaged-flow} holds. Then, the same bound holds for $m_2(\rho_t)$ on $[0,\bar t]$. Consequently, the iterative and averaged laws remain in $\calK_{\bar t}$.
\end{lemma}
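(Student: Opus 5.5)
We prove both bounds by a Gronwall-type argument on $m_2$, with Lemma~\ref{lem:slice-id} controlling the size of each update. For the iterative scheme, fix a realization of the direction sequence. The conditional law $\rho_{k+1}^h$ is the pushforward of $\rho_k^h$ under $\operatorname{Id}-h\lambda(t_k)g_{\theta_k}[\rho_k^h]$. Hence, by Minkowski's inequality in $L^2_{\rho_k^h}$,
\[
m_2(\rho_{k+1}^h)\leq m_2(\rho_k^h)+h\lambda(t_k)\|g_{\theta_k}[\rho_k^h]\|_{L^2_{\rho_k^h}}.
\]
Lemma~\ref{lem:ac-preservation} guarantees $\rho_k^h\in\calP_{2,\mathrm{ac}}(\bbR^n)$ for $h\le h_{\bar t}$, since $h\lambda(t_k)\le h_{\bar t}\Lambda_{\bar t}<1$. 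Lemma~\ref{lem:slice-id} therefore applies at every step. With $a_k:=m_2(\rho_k^h)+m_2(\rho_1)$, this gives
\[
a_{k+1}\leq(1+h\Lambda_{\bar t})a_k.
\]
Iterating for $k\le K_h$, where $kh\le\bar t$, and using $1+x\le \Ee^x$, we get $a_k\leq a_0\Ee^{\Lambda_{\bar t}\bar t}$. Dropping the $m_2(\rho_1)$ on the left yields \eqref{eq:random-moment-bound}. The bound holds for almost every direction sequence, namely wherever Lemma~\ref{lem:ac-preservation} applies.

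For the averaged flow under Assumption~\ref{ass:averaged-flow}, we work in Lagrangian form with $y(t)=\Phi_t(x_0)$. The first step is a pointwise bound on the averaged field via Jensen's inequality on the sphere:
\[
\|\bar g[\rho_t]\|_{L^2_{\rho_t}}\leq\int_{\calS^{n-1}}\|g_\theta[\rho_t]\|_{L^2_{\rho_t}}\sigma(\rmd\theta)\leq m_2(\rho_t)+m_2(\rho_1),
\]
again by Lemma~\ref{lem:slice-id}, which applies because $\rho_t\in\calP_{2,\mathrm{ac}}$. Set $\phi(t):=\|y(t)\|_{L^2(\Omega_0)}=m_2(\rho_t)$. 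From the absolutely continuous integral form $y(t)=y(s)-\int_s^t\lambda(\tau)\bar g[\rho_\tau](y(\tau))\rmd\tau$ and Minkowski's integral inequality, we obtain
\[
\phi(t)\le\phi(s)+\Lambda_{\bar t}\int_s^t(\phi(\tau)+m_2(\rho_1))\rmd\tau.
\]
Gronwall's inequality applied to $\phi+m_2(\rho_1)$ then gives $\phi(t)+m_2(\rho_1)\le(m_2(\rho_0)+m_2(\rho_1))\Ee^{\Lambda_{\bar t}t}$, which is the claimed bound on $[0,\bar t]$.

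The main obstacle is justifying the continuous-time step rigorously. One needs measurability of $(t,\omega)\mapsto\bar g[\rho_t](y(t))$ and integrability of the Bochner integral in $L^2(\Omega_0)$. Our plan is to use the $W_2$-continuity of $t\mapsto\rho_t$, which makes $\phi$ continuous. We would then apply the integral inequality first on $[0,t]$ using Fubini/Tonelli, with pointwise absolute continuity of $t\mapsto\Phi_t(x)$ for each $x$. To avoid assuming a priori finiteness of the integrand, we would argue with a bootstrap: on the maximal interval where $\phi$ stays below twice the claimed bound, the integrand is bounded, so the inequality holds there, and continuity of $\phi$ closes the argument.

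Membership in $\calK_{\bar t}$ then follows from the two moment bounds together with absolute continuity: Lemma~\ref{lem:ac-preservation} gives it for the iterative laws, and Assumption~\ref{ass:averaged-flow} gives it for the averaged laws.
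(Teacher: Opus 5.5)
Your proof is correct and is essentially the paper's argument: Minkowski's inequality plus Lemma~\ref{lem:slice-id} gives the one-step recursion $m_2(\rho_{k+1}^h)+m_2(\rho_1)\le(1+h\lambda(t_k))\bigl(m_2(\rho_k^h)+m_2(\rho_1)\bigr)$ for the iterates. For the averaged flow, the same bound on $\|\bar g[\rho_t]\|_{L^2_{\rho_t}}$ inserted into the integral form of \eqref{eq:ideal-flow}, followed by Gronwall, gives the claim, and your bootstrap step is harmless but unnecessary, since $W_2$-continuity of $t\mapsto\rho_t$ already makes $m_2(\rho_t)$ continuous and hence bounded on $[0,\bar t]$.
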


\begin{proof}
See Appendix~\ref{sec:proof-lemma3}.
\end{proof}

The preceding estimates also give uniform control of the directional and averaged sliced fields. Set $M_{\bar t}:=
\bigl(m_2(\rho_0)+m_2(\rho_1)\bigr) \Ee^{\Lambda_{\bar t}\bar t}+m_2(\rho_1)$.

\begin{lemma}[\bf Uniform bounds for the sliced fields]
\label{lem:MS-bound}
For every $\rho\in\calK_{\bar t}$,
\begin{equation}\label{eq:sliced-field-bounds}
\|\bar g[\rho]\|_{L_\rho^2}^2
\leq
\int_{\calS^{n-1}}
\|g_\theta[\rho]\|_{L_\rho^2}^2
\,\sigma(\rmd\theta)
\leq
M_{\bar t}^2.
\end{equation}
\end{lemma}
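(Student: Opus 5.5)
The plan has two inequalities, each elementary. For the first one, I will apply Jensen's inequality, in its vector-valued form, to the Bochner integral defining $\bar g[\rho]$. For every fixed $x$, since $\sigma$ is a probability measure and $\|\cdot\|^2$ is convex, we get
\[
\|\bar g[\rho](x)\|^2=\Big\|\int_{\calS^{n-1}}g_\theta[\rho](x)\,\sigma(\rmd\theta)\Big\|^2\leq\int_{\calS^{n-1}}\|g_\theta[\rho](x)\|^2\,\sigma(\rmd\theta).
\]
Cauchy--Schwarz componentwise gives the same bound. Integrating against $\rho$ and using Tonelli's theorem, which is legitimate because the integrand is nonnegative, exchanges the $x$- and $\theta$-integrals and yields the first inequality in \eqref{eq:sliced-field-bounds}.

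One measurability point needs checking first: $(x,\theta)\mapsto g_\theta[\rho](x)$ should be jointly measurable. This matters because $\calT_\rho^\theta$ is defined only $\rho_\theta$-a.e. I would fix the version $\calT_\rho^\theta(s)=F_{\rho_{1,\theta}}^{-1}(F_{\rho_\theta}(s))$. Here $F_{\rho_\theta}(s)=\rho(\{\theta^\top x\le s\})$ is jointly measurable in $(\theta,s)$, and in fact continuous, since $\rho_\theta$ is atomless when $\rho\in\calP_{2,\mathrm{ac}}$. The quantile function $F_{\rho_{1,\theta}}^{-1}(z)$ is measurable in $(\theta,z)$, because $\{F^{-1}(z)\le s\}=\{F(s)\ge z\}$. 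This choice makes the integrals well defined. The problem is the $\rho_\theta$-null ambiguity in $\calT_\rho^\theta$, and I expect this to be the only place needing care. It does not affect any $L_\rho^2$ norm, since for each $\theta$ it changes $g_\theta[\rho]$ only on a $\rho$-null set.

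For the second inequality, I will use Lemma~\ref{lem:slice-id}, which gives $\|g_\theta[\rho]\|_{L^2_\rho}\le m_2(\rho)+m_2(\rho_1)$ for every $\theta$. For $\rho\in\calK_{\bar t}$, the definition of the class gives $m_2(\rho)\le(m_2(\rho_0)+m_2(\rho_1))\Ee^{\Lambda_{\bar t}\bar t}$. Hence $\|g_\theta[\rho]\|_{L^2_\rho}\le M_{\bar t}$ uniformly in $\theta$. Squaring and integrating against the probability measure $\sigma$ gives the bound $M_{\bar t}^2$, which completes \eqref{eq:sliced-field-bounds}.

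Alternatively, the middle term equals $SW_2^2(\rho,\rho_1)$ by Lemma~\ref{lem:slice-id}. It could then be bounded through $SW_2\le W_2\le m_2(\rho)+m_2(\rho_1)$, which gives the same result.
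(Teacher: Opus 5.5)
Your proposal is correct and matches the paper's proof. The first inequality comes from pointwise Jensen over the probability measure $\sigma$ plus Fubini/Tonelli, and the second from the uniform-in-$\theta$ bound $\|g_\theta[\rho]\|_{L_\rho^2}\le m_2(\rho)+m_2(\rho_1)\le M_{\bar t}$, via Lemma~\ref{lem:slice-id} and the definition of $\calK_{\bar t}$; your remark on fixing the jointly measurable version $\calT_\rho^\theta=F_{\rho_{1,\theta}}^{-1}\circ F_{\rho_\theta}$ adds a detail the paper leaves implicit.
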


\begin{proof}
See Appendix~\ref{sec:proof-lemma4}.
\end{proof}

It remains to verify the stability of the averaged drift and the local consistency of the averaged flow. For $X\in L^2(\Omega_0;\bbR^n)$, let $\bbP_X$ denote its law and define
\begin{equation}
b(t,X)
\defi
-\lambda(t)\bar g[\bbP_X](X).
\label{eq:b-def}
\end{equation}

\begin{lemma}[\bf Stability and local consistency]
\label{lem:euler-residual}
Suppose that Assumptions~\ref{ass:averaged-flow} and \ref{ass:main} hold. Then, for every $h\in(0,h_{\bar t}]$ and $k\in\{0,\ldots,K_h\}$,
\begin{equation}
\|b(t_k,x_k^h)-b(t_k,y(t_k))\|_{L^2(\Omega_0)}
\le
\Lambda_{\bar t}L_{\bar t}
\|x_k^h-y(t_k)\|_{L^2(\Omega_0)}
\label{eq:43-rewrite}
\end{equation}
for almost every realization of the direction history. Moreover, whenever $0\leq t\leq t+h\leq\bar t$, the averaged flow satisfies
\begin{equation}
y(t+h)
=
y(t)+hb(t,y(t))+r_{t,h},
\label{eq:44-rewrite}
\end{equation}
where
$\|r_{t,h}\|_{L^2(\Omega_0)}\leq B_{\bar t}h^2$ and
$B_{\bar t}
\defi
\frac{M_{\bar t}}{2}
\bigl(
\Lambda'_{\bar t}
+
\Lambda_{\bar t}^2L_{\bar t}
\bigr)$.
\end{lemma}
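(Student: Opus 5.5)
The stability bound \eqref{eq:43-rewrite} follows directly from the definition \eqref{eq:b-def} and Assumption~\ref{ass:main}(1). Since $\bbP_{x_k^h}$ is meant conditionally on $\Theta_k$, it equals $\rho_k^h$, and $\bbP_{y(t_k)}=\rho_{t_k}$. Hence
\[
b(t_k,x_k^h)-b(t_k,y(t_k))=-\lambda(t_k)\bigl(\bar g[\rho_k^h](x_k^h)-\bar g[\rho_{t_k}](y(t_k))\bigr),
\]
and $0\le\lambda(t_k)\le\Lambda_{\bar t}$ together with item 1 of Assumption~\ref{ass:main} gives the claim for almost every realization of the direction history.

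For the local consistency \eqref{eq:44-rewrite}, I will work pathwise with the absolutely continuous curve $s\mapsto y(s)=\Phi_s(x(0))$. By Assumption~\ref{ass:averaged-flow}, for $\rho_0$-a.e.\ $x(0)$,
\[
y(t+h)-y(t)=\int_t^{t+h}\beta(s)\,\rmd s,\qquad \beta(s):=-\lambda(s)\bar g[\rho_s](y(s))=b(s,y(s)).
\]
It follows that
\[
r_{t,h}=\int_t^{t+h}\bigl(\beta(s)-\beta(t)\bigr)\rmd s,
\]
and by Minkowski's integral inequality in $L^2(\Omega_0)$,
\[
\|r_{t,h}\|_{L^2(\Omega_0)}\le\int_t^{t+h}\|\beta(s)-\beta(t)\|_{L^2(\Omega_0)}\rmd s.
\]
I will then split $\beta(s)-\beta(t)$ into two pieces:
\[
-(\lambda(s)-\lambda(t))\,\bar g[\rho_s](y(s))\;-\;\lambda(t)\bigl(\bar g[\rho_s](y(s))-\bar g[\rho_t](y(t))\bigr).
\]
For the first piece, the mean value theorem gives $|\lambda(s)-\lambda(t)|\le\Lambda'_{\bar t}(s-t)$. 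Lemma~\ref{lem:moment-confinement} gives $\rho_s\in\calK_{\bar t}$, so Lemma~\ref{lem:MS-bound} yields
\[
\|\bar g[\rho_s](y(s))\|_{L^2(\Omega_0)}=\|\bar g[\rho_s]\|_{L^2_{\rho_s}}\le M_{\bar t}.
\]
For the second piece, item 2 of Assumption~\ref{ass:main} gives the bound $\Lambda_{\bar t}L_{\bar t}\|y(s)-y(t)\|_{L^2(\Omega_0)}$. Applying Minkowski once more with the same field bound,
\[
\|y(s)-y(t)\|_{L^2(\Omega_0)}\le\int_t^s\|\beta(\tau)\|_{L^2(\Omega_0)}\,\rmd\tau\le\Lambda_{\bar t}M_{\bar t}(s-t).
\]
Combining the two pieces gives $\|\beta(s)-\beta(t)\|_{L^2(\Omega_0)}\le M_{\bar t}(\Lambda'_{\bar t}+\Lambda_{\bar t}^2L_{\bar t})(s-t)$. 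Integrating over $s\in[t,t+h]$ produces the factor $h^2/2$, which is exactly $B_{\bar t}h^2$.

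The main obstacle is measure-theoretic. The pathwise integral identity must be valid as a Bochner integral in $L^2(\Omega_0)$, which requires joint measurability of $(s,\omega)\mapsto\beta(s)$ and integrability. I will handle this with Fubini, using the uniform $L^2$ bound $\Lambda_{\bar t}M_{\bar t}$ on $\beta$ to justify exchanging the norm with the time integral. I also need to check that the bound in Lemma~\ref{lem:MS-bound} applies for almost every $s$, which it does since $\rho_s\in\calK_{\bar t}$ for all $s\in[0,\bar t]$.
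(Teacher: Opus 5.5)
Your proposal is correct and follows essentially the same route as the paper. The stability bound comes directly from \eqref{eq:b-def}, $\lambda(t_k)\le\Lambda_{\bar t}$, and item~1 of Assumption~\ref{ass:main}, and the residual uses the same split of $b(s,y(s))-b(t,y(t))$ into a gain-variation term bounded by $\Lambda'_{\bar t}M_{\bar t}|s-t|$ and a field-variation term bounded via item~2 together with $\|y(s)-y(t)\|_{L^2(\Omega_0)}\le\Lambda_{\bar t}M_{\bar t}|s-t|$, then integrates over $[t,t+h]$ to get $B_{\bar t}h^2$; your explicit Minkowski-integral and measurability remarks only make rigorous a step the paper leaves implicit.
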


\begin{proof}
See Appendix~\ref{sec:proof-lemma5}.
\end{proof}

\subsection{Proof of Theorem~\ref{thm:main}}

We now assemble the preceding estimates. Let $\calH:=L^2(\Omega_0;\bbR^n)$ and $\|X\|_\calH^2:=\bbE_0[\|X\|^2]$, recall the drift $b$ defined in \eqref{eq:b-def}, and for $X\in\calH$ with $\bbP_X\in\calK_{\bar t}$, define $B(t,X,\theta):=- \lambda(t)g_\theta[\bbP_X](X)$. By Lemmas~\ref{lem:ac-preservation} and \ref{lem:moment-confinement}, whenever $h\in (0,h_{\bar{t}}]$, the random laws $\rho_k^{h}$ belong to $\calK_{\bar t}$ for all $k\le K_h$, almost surely with respect to the sampled directions. The averaged laws $\rho_{t_k}$ also belong to $\calK_{\bar t}$.

We first identify the random sliced iteration with the stochastic Euler scheme in Proposition~\ref{prop:hilbert-euler}. The recursion \eqref{eq:random-scheme} can be written as
$x_{k+1}^h
=
x_k^h+h b(t_k,x_k^h)+h\xi_{k+1}$,
where
$\xi_{k+1}
:=
B(t_k,x_k^h,\theta_k)-b(t_k,x_k^h)$. Since $\theta_k$ is independent of $\Theta_k$ and has distribution $\sigma$, $\bbE_{\{\theta_k\}}
\bigl[
\xi_{k+1}\mid\Theta_k
\bigr]
=
0$.
Thus, $\{\xi_{k+1}\}$ is a martingale difference sequence with respect to the filtration generated by the sampled directions.

It remains to bound its conditional second moment. We have
\begin{align*}
\|\xi_{k+1}\|_\calH^2
\le
\Lambda_{\bar t}^2
\left\|
g_{\theta_k}[\rho_k^{h}](x_k^h)
-
\bar g[\rho_k^{h}](x_k^h)
\right\|_\calH^2\le
2\Lambda_{\bar t}^2
\left\|
g_{\theta_k}[\rho_k^{h}](x_k^h)
\right\|_\calH^2
+
2\Lambda_{\bar t}^2
\left\|
\bar g[\rho_k^{h}](x_k^h)
\right\|_\calH^2.
\end{align*}
Taking conditional expectation and using
Lemma~\ref{lem:MS-bound} gives
\begin{align*}
\bbE_{\{\theta_k\}}
\! \left[
\|\xi_{k+1}\|_\calH^2
\mid\Theta_k
\right]\le
2\Lambda_{\bar t}^2
\bbE_{\{\theta_k\}}
\! \left[
\left\|
g_{\theta_k}[\rho_k^{h}](x_k^h)
\right\|_\calH^2
\mid\Theta_k
\right]
+
2\Lambda_{\bar t}^2
\left\|
\bar g[\rho_k^{h}](x_k^h)
\right\|_\calH^2\le
4\Lambda_{\bar t}^2M_{\bar t}^2.
\end{align*}
Hence, the martingale-noise condition in Proposition~\ref{prop:hilbert-euler} holds with $\sigma_\xi =2\Lambda_{\bar t}M_{\bar t}$.

We next verify the deterministic part of the Euler estimate. By Lemma~\ref{lem:euler-residual}, the trajectory-wise stability condition \eqref{eq:trajectory-stability-euler} holds for $z_k^h=x_k^h$ and $z(t_k)=y(t_k)$ with $L=\Lambda_{\bar t}L_{\bar t}$. Moreover, the trajectory under the averaged controller satisfies 
\[ 
y(t_{k+1}) = y(t_k)+h b(t_k,y(t_k))+r_k, \quad \|r_k\|_\calH \le B_{\bar t}h^2. 
\] 
Proposition~\ref{prop:hilbert-euler}, applied with $z_k^h=x_k^h$ and $z(t_k)=y(t_k)$, therefore yields
\begin{equation}
\scalebox{0.9}{$\bbE_{\{\theta_k\}}
\! \left[
\max_{0\le k\le K_h}
\|x_k^h-y(t_k)\|_{L^2(\Omega_0)}^2
\right]
\le
C(\bar t,L,B_{\bar t},\sigma_\xi)h$}.
\label{eq:H-main-bound}
\end{equation}

We now transfer this estimate from the lifted variables to their probability laws. For each fixed realization of the direction sequence, $x_k^h$ has law $\rho_k^{h}$ under $\bbP_0$, while $y(t_k)$ has law $\rho_{t_k}$. Hence, their joint law provides a coupling, and therefore
$$W_2^2
\bigl(
\rho_k^{h},
\rho_{t_k}
\bigr)
\le
\|x_k^h-y(t_k)\|_{L^2(\Omega_0)}^2.$$
Taking the maximum over $k$, then expectation with respect to the sampled directions, and using \eqref{eq:H-main-bound}, we obtain
\[
\bbE_{\{\theta_k\}}
\!\left[
\max_{0\le k\le K_h}
W_2^2
\bigl(
\rho_k^{h},
\rho_{t_k}
\bigr)
\right]
\le
C(\bar t,L,B_{\bar t},\sigma)h
\]
that proves \eqref{eq:main-W2}. The sliced Wasserstein estimate \eqref{eq:main-SW2} follows immediately from $SW_2^2(\rho,\eta)\le\frac{1}{n}W_2^2(\rho,\eta)$.

It remains to establish convergence at an arbitrary fixed time $t\in[0,\bar t]$. Let $k_h:=\lfloor t/h\rfloor$. Since $|t_{k_h}-t|\le h$, the temporal Lipschitz estimate \eqref{eq:Y-lipschitz-time} gives
\[
W_2(\rho_{t_{k_h}},\rho_t)
\le
\|y(t_{k_h})-y(t)\|_{L^2(\Omega_0)}
\le
\Lambda_{\bar t}M_{\bar t}h.
\]
By the triangle inequality,
\begin{align*}
W_2^2
\bigl(
\rho_{k_h}^{h},
\rho_t
\bigr)
&\le 
\left(
W_2
\bigl(
\rho_{k_h}^{h},
\rho_{t_{k_h}}
\bigr)
+
W_2(\rho_{t_{k_h}},\rho_t)
\right)^2\le 
2W_2^2
\bigl(
\rho_{k_h}^{h},
\rho_{t_{k_h}}
\bigr)
+
2W_2^2(\rho_{t_{k_h}},\rho_t)\\
&\le 
2\!\!\!\! \max_{0\le k\le K_h}\!\!\!
W_2^2
\bigl(
\rho_k^{h},
\rho_{t_k}
\bigr)
+
2W_2^2(\rho_{t_{k_h}},\rho_t).
\end{align*}
Taking expectations and using the preceding estimates, we obtain
\[
\bbE_{\{\theta_k\}}
\! \left[
W_2^2
\bigl(
\rho_{k_h}^{h},
\rho_t
\bigr)
\right]
\le
2C(\bar t,L,B_{\bar t},\sigma_\xi)h
+
2\Lambda_{\bar t}^2M_{\bar t}^2h^2.
\]
Consequently, $\lim_{h\searrow0} \bbE_{\{\theta_k\}}\! \big[W_2^2\bigl(\rho_{\lfloor t/h\rfloor}^{h},\rho_t\bigr)\big]=0$.

\section{Sliced OT Control for Fully Actuated Linear Systems}
\label{sec:full_actuated}

We now extend the sliced optimal transport controller from the single integrator to a linear time-varying system. The main difficulty is that the control input no longer prescribes the state velocity directly, as the motion is also shaped by the drift and the time-varying input directions. We therefore introduce coordinates that remove the drift and normalize the control authority over the steering horizon.

\subsection{Drift removal and reachability normalization}
\label{subsec:reachability_coordinates}

Consider the linear system
\begin{equation}\label{eq:ltv}
\dot{x}(t)
=
A(t)x(t)+B(t)u(t).
\end{equation}

\begin{assumption}\label{ass:full_actuation}
The matrices $A(t)$ and $B(t)$ are continuous on $[0,1]$, and $B(t)$ has full row rank for every $t\in[0,1]$.
\end{assumption}

Let $\Phi(t,s)$ denote the state-transition matrix associated with $A(\cdot)$. To remove the drift, set $F_t:=\Phi(1,t)$ and introduce $y(t)\defi F_t x(t)$. Since $\dot F_t=-F_tA(t)$, the transformed state satisfies $$\dot y(t)=F_tB(t)u(t).$$ The reachability Gramian over $[0,1]$ is 
\[
G_{10}
:=
\int_0^1
\Phi(1,s)B(s)B(s)^\top\Phi(1,s)^\top
\,\rmd s.
\]

Assumption~\ref{ass:full_actuation} implies that $G_{10}\succ0$. Let $L:=G_{10}^{-1/2}$ and introduce the reachability-normalized state $z(t)\defi Ly(t)=LF_t x(t).$ Its dynamics are $$\dot z(t)=C_tu(t),$$ where $C_t:=LF_tB(t)$. Defining $Q_t:=C_tC_t^\top$, we have $\int_0^1Q_t\,\rmd t=LG_{10}L^\top=I$. Thus, the transformed coordinates remove the drift and normalize the aggregate control authority over the steering horizon. Under Assumption~\ref{ass:full_actuation}, $C_t$ has full row rank and $Q_t\succ0$ for every $t\in[0,1]$. Since $Q_t$ is continuous, there exist constants $q_->0$ and $q_+<\infty$ such that 
\begin{equation}\label{eq:q_bounds}
q_-I
\preceq
Q_t
\preceq
q_+I
\end{equation}
for every $t\in[0,1]$.

\subsection{Feedback realization of the sliced flow}\label{subsec:feedback_realization}

We now construct a feedback law that makes the normalized state $z(t)=LF_t x(t)$ follow the averaged sliced flow of the single integrator. Let $\rho_t$ denote the law of $x(t)$. The corresponding law of $z(t)$ is $\what{\rho}_t\defi (LF_t)_\#\rho_t$. Since $F_1=I$, the target law in the normalized coordinates is $\what{\rho}_1=L_\#\rho_1$.

For each $\theta\in\calS^{n-1}$, let $\what{\calT}_t^\theta$ be the one-dimensional optimal transport map from $(P_\theta)_\#\what{\rho}_t$ to $(P_\theta)_\#\what{\rho}_1$. Define the associated sliced discrepancy field by
\[
\what{g}_t(z)
:=
\int_{\calS^{n-1}}
\! \left(
\theta^\top z
-
\what{\calT}_t^\theta(\theta^\top z)
\right)
\theta
\,\sigma(\rmd\theta).
\]

For a continuous gain $\lambda(t)$, the desired velocity in the normalized coordinates is $-\lambda(t)\what{g}_t(z)$. Since $\dot z(t)=C_tu(t)$, the control must satisfy 
$$C_tu(t)=-\lambda(t)\what{g}_t(z(t)).$$

Under Assumption~\ref{ass:full_actuation}, $C_t$ has full row rank and $Q_t=C_tC_t^\top$ is invertible. Thus, $C_t^\top Q_t^{-1}$ is a right inverse of $C_t$, and the minimum-norm control realizing the prescribed velocity is
\begin{equation}\label{eq:control_linsys}
u(t)
=
-\lambda(t)
C_t^\top Q_t^{-1}
\what{g}_t\bigl(LF_t x(t)\bigr).
\end{equation}

Substitution into the normalized dynamics gives $\dot z(t)=-\lambda(t)\what{g}_t(z(t))$. Hence, the normalized state follows exactly the averaged sliced flow developed for the single integrator, and its terminal convergence can be transferred to the original coordinates.

\begin{proposition}\label{prop:fully_actuated_convergence}
Suppose that Assumption~\ref{ass:full_actuation} holds and that the conditions of Theorem~\ref{thm:convergence} are satisfied for the transformed initial and target laws. Then, under the controller \eqref{eq:control_linsys}, $\lim_{t\nearrow1}m(t)=m_1$ and $\lim_{t\nearrow1}\Sigma(t)=\Sigma_1$.
\end{proposition}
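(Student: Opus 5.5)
The plan is to transfer Theorem~\ref{thm:convergence} from the normalized coordinates back to the original state through the time-dependent linear change of variables $z(t)=LF_tx(t)$.

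\textbf{Step 1: the normalized flow is the single-integrator flow.} Substituting \eqref{eq:control_linsys} into $\dot z(t)=C_tu(t)$ and using $C_tC_t^\top Q_t^{-1}=I$ gives
\[
\dot z(t)=-\lambda(t)\what{g}_t(z(t)).
\]
This is exactly the averaged sliced feedback \eqref{eq:optimal_controller_single_integ} for the single integrator. Its endpoint laws are $\what\rho_0=(LF_0)_\#\rho_0$ and $\what\rho_1=L_\#\rho_1$, the latter because $F_1=I$.

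When $\rho_0$ and $\rho_1$ are Gaussian with means $m_0,m_1$ and covariances $\Sigma_0,\Sigma_1$, the transformed endpoints are Gaussian with means $\what m_0=LF_0m_0$, $\what m_1=Lm_1$ and covariances $\what\Sigma_0=LF_0\Sigma_0F_0^\top L^\top$, $\what\Sigma_1=L\Sigma_1L^\top$. Both covariances are positive definite, since $L$ and $F_0=\Phi(1,0)$ are invertible.

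\textbf{Step 2: apply the Gaussian results in $z$-coordinates.} By Proposition~\ref{prop:linearlity_Gaussian}, $z(t)\sim\calN(\what m(t),\what\Sigma(t))$, and the feedback is affine in $z$. The solution in $z$ is unique, and the map $x\mapsto z$ is an invertible affine-in-state transformation. Hence the closed loop in $x$ is well posed, with $x(t)=F_t^{-1}L^{-1}z(t)$.

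By hypothesis, the conditions of Theorem~\ref{thm:convergence} hold for $(\what\rho_0,\what\rho_1)$. Therefore $\what m(t)\to Lm_1$ and $\what\Sigma(t)\to L\Sigma_1L^\top$ as $t\nearrow1$.

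\textbf{Step 3: map back to the original coordinates.} The mean and covariance of $x(t)$ are
\[
m(t)=F_t^{-1}L^{-1}\what m(t),\qquad \Sigma(t)=F_t^{-1}L^{-1}\what\Sigma(t)L^{-\top}F_t^{-\top}.
\]
The state-transition matrix $\Phi(1,t)$ is continuous in $t$ on $[0,1]$, because $A$ is continuous (Assumption~\ref{ass:full_actuation}), and $\Phi(1,1)=I$. Its inverse $\Phi(t,1)$ is also continuous, so $F_t^{-1}\to I$ as $t\nearrow1$.

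Passing to the limit in products of convergent matrices gives
\[
m(t)\to L^{-1}Lm_1=m_1,\qquad \Sigma(t)\to L^{-1}L\Sigma_1L^\top L^{-\top}=\Sigma_1.
\]

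\textbf{Main obstacle.} The main delicate point is the interpretation of the hypothesis. The gain condition $\int_0^1\lambda=\infty$ and the uniform lower bound $\what\Sigma(t)\succeq\alpha I$ must be imposed on the normalized flow, not the original one. I would state explicitly that ``the conditions of Theorem~\ref{thm:convergence}'' refer to $\what\Sigma$.

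If instead a bound $\Sigma(t)\succeq\alpha' I$ in $x$-coordinates were given, I would convert it into one for $\what\Sigma$. Since $LF_t$ is continuous and invertible on $[0,1]$, its smallest singular value is bounded below uniformly in $t$. This yields $\what\Sigma(t)\succeq\alpha'\,\sigma_{\min}(LF_t)^2 I$, which is bounded below uniformly.

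Beyond this point, the argument is only continuity of the change of coordinates.
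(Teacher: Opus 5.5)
Your proposal is correct and follows essentially the same route as the paper. You substitute \eqref{eq:control_linsys} to get $\dot z(t)=-\lambda(t)\what{g}_t(z(t))$, apply Proposition~\ref{prop:linearlity_Gaussian} and Theorem~\ref{thm:convergence} in the normalized coordinates to obtain $m_z(t)\to Lm_1$ and $\Sigma_z(t)\to L\Sigma_1L^\top$, and then invert $z=LF_tx$ using $F_t^{-1}\to I$.

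Your extra remark is a correct clarification of what the paper leaves implicit in the phrase ``for the transformed initial and target laws.'' It notes that the hypotheses of Theorem~\ref{thm:convergence} refer to the normalized covariance, and shows how an $x$-coordinate lower bound would transfer via $\sigma_{\min}(LF_t)$.
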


\begin{proof}
Since the normalized state follows $\dot z(t) = -\lambda(t)\what g_t(z(t))$, the law of $z(t)$ follows the averaged sliced flow with target
$L_\#\rho_1$. 
Let $m_z(t)$ and $\Sigma_z(t)$ denote the mean and covariance of
$z(t)$. The transformed target law has mean $Lm_1$ and covariance
$L\Sigma_1L^\top$. Proposition~\ref{prop:linearlity_Gaussian} and
Theorem~\ref{thm:convergence} therefore give
$\lim_{t\nearrow1}m_z(t)
=
Lm_1$.
Likewise, we have 
$\lim_{t\nearrow1}\Sigma_z(t)
=
L\Sigma_1L^\top$.

The relation $z(t)=LF_t x(t)$ gives
$m_z(t)
=
LF_t m(t)$.
Since $L$ and $F_t$ are invertible,
$m(t)
=
F_t^{-1}L^{-1}m_z(t)$.
Continuity of the state-transition matrix and $F_1=I$ imply that
$\lim_{t\nearrow1}F_t^{-1}=I$. Taking the limit in the preceding
identity gives $\lim_{t\nearrow1}m(t)=m_1$. The covariances satisfy
$$\Sigma_z(t)
=
LF_t\Sigma(t)F_t^\top L^\top$$
Equivalently, it holds that
$$\Sigma(t)
=
F_t^{-1}L^{-1}
\Sigma_z(t)
L^{-\top}F_t^{-\top}$$
Taking the limit and using
$\lim_{t\nearrow1}F_t^{-1}=I$ gives
$\lim_{t\nearrow1}\Sigma(t)
=
\Sigma_1$.
\end{proof}

\subsection{Energy bounds}
\label{subsec:energy_bounds}

The preceding construction establishes terminal convergence through the transformed state $z(t)$. We now examine how the energy of this sliced flow is reflected in the control effort required by the original linear system. The relation between the two is governed by the matrices $Q_t$.

Define the sliced discrepancy in the transformed coordinates by $S(t)\defi SW_2^2\bigl(\what{\rho}_t,\what{\rho}_1\bigr)$. Equivalently, $S(t)=SW_2^2\bigl((LF_t)_\#\rho_t,L_\#\rho_1\bigr)$. As in Proposition~\ref{prop:time_derivative}, its derivative along the transformed density flow is
\begin{equation}\label{eq:S_derivative_general}
\dot S(t)
=
2\bbE
\! \left[
\what{g}_t(z(t))^\top\dot z(t)
\right].
\end{equation}
Under the controller \eqref{eq:control_linsys}, the transformed dynamics satisfy
\[
\dot z(t)
=
-\lambda(t)\what{g}_t(z(t)).
\]
Substitution into \eqref{eq:S_derivative_general} gives
$$\dot S(t)
=
-2\lambda(t)
\bbE
\! \left[
\|\what{g}_t(z(t))\|^2
\right].$$
Hence, $S(t)$ is non-increasing along the transformed density evolution.

We distinguish the physical input energy from the kinetic energy of the transformed flow defined by
\[
\calE_z(0,t)
\defi
\int_0^t
\bbE\! \left[\|\dot z(s)\|^2 \right]
\,\rmd s.
\]
Since $z(t)$ follows the averaged sliced flow, the following proposition compares the physical input energy
with the kinetic energy of the transformed flow.

\begin{proposition}[\bf Input-energy bounds]\label{thm:energy_identity}
Suppose that Assumption~\ref{ass:full_actuation} holds. Then, under the controller in \eqref{eq:control_linsys}, we have
that, for any $t\in[0,1]$,
\[
\frac{1}{q_+}\calE_z(0,t)
\le
\calE_u(0,t)
\le
\frac{1}{q_-}\calE_z(0,t).
\]
\end{proposition}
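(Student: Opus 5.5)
The plan is a pointwise comparison of $\|u(t)\|^2$ with $\|\dot z(t)\|^2$. Both sides are then integrated in expectation over $[0,t]$. Throughout, write $w(t):=-\lambda(t)\what g_t(z(t))$ for the prescribed transformed velocity, so that under \eqref{eq:control_linsys} we have $u(t)=C_t^\top Q_t^{-1}w(t)$ and $\dot z(t)=C_tu(t)=w(t)$.

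\textbf{Step 1 (energy identity).} Compute
\[
\|u(t)\|^2=w(t)^\top Q_t^{-1}C_tC_t^\top Q_t^{-1}w(t)=w(t)^\top Q_t^{-1}w(t)=\dot z(t)^\top Q_t^{-1}\dot z(t).
\]
This uses only $Q_t=C_tC_t^\top$ and the invertibility of $Q_t$ guaranteed by Assumption~\ref{ass:full_actuation}.

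\textbf{Step 2 (pointwise bounds).} From \eqref{eq:q_bounds} we get $q_+^{-1}I\preceq Q_t^{-1}\preceq q_-^{-1}I$. Hence, pathwise and for every $s\in[0,1]$,
\[
\frac{1}{q_+}\|\dot z(s)\|^2\le\|u(s)\|^2\le\frac{1}{q_-}\|\dot z(s)\|^2.
\]

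\textbf{Step 3 (integration).} Take expectations and integrate over $[0,t]$, using monotonicity of the integral to obtain the claimed inequality between $\calE_u(0,t)=\int_0^t\bbE[\|u(s)\|^2]\,\rmd s$ and $\calE_z(0,t)$. Both sides may be infinite; the inequalities still hold in $[0,\infty]$ because all integrands are nonnegative, so no finiteness assumption is needed. Measurability of $s\mapsto\bbE\|u(s)\|^2$ follows from that of $\what g_t$ along the flow, which is implicit in the well-posedness of the closed loop.

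\textbf{Main obstacle.} The only delicate point is bookkeeping rather than analysis. One must confirm that $\calE_u$ here denotes the physical input energy with $u$ given by \eqref{eq:control_linsys}, and that $\dot z=C_tu$ holds exactly. This requires $z(t)=LF_tx(t)$ with $\dot F_t=-F_tA(t)$, so that the drift cancels. Once this is checked, the argument reduces to the Rayleigh-quotient bounds on $Q_t^{-1}$ in Step 2.
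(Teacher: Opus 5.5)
Your proposal is correct and follows the paper's proof essentially verbatim: the identity $\|u(t)\|^2=\dot z(t)^\top Q_t^{-1}\dot z(t)$, the bounds $q_+^{-1}I\preceq Q_t^{-1}\preceq q_-^{-1}I$ from \eqref{eq:q_bounds}, and then taking expectations and integrating over $[0,t]$. Your remark that the inequalities hold in $[0,\infty]$ because the integrands are nonnegative is a harmless refinement that the paper leaves implicit.
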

\vsp
\begin{proof}
Under the controller in \eqref{eq:control_linsys},
\begin{align}
\|u(t)\|^2
&=
\lambda(t)^2
\what{g}_t(z(t))^\top
Q_t^{-1}
\what{g}_t(z(t))=
\dot z(t)^\top
Q_t^{-1}
\dot z(t).
\label{eq:input_norm}
\end{align}

The bounds in \eqref{eq:q_bounds} imply
$\frac{1}{q_+}\|\xi\|^2
\leq
\xi^\top Q_t^{-1}\xi
\leq
\frac{1}{q_-}\|\xi\|^2$
for every $\xi\in\bbR^n$ and $t\in[0,1]$. Applying this inequality to $\xi=\dot z(t)$ and using \eqref{eq:input_norm} yields
$\frac{1}{q_+}\|\dot z(t)\|^2
\leq
\|u(t)\|^2
\leq
\frac{1}{q_-}\|\dot z(t)\|^2$.
Taking expectations and integrating over $[0,t]$ gives the desired bounds.
\end{proof}

Proposition~\ref{thm:energy_identity} shows that the physical control energy is equivalent, up to the constants $q_-$ and $q_+$, to the
kinetic energy of the transformed sliced flow. These constants quantify the distortion introduced when the averaged sliced flow is realized through time-varying input directions of the original system.

\section{Algorithm for General Controllable Systems}\label{sec:general_controllable}
The construction in Section~\ref{sec:full_actuated} relies on uniform full actuation. When $B(t)$ has full row rank, the matrix $Q_t$ is uniformly positive definite, and any prescribed velocity in the transformed coordinates can be realized instantaneously. The transformed dynamics can therefore be made to coincide with the averaged sliced flow of the single integrator, leading to terminal convergence and corresponding energy bounds.

For a general controllable system, however, $B(t)$ need not have full row rank. The matrix $Q_t$ may then be singular, and an arbitrary transformed velocity cannot, in general, be realized at each instant. Thus, the continuous-time feedback construction of Section~\ref{sec:full_actuated} is no longer directly applicable.

Controllability, nevertheless, ensures that a prescribed displacement can be realized over a finite time interval. This suggests replacing instantaneous velocity matching with finite-step displacement matching. Accordingly, over each interval of a finite partition, we first determine a sliced displacement and then construct a control input that realizes this displacement exactly by the end of the interval.

\subsection{Finite-step sliced update}

Consider a partition $0=t_0<t_1<\cdots<t_N=1$. We retain the fixed transformation introduced in Section~\ref{sec:full_actuated}. For each interval $J_k=[t_k,t_{k+1}]$, define the local lifted Gramian by
$\calG_k=\int_{t_k}^{t_{k+1}} C_sC_s^\top \rmd s$.

\begin{assumption}[\bf Local controllability]
\label{ass:local_controllability}
The system is controllable on each interval of the partition, i.e.,
the controllability Gramian satisfies
$\calG_k\succ0$ for every $k$.
\end{assumption}

Assumption~\ref{ass:local_controllability} requires the transformed system to be controllable over each interval $[t_k,t_{k+1}]$. It does not require $C_t$ to have full row rank at every time. Thus, although an arbitrary transformed velocity may not be realizable instantaneously, any prescribed finite displacement can be realized over the interval. For a time-invariant controllable pair $(A,B)$, the condition holds on every interval of positive length.

At time $t_k$, the transformed state and its law are $z_{t_k}=LF_{t_k}x(t_k)$
and $\widehat\rho_k=(LF_{t_k})_\#\rho_{t_k}$, respectively. The transformed target law remains fixed and is given by $\widehat\rho_1=L_\#\rho_1$.

Let $\Xi_k:\bbR^n\rightarrow\bbR^n$ denote a sliced update map constructed from $\widehat\rho_k$ and $\widehat\rho_1$. For a direction $\theta_k\in\calS^{n-1}$, one possible choice is
$$
\Xi_k(z)=z-\alpha_k\! \left(\theta_k^\top z - \widehat{\calT}_k^{\theta_k}(\theta_k^\top z) \right)\theta_k,
$$
where $\alpha_k$ is a step size and $\widehat{\calT}_k^{\theta_k}$ is the one-dimensional optimal transport map between the corresponding projected laws. Alternatively, using the averaged sliced discrepancy $\widehat g_k$, one may take $$\Xi_k(z)=z-\alpha_k\widehat g_k(z).$$

The desired displacement is $\delta_k(z)\defi \Xi_k(z)-z$. Given the sampled state $z_{t_k}$, we apply over the interval $J_k$ the input
\begin{equation}
u(t)
=
C_t^\top\calG_k^{-1}\delta_k(z_{t_k}).
\label{eq:lifted-input-z}
\end{equation}
Equivalently, 
$$u(t)
=
B(t)^\top F_t^\top L^\top
\calG_k^{-1}\delta_k(z_{t_k})$$ in original coordinates.
Here, $z_{t_k}=LF_{t_k}x(t_k)$, and the input is applied for $t\in J_k$.

\begin{proposition}[\bf Finite-step exactness]
\label{prop:finite_step_exact}
Suppose that Assumption~\ref{ass:local_controllability} holds. Under the input \eqref{eq:lifted-input-z}, the transformed state satisfies
$z_{t_{k+1}}=\Xi_k(z_{t_k})$. Consequently,
$\widehat\rho_{k+1}
=
(\Xi_k)_\#\widehat\rho_k$.
\end{proposition}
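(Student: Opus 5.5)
The plan is to integrate the transformed dynamics $\dot z(t)=C_tu(t)$ over $J_k$ directly. On $[t_k,t_{k+1}]$ the input \eqref{eq:lifted-input-z} is an open-loop signal: once the sampled state $z_{t_k}$ is fixed, the vector $\delta_k(z_{t_k})$ is a constant, and only the factor $C_t^\top$ varies in time. Since the drift has already been removed in the $z$-coordinates, with $z(t)=LF_tx(t)$ and $\dot z=C_tu$ as derived in Section~\ref{subsec:reachability_coordinates} (a derivation that uses only $\dot F_t=-F_tA(t)$, not full actuation), we can write
\[
z_{t_{k+1}}=z_{t_k}+\int_{t_k}^{t_{k+1}}C_s u(s)\,\rmd s .
\]

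Substituting the input gives
\[
z_{t_{k+1}}=z_{t_k}+\Big(\int_{t_k}^{t_{k+1}}C_sC_s^\top\,\rmd s\Big)\calG_k^{-1}\delta_k(z_{t_k})=z_{t_k}+\delta_k(z_{t_k}).
\]
By the definition $\delta_k(z)=\Xi_k(z)-z$, this equals $\Xi_k(z_{t_k})$. Assumption~\ref{ass:local_controllability} is used here to guarantee that $\calG_k^{-1}$ exists, so the input is well defined. We should also confirm that the input is square integrable on $J_k$ and that the state equation has a unique absolutely continuous solution. Both follow from continuity of $A$ and $B$, which makes $C_t$ continuous and bounded on $J_k$. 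Finiteness of $\delta_k(z_{t_k})$ only requires $\Xi_k$ to be a well-defined Borel map, and this holds for the monotone one-dimensional maps (and their spherical average) considered above.

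For the law statement, the identity $z_{t_{k+1}}=\Xi_k(z_{t_k})$ holds pathwise for every realization of $z_{t_k}$. This is because $\Xi_k$ is built from $\widehat\rho_k$ and $\widehat\rho_1$, which are deterministic objects given the direction history, and not from the individual sample. Hence the law of $z_{t_{k+1}}$ is the pushforward of the law of $z_{t_k}$ under $\Xi_k$, that is, $\widehat\rho_{k+1}=(\Xi_k)_\#\widehat\rho_k$.

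The main point to handle carefully, rather than a true obstacle, is measurability. We need $\Xi_k$ to be Borel, so that the pushforward makes sense. For $\Xi_k$ built from the monotone map $\widehat\calT_k^{\theta_k}$ this is immediate, since monotone functions are Borel. For the averaged field $\widehat g_k$, measurability follows by Fubini, given the joint measurability in $(\theta,z)$ of the quantile-based maps. Because $\widehat\calT_k^{\theta_k}$ is determined only $\widehat\rho_{k,\theta}$-almost everywhere, we fix a Borel version and note that the pushforward does not depend on which version is chosen.
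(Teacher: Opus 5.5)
Your proof is correct and is the same argument as the paper's: integrate $\dot z(t)=C_tu(t)$ over $J_k$, note that $\delta_k(z_{t_k})$ is constant on the interval, so the integral collapses to $\calG_k\calG_k^{-1}\delta_k(z_{t_k})=\delta_k(z_{t_k})$, and obtain $\widehat\rho_{k+1}=(\Xi_k)_\#\widehat\rho_k$ from the pathwise identity. Your remarks on well-posedness of the input and on choosing a Borel version of $\Xi_k$ are sound and add care that the paper leaves implicit.
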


\begin{proof}
Over the interval $J_k$, the transformed dynamics satisfy $\dot z(t)=C_tu(t)$. Therefore,
\begin{align*}
z_{t_{k+1}}
=
z_{t_k} \!
+ \! \! \int_{t_k}^{t_{k+1}} \!\!\!\! C_su(s)\rmd s =
z_{t_k}
+
\int_{t_k}^{t_{k+1}} \!\!\!\! C_sC_s^\top\rmd s\,
\calG_k^{-1}\delta_k(z_{t_k})=
z_{t_k}+\delta_k(z_{t_k})
=
\Xi_k(z_{t_k}).
\end{align*}
The law update is thus $\widehat\rho_{k+1}=(\Xi_k)_\#\widehat\rho_k$.
\end{proof}

Proposition~\ref{prop:finite_step_exact} shows that, at the sampling instants, the transformed law follows exactly the virtual sliced iteration
$\widehat\rho_{k+1}=(\Xi_k)_\#\widehat\rho_k$.
Thus, the linear dynamics introduce no additional approximation at these instants. Any approximation arises solely from the choice of the sliced update $\Xi_k$.

\subsection{Algorithmic form}
The finite-step construction is summarized in Algorithm~\ref{alg:finite_step_sliced}.
\begin{algorithm}[htb!]
\caption{Finite-step sliced steering}
\label{alg:finite_step_sliced}
\begin{algorithmic}[1]
\Require Initial law $\rho_0$ and target law $\rho_1$
\Require Partition $0=t_0<t_1<\cdots<t_N=1$
\For{$k=0,\ldots,N-1$}
    \State Compute $z_{t_k}=LF_{t_k}x(t_k)$ and  $\widehat\rho_k=(LF_{t_k})_\#\rho_{t_k}$
    \State Construct sliced update map $\Xi_k$ from
    $\widehat\rho_k$ to $\widehat\rho_1$
    \State Compute   
    $\calG_k=\int_{t_k}^{t_{k+1}} LF_tB(t)B(t)^\top F_t^\top L^\top \rmd t$
    \State Apply, for $t\in[t_k,t_{k+1})$,
    $u(t)
    =
    B(t)^\top F_t^\top L^\top
    \calG_k^{-1}
    \left(
    \Xi_k(z_{t_k})-z_{t_k}
    \right)$
\EndFor
\end{algorithmic}
\end{algorithm}

This procedure is the finite-step counterpart of the iterative sliced controller for a general controllable linear system. At each sampling instant, projected transport maps are recomputed from the current transformed law, and the resulting sliced displacement is realized exactly over the following interval. The construction is intrinsically tied to a finite partition and should not be interpreted as a continuous-time extension of the fully actuated result. When $B(t)$ is rank deficient, the matrix $Q_t=C_tC_t^\top$ is singular. Although the local Gramian $\calG_k$ may remain positive definite on every interval, it can become increasingly ill-conditioned as the interval length decreases. Consequently, refining the partition does not, in general, produce a finite-energy continuous-time limit.

\section{Numerical Experiments}

All numerical experiments were implemented in MATLAB R2025b and performed on a MacBook equipped with an Apple M1 Pro chip and 16 GB of RAM. The experiments are designed to illustrate two aspects of the proposed framework: the distribution steering capability of the randomized sliced controller and its realization through linear dynamical systems.

\subsection{Example 1: Gaussian mixtures}

We first illustrate the randomized sliced controller on a non-Gaussian distribution-steering problem in $\bbR^2$. The initial and target laws are three-component Gaussian mixtures, $\rho_i=\sum_{j=1}^{3}w_{i,j}\mathcal N(m_{i,j},\Sigma_{i,j})$ for $i\in\{0,1\}$, with weights $w_0=(0.45,0.35,0.20)$ and $w_1=(0.40,0.30,0.30)$. The component means are given by the columns of
\begin{equation*}
M_0=
\begin{bmatrix}
-2.8 & -0.9 & -2.2\\
0.2 &  2.4 & -2.0
\end{bmatrix},
\quad
M_1=
\begin{bmatrix}
1.2 &  3.0 &  1.7\\
1.6 & -0.3 & -2.0
\end{bmatrix},
\end{equation*}
that is, $M_i=[m_{i,1},m_{i,2},m_{i,3}]$, where each $m_{i,j}\in\bbR^2$ is a column vector.

The corresponding covariance matrices are
\begin{equation*}
\begin{aligned}
\{\Sigma_{0,j}\}_{j=1}^{3}
&=
\left\{
\begin{bmatrix}
0.35 & 0.10\\
0.10 & 0.25
\end{bmatrix},
\begin{bmatrix}
0.20 & -0.06\\
-0.06 & 0.35
\end{bmatrix},
\begin{bmatrix}
0.22 & 0\\
0 & 0.18
\end{bmatrix}
\right\},\\
\{\Sigma_{1,j}\}_{j=1}^{3}
&=
\left\{
\begin{bmatrix}
0.25 & -0.05\\
-0.05 & 0.20
\end{bmatrix},
\begin{bmatrix}
0.30 & 0.09\\
0.09 & 0.25
\end{bmatrix},
\begin{bmatrix}
0.18 & 0\\
0 & 0.32
\end{bmatrix}
\right\}.
\end{aligned}
\end{equation*}

Both laws are represented by $N=10^4$ particles, and the target samples are drawn once and fixed throughout the experiment. At iteration $k$, a direction $\theta_k$ is sampled uniformly from $\mathcal S^1$. The empirical one-dimensional transport map is obtained by projecting the samples onto $\theta_k$, sorting the projected values, and matching them by rank. Using the remaining-horizon gain $\lambda(t)=(1-t)^{-1}$, we use the logarithmically refined grid $t_k=1-\Ee^{-0.008k}$ for $k=0,\ldots,2250$.

Figure~\ref{fig:ex1-traj} shows representative particle trajectories and density contours during the controlled evolution. The initial mixture is continuously deformed toward the target law, with mass originating from one component potentially redistributed among several target components.

\begin{figure}[htb!]
\centering
\includegraphics[width=0.75\linewidth,trim={1cm 0 1cm 0},clip]
{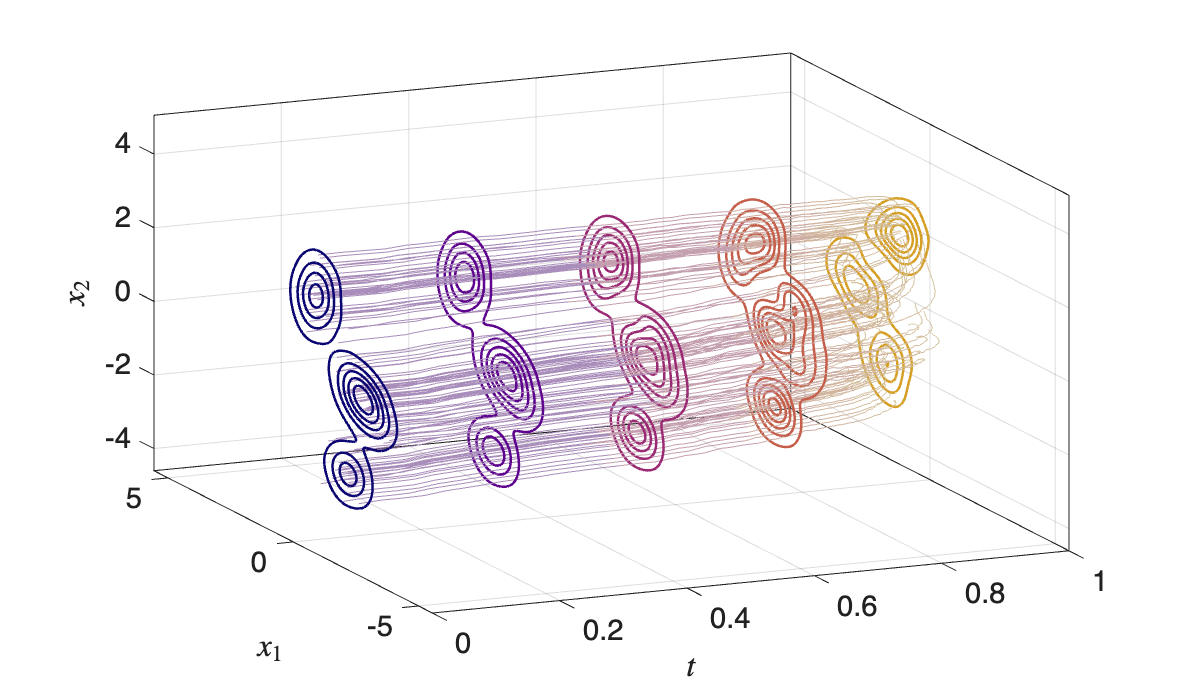}
\caption{Randomized sliced steering between two Gaussian mixtures.
Thin curves show $120$ representative particle trajectories, while
contours show the controlled density at
$t=0,0.25,0.5,0.75,1$. Mass originating from one initial component
may be distributed among several target components.}
\label{fig:ex1-traj}
\end{figure}

Figure~\ref{fig:ex1-sw} shows the empirical sliced Wasserstein distance to the target, estimated using $96$ projection directions. Its decay is consistent with the dissipation property in Proposition~\ref{prop:time_derivative}. Near the terminal time, the remaining discrepancy is comparable to the empirical discrepancy between two independent samples from the target law.

\begin{figure}[htb!]
\centering
\includegraphics[width=0.7\linewidth]
{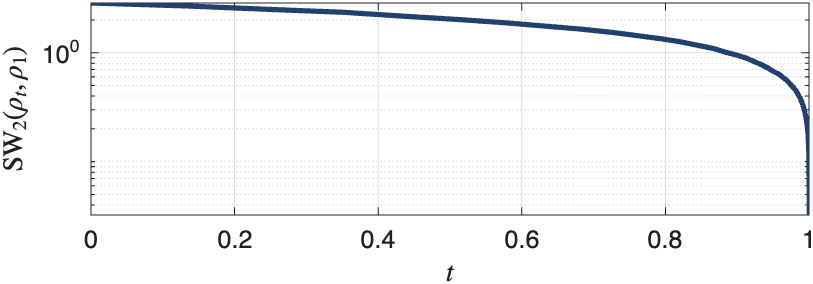}
\caption{Empirical sliced Wasserstein distance to the target over time,
estimated using $96$ projection directions. The final computed point
satisfies $1-t_{2250}\approx1.5\times10^{-8}$ and is identified with the terminal
time.}
\label{fig:ex1-sw}
\end{figure}

\subsection{Example 2: Color-distribution steering through fully actuated dynamics}
\label{subsec:color_transfer}

We next illustrate the realization of sliced distribution steering through fully actuated dynamics in a color-transfer problem. The initial and target laws are extracted from two oil paintings, a classical application of optimal transport in imaging (see, e.g., \cite{Peyre2019computational,solomon2015convolutional}). The initial image is Monet's \emph{Haystack}, while the target color distribution is extracted from \emph{Water Lilies}. This example demonstrates the fully actuated realization developed in Section~\ref{sec:full_actuated} and verifies the associated energy bounds.

Each pixel is represented by its CIELAB color vector $x=[L^\ast,a^\ast,b^\ast]^\top\in\bbR^3$. The spatial locations of pixels are fixed, and only the color distribution evolves. Thus, the problem concerns color-distribution steering rather than spatial deformation of the image. For numerical conditioning, we use the normalized coordinates $x= [(L^\ast-50)/50,\ a^\ast/110,\ b^\ast/110]^\top$. Both images are resized so that their largest dimension is $160$ pixels. All pixels of the initial image are used as particles, and the target quantiles are computed from the complete set of target-image pixels.

The color particles evolve according to the fully actuated linear system
\eqref{eq:ltv}, with
$$
A=
\begin{bmatrix}
-0.08 & -0.22 & 0.06\\
 0.18 & -0.06 & 0.03\\
 0.02 & -0.05 & -0.10
\end{bmatrix}$$
and
\[
B(t)=
\begin{bmatrix}
1.10+0.12\sin(2\pi t) & 0.05\cos(2\pi t) & 0\\
0.03\sin(2\pi t) & 0.95+0.10\cos(2\pi t) & 0.04\sin(4\pi t)\\
0.02\cos(2\pi t) & 0.03\sin(2\pi t) & 1.05+0.08\sin(4\pi t)
\end{bmatrix}.
\]
The matrix $B(t)$ has full row rank on $[0,1]$, and hence Assumption~\ref{ass:full_actuation} holds. The reachability Gramian $G_{10}$ is evaluated by the trapezoidal rule using $2001$ quadrature points. The matrices $L=G_{10}^{-1/2}$ and $Q_t$ are then computed as in Section~\ref{sec:full_actuated}, giving the uniform actuation bounds $q_-=0.7815$ and $q_+=1.2852$.

The interval $[0,1]$ is divided into $60$ uniform steps. At each step, the spherical average in the sliced controller is approximated using $32$ fixed directions on $\calS^2$, distributed approximately uniformly by a Fibonacci lattice. The projected optimal transport maps are obtained by sorting the projected color samples and matching empirical quantiles. The minimum-norm input $u=C_t^\top Q_t^{-1}\dot z$ then realizes the sliced velocity through the physical dynamics. Since the projection directions are fixed, the implementation is deterministic.

Figure~\ref{fig:color_setup} shows the two paintings and their empirical color distributions in the $(a^\ast,b^\ast)$ plane. The initial palette is concentrated around pale blue, gray, and ochre tones, whereas the target distribution contains stronger purple, green, and pink components. The controller moves the color distribution toward the target while preserving the spatial arrangement of the initial image.

\begin{figure}[htb!]
\centering
\includegraphics[width=.85\linewidth]{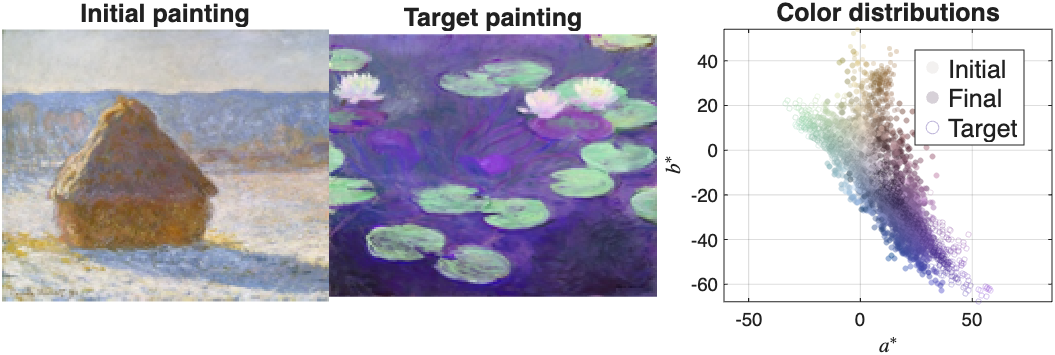}
\caption{Initial and target paintings and their empirical color distributions
in the $(a^\ast,b^\ast)$ plane. The initial painting provides the spatial
structure and initial color distribution, while the target painting provides
the desired color distribution.}
\label{fig:color_setup}
\end{figure}

Figure~\ref{fig:color_evolution} shows the resulting evolution at six equally spaced times. The pale palette of the initial painting is gradually replaced by the purple, green, and blue tones of the target. Since the controller acts only on color coordinates, the spatial structure remains unchanged. Over the horizon, the sliced distance to the target decreases from $0.299$ to $0.066$.

\begin{figure}[htb!]
\centering
\includegraphics[width=0.85\linewidth]{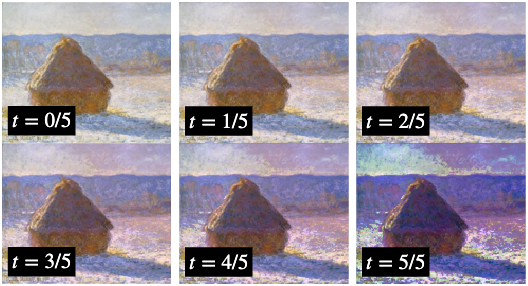}
\caption{Color-distribution steering through fully actuated linear dynamics.
Snapshots are shown at $t=0/5,1/5,\ldots,5/5$. The spatial structure of the
initial painting is preserved, while its color distribution is steered toward
that of the target painting.}
\label{fig:color_evolution}
\end{figure}

We next verify the energy characterization associated with the reachability normalization. Figure~\ref{fig:color_energy_bound} shows the accumulated input energy $\calE_u(0,t)$ together with the bounds obtained from the transformed energy $\calE_z(0,t)$. Throughout the horizon,
\[
\frac{\calE_z(0,t)}{q_+}
\leq
\calE_u(0,t)
\leq
\frac{\calE_z(0,t)}{q_-}.
\]
Thus, the physical energy required to realize the sliced flow through the
original dynamics is controlled by the actuation bounds in
\eqref{eq:q_bounds}.

\begin{figure}[htb!]
\centering
\includegraphics[width=0.85\linewidth]{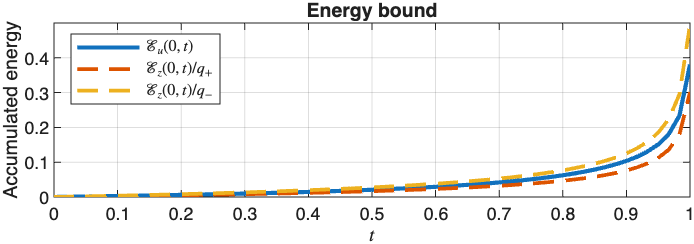}
\caption{Accumulated physical input energy and the corresponding lower and
upper bounds. The physical energy $\calE_u(0,t)$ remains between
$\calE_z(0,t)/q_+$ and $\calE_z(0,t)/q_-$ throughout the steering horizon.}
\label{fig:color_energy_bound}
\end{figure}

Finally, we compare the sliced controller with classical optimal transport as an energy benchmark. The exact optimal transport map between the transformed endpoint distributions gives the minimum-energy displacement interpolation. Since its computation requires solving an assignment problem over an $N\times N$ cost matrix, the comparison is performed on a common subsample of $N=1500$ pixels from each image. On this subsample, the physical control energy of the sliced controller is $1.31$ times the optimal transport minimum, indicating a moderate loss in energy optimality.

\begin{remark}[\bf Computational Complexity]\label{rem:computational_scaling}
Let $N$ denote the number of particles, $N_{\rm dir}$ the number of projection directions, and $K$ the number of time steps. A sliced update requires $O(N_{\rm dir}N\log N)$ operations and $O(N_{\rm dir}N)$ memory, leading to $O(KN_{\rm dir}N\log N)$ operations over the full horizon. In contrast, exact transport between equally weighted samples requires a dense $N\times N$ cost matrix and an assignment solve, with $\Theta(N^2)$ memory and cubic worst-case complexity.
\end{remark}

For Figure~\ref{fig:color_complexity}, we use $N_{\rm dir}=32$ and $K=60$. One exact map at $N=2000$ takes $15.3$ seconds, whereas the complete sliced run at the full image resolution $N=20{,}480$ takes $1.1$ seconds. At full resolution, the dense cost matrix alone requires approximately $3.4$ GB, compared with about $5.2$ MB for the projected arrays. These timings compare one exact endpoint map with a complete feedback run and illustrate their different scaling behavior.

\begin{figure}[htb!]
\centering
\includegraphics[width=0.85\linewidth]{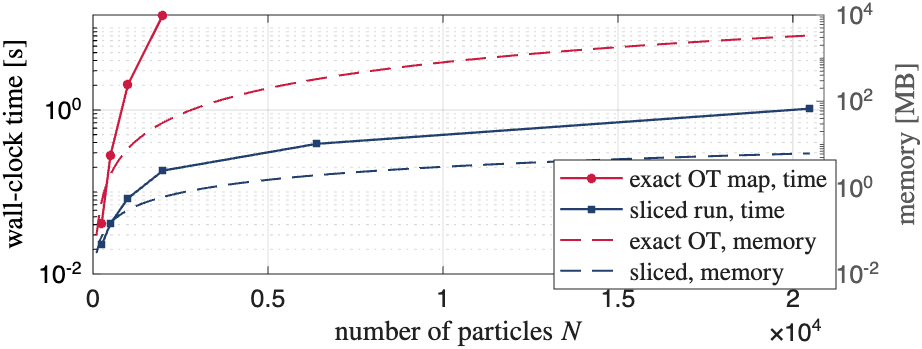}
\caption{Runtime and memory scaling of sliced and exact optimal transport. Solid lines show measured times for one exact endpoint map and a complete $60$-step sliced run. Dashed lines show the storage required by the dense $N\times N$ cost matrix and the $N_{\rm dir}\times N$ projected arrays. Exact transport becomes impractical beyond a few thousand particles, while the sliced controller extends to the full image resolution.}
\label{fig:color_complexity}
\end{figure}

\section{Conclusion}\label{sec:conclusion}
We developed a finite-horizon framework for distribution steering based on one-dimensional optimal transport along linear projections. A sampled projection direction defines a scalar endpoint-control problem, whose minimum-energy realization yields the iterative sliced controller. For the single-integrator dynamics, its averaged flow decreases the sliced Wasserstein discrepancy, admits a Gaussian specialization, and satisfies an explicit energy characterization. We further proved that the randomized controller converges to the averaged sliced flow as the sampling period vanishes. The construction was extended to linear dynamical systems. Reachability-normalized coordinates enable instantaneous realization of the sliced velocity for uniformly fully actuated systems, while local controllability provides exact finite-step realization for general controllable systems. These results show that projected transport maps can provide a tractable and dynamically realizable alternative to full-dimensional optimal transport. Future directions include adaptive projection selection, partial observations, and stochastic, constrained, or nonlinear dynamics.

\section*{Acknowledgment}

This work was supported in part by JSPS KAKENHI Grant Number
JP24K17297, JST ASPIRE Grant Number JPMJAP2402, the Swedish Research
Council Distinguished Professor Grant 2017-01078, and the Knut and Alice
Wallenberg Foundation Wallenberg Scholar Grant.

\bibliographystyle{alpha}
\bibliography{arXiv}

@Inproceedings{Thurin2026,
  title={Convergence Rates for Distribution Matching with Sliced Optimal Transport},
  author={Thurin, Gauthier and Boyer, Claire and Nadjahi, Kimia},
  booktitle={Proceedings of the 39th Annual Conference on Learning Theory},
  pages={6156--6196},
  year={2026},
  publisher={PMLR}
}

@article{Li2023measure,
  title={Measure transfer via stochastic slicing and matching},
  author={Li, Shiying and Moosmueller, Caroline and Wang, Yongzhe},
  journal={arXiv preprint arXiv:2307.05705},
  year={2023}
}

@inproceedings{Ito2026sliced,
  title={Sliced {W}asserstein steering between {G}aussian measures},
  author={Ito, Kaito and Dong, Anqi},
  booktitle={2026 European Control Conference (ECC)},
  year={2026},
  organization={IEEE},
}

@book{Teschl2012ordinary,
  title={Ordinary {D}ifferential {E}quations and {D}ynamical {S}ystems},
  author={Teschl, Gerald},
  year={2012},
  publisher={American Mathematical Soc.}
}

@book{Rachev1998mass,
  title={Mass {T}ransportation {P}roblems: {V}olume {I}: {T}heory},
  author={Rachev, Svetlozar T. and R{\"u}schendorf, Ludger},
  year={1998},
  publisher={Springer}
}

@book{Villani2003,
  title={{Topics in {O}ptimal {T}ransportation}},
  author={Villani, C{\'e}dric},
  volume={58},
  year={2003},
  publisher={American Mathematical Soc.}
}

@article{Peyre2019computational,
  title={Computational optimal transport: {W}ith applications to data science},
  author={Peyr{\'e}, Gabriel and Cuturi, Marco},
  journal={Foundations and Trends{\textregistered} in Machine Learning},
  volume={11},
  number={5-6},
  pages={355--607},
  year={2019},
  publisher={Now Publishers, Inc.}
}

@article{paulin2020sliced,
  title={Sliced optimal transport sampling},
  author={Paulin, Lois and Bonneel, Nicolas and Coeurjolly, David and Iehl, Jean-Claude and Webanck, Antoine and Desbrun, Mathieu and Ostromoukhov, Victor},
  journal={ACM Trans. Graph.},
  volume={39},
  number={4},
  pages={99},
  year={2020}
}

@article{Ito2023lcss,
  title={Entropic Model Predictive Optimal Transport for Underactuated Linear Systems},
  author={Ito, Kaito and Kashima, Kenji},
  journal={IEEE Control Systems Letters},
  volume={7},
  pages={2761--2766},
  year={2023},
  publisher={IEEE}
}

@article{Cozzi2025,
  title={Long-time asymptotics of the sliced-{W}asserstein flow},
  author={Cozzi, Giacomo and Santambrogio, Filippo},
  journal={SIAM Journal on Imaging Sciences},
  volume={18},
  number={1},
  pages={1--19},
  year={2025},
  publisher={SIAM}
}

@inproceedings{Liutkus2019,
  title={Sliced-{W}asserstein flows: {N}onparametric generative modeling via optimal transport and diffusions},
  author={Liutkus, Antoine and Simsekli, Umut and Majewski, Szymon and Durmus, Alain and St{\"o}ter, Fabian-Robert},
  booktitle={International Conference on Machine Learning},
  pages={4104--4113},
  year={2019},
  organization={PMLR}
}

@inproceedings{Vauthier2025,
title={Towards Understanding Gradient Dynamics of the Sliced-{W}asserstein Distance via Critical Point Analysis},
author={Christophe Vauthier and Anna Korba and Quentin M{\'e}rigot},
booktitle={Forty-second International Conference on Machine Learning},
year={2025},
}

@article{Ito2023entropic,
  title={Entropic model predictive optimal transport over dynamical systems},
  author={Ito, Kaito and Kashima, Kenji},
  journal={Automatica},
  volume={152},
  pages={110980},
  year={2023},
  publisher={Elsevier}
}

@article{Pitie2007automated,
  title={Automated colour grading using colour distribution transfer},
  author={Piti{\'e}, Fran{\c{c}}ois and Kokaram, Anil C. and Dahyot, Rozenn},
  journal={Computer Vision and Image Understanding},
  volume={107},
  number={1-2},
  pages={123--137},
  year={2007},
  publisher={Elsevier}
}

@article{Chen2017optimal,
  title={Optimal transport over a linear dynamical system},
  author={Chen, Yongxin and Georgiou, Tryphon T. and Pavon, Michele},
  journal={IEEE Transactions on Automatic Control},
  volume={62},
  number={5},
  pages={2137--2152},
  year={2017},
  publisher={IEEE}
}

@article{Benamou2000,
  title={{A computational fluid mechanics solution to the {M}onge-{K}antorovich mass transfer problem}},
  author={Benamou, Jean-David and Brenier, Yann},
  journal={Numerische Mathematik},
  volume={84},
  number={3},
  pages={375--393},
  year={2000},
  publisher={Springer-Verlag Berlin/Heidelberg}
}

@book{Ambrosio2008,
  title={{Gradient {F}lows: in {M}etric {S}paces and in the {S}pace of {P}robability {M}easures}},
  author={Ambrosio, Luigi and Gigli, Nicola and Savar{\'e}, Giuseppe},
  year={2008},
  edition={Second},
  publisher={Springer}
}

@article{Chen2021optimal,
  title={Optimal transport in systems and control},
  author={Chen, Yongxin and Georgiou, Tryphon T. and Pavon, Michele},
  journal={Annual Review of Control, Robotics, and Autonomous Systems},
  volume={4},
  number={1},
  pages={89--113},
  year={2021},
  publisher={Annual Reviews}
}

@article{Chen2018optimal,
  title={Optimal transport for {G}aussian mixture models},
  author={Chen, Yongxin and Georgiou, Tryphon T. and Tannenbaum, Allen},
  journal={IEEE Access},
  volume={7},
  pages={6269--6278},
  year={2019},
  publisher={IEEE}
}

@book{Deans2007radon,
  title={The {R}adon {T}ransform and {S}ome of {I}ts {A}pplications},
  author={Deans, Stanley R.},
  year={2007},
  publisher={Courier Corporation}
}

@article{Monge1781memoire,
  title={M{\'e}moire sur la th{\'e}orie des d{\'e}blais et des remblais},
  author={Monge, Gaspard},
  journal={Mem. Math. Phys. Acad. Royale Sci.},
  pages={666--704},
  year={1781}
}

@inproceedings{Kantorovich1942translocation,
  title={On the translocation of masses},
  author={Kantorovich, Leonid V.},
  booktitle={Dokl. Akad. Nauk. USSR (NS)},
  volume={37},
  pages={199--201},
  year={1942}
}

@article{dong2024monge,
  title={Monge--{K}antorovich optimal transport through constrictions and flow-rate constraints},
  author={Dong, Anqi and Stephanovitch, Arthur and Georgiou, Tryphon T.},
  journal={Automatica},
  volume={160},
  pages={111448},
  year={2024},
  publisher={Elsevier}
}

@inproceedings{nguyen2023sliced,
  title={Sliced {W}asserstein estimation with control variates},
  author={Nguyen, Khai and Ho, Nhat},
  booktitle={International Conference on Learning Representations},
  pages={2087--2106},
  year={2024}
}

@article{bonneel2015sliced,
  title={Sliced and {R}adon {W}asserstein barycenters of measures},
  author={Bonneel, Nicolas and Rabin, Julien and Peyr{\'e}, Gabriel and Pfister, Hanspeter},
  journal={Journal of Mathematical Imaging and Vision},
  volume={51},
  number={1},
  pages={22--45},
  year={2015},
  publisher={Springer}
}

@book{villani2009optimal,
  title={Optimal {T}ransport: {O}ld and {N}ew},
  author={Villani, C{\'e}dric},
  volume={338},
  year={2009},
  publisher={Springer}
}

@article{chen2015optimal1,
  title={Optimal steering of a linear stochastic system to a final probability distribution, {P}art {I}},
  author={Chen, Yongxin and Georgiou, Tryphon T. and Pavon, Michele},
  journal={IEEE Transactions on Automatic Control},
  volume={61},
  number={5},
  pages={1158--1169},
  year={2016},
  publisher={IEEE}
}

@article{chen2015optimal2,
  title={Optimal steering of a linear stochastic system to a final probability distribution, {P}art {II}},
  author={Chen, Yongxin and Georgiou, Tryphon T. and Pavon, Michele},
  journal={IEEE Transactions on Automatic Control},
  volume={61},
  number={5},
  pages={1170--1180},
  year={2016},
  publisher={IEEE}
}

@article{benamou2015iterative,
  title={Iterative {B}regman projections for regularized transportation problems},
  author={Benamou, Jean-David and Carlier, Guillaume and Cuturi, Marco and Nenna, Luca and Peyr{\'e}, Gabriel},
  journal={SIAM Journal on Scientific Computing},
  volume={37},
  number={2},
  pages={A1111--A1138},
  year={2015},
  publisher={SIAM}
}

@inproceedings{cuturi2014fast,
  title={Fast computation of {W}asserstein barycenters},
  author={Cuturi, Marco and Doucet, Arnaud},
  booktitle={International Conference on Machine Learning},
  pages={685--693},
  year={2014},
  organization={PMLR}
}

@article{stephanovitch2025optimal,
  title={Optimal transport through a toll station},
  author={Stephanovitch, Arthur and Dong, Anqi and Georgiou, Tryphon T.},
  journal={European Journal of Applied Mathematics},
  volume={36},
  number={3},
  pages={613--637},
  year={2025},
  publisher={Cambridge University Press}
}

@article{dong2025data,
  title={Data {A}ssimilation for {S}ign-indefinite {P}riors: {A} generalization of {S}inkhorn’s algorithm},
  author={Dong, Anqi and Georgiou, Tryphon T. and Tannenbaum, Allen},
  journal={Automatica},
  volume={177},
  pages={112283},
  year={2025},
  publisher={Elsevier}
}

@article{ito2024maximum,
  title={Maximum entropy density control of discrete-time linear systems with quadratic cost},
  author={Ito, Kaito and Kashima, Kenji},
  journal={IEEE Transactions on Automatic Control},
  volume={70},
  number={5},
  pages={3024--3039},
  year={2025},
  publisher={IEEE}
}

@article{solomon2015convolutional,
  title={Convolutional {W}asserstein distances: {E}fficient optimal transportation on geometric domains},
  author={Solomon, Justin and De Goes, Fernando and Peyr{\'e}, Gabriel and Cuturi, Marco and Butscher, Adrian and Nguyen, Andy and Du, Tao and Guibas, Leonidas},
  journal={ACM Transactions on Graphics (ToG)},
  volume={34},
  number={4},
  pages={1--11},
  year={2015},
  publisher={ACM New York, NY, USA}
}

@phdthesis{bonnotte2013unidimensional,
  title={Unidimensional and evolution methods for optimal transportation},
  author={Bonnotte, Nicolas},
  year={2013},
  school={Universit{\'e} Paris Sud-Paris XI; Scuola normale superiore (Pise, Italie)}
}

\newpage
\appendix
\section{Proofs in Section \ref{sec:single_integrator}}

\subsection{Proof of Proposition~\ref{prop:time_derivative}}
\label{app:time_derivative}

Fix $\bar t\in(0,1)$ and set, for almost every $t\in(0,\bar t)$,
\[
a(t)
:=
\left(
\int_{\bbR^n}
\|v(t,x)\|^2\,\rho_t(\rmd x)
\right)^{1/2}.
\]
By the continuity equation, the assumed integrability of $v$, and
\cite[Theorem~8.3.1]{Ambrosio2008}, the curve
$t\mapsto\rho_t$ is absolutely continuous in
$(\calP_2(\bbR^n),W_2)$ on $[0,\bar t]$, with
$|\rho_t'| := \lim_{s\rightarrow t} W_2 (\rho_s, \rho_t) /|s-t| \leq a(t)$ for almost every $t$. Hence,
\[
W_2(\rho_s,\rho_t)
\leq
\int_s^t a(\tau)\,\rmd\tau,
\qquad
0\leq s\leq t\leq\bar t.
\]
Using the triangle inequality for $SW_2$ and
$SW_2(\mu,\nu)\leq n^{-1/2}W_2(\mu,\nu)$, we obtain
$|r(t)-r(s)|
\leq
\frac{1}{\sqrt n}
\int_s^t a(\tau)\,\rmd\tau$.
Thus, $r(t)=SW_2(\rho_t,\rho_1)$ is absolutely continuous on
$[0,\bar t]$.

Fix $\theta\in\calS^{n-1}$ and define the projected velocity by
\[
w_{t,\theta}(y)
:=
\bbE\! \left[
\theta^\top v(t,X_t)
\,\middle|\,
\theta^\top X_t=y
\right],
\qquad
X_t\sim\rho_t.
\]
Jensen's inequality gives
\begin{equation}\label{eq:proj-velocity-short}
\int_{\bbR}
|w_{t,\theta}(y)|^2
\,\rho_{t,\theta}(\rmd y)
\leq
\int_{\bbR^n}
|\theta^\top v(t,x)|^2
\,\rho_t(\rmd x).
\end{equation}
For
$\varphi\in C_c^\infty((0,1)\times\bbR)$, substitute
$\zeta(t,x)=\varphi(t,\theta^\top x)$ into
\eqref{eq:dist_continuity}. This substitution is admissible by
\cite[Remark~8.1.1]{Ambrosio2008} and shows that
$(\rho_{t,\theta},w_{t,\theta})$ satisfies
$\partial_t\rho_{t,\theta}+\partial_y \! \left(w_{t,\theta}\rho_{t,\theta}\right)=0$ weakly.

It follows from \eqref{eq:proj-velocity-short} and
\cite[Theorem~8.3.1]{Ambrosio2008} that
$t\mapsto\rho_{t,\theta}$ is locally absolutely continuous in
$(\calP_2(\bbR),W_2)$. Since $\rho_t$ admits a density,
$\rho_{t,\theta}$ is absolutely continuous and hence atomless.
The optimal transport from $\rho_{t,\theta}$ to
$\rho_{1,\theta}$ is therefore induced by the monotone map
$\calT_t^\theta$. By
\cite[Theorem~8.4.7, Remark~8.4.8]{Ambrosio2008}, for almost every
$t\in(0,\bar t)$,
\begin{align}
\frac{\rmd}{\rmd t}
W_2^2(\rho_{t,\theta},\rho_{1,\theta})
=
2\int_{\bbR}
\left(
y-\calT_t^\theta(y)
\right)
w_{t,\theta}(y)
\,\rho_{t,\theta}(\rmd y)
=
2\int_{\bbR^n} \!
\left(
\theta^\top x-\calT_t^\theta(\theta^\top x)
\right)
\theta^\top v(t,x)
\,\rho_t(\rmd x).
\label{eq:projected-W2-derivative}
\end{align}

It remains to average this identity over the projection directions.
Since $t\mapsto\rho_t$ is continuous in $W_2$ on $[0,\bar t]$,
\[
C_{\bar t}
:=
\sup_{0\leq t\leq\bar t}
W_2(\rho_t,\rho_1)
<
\infty.
\]
Linear projections are nonexpansive, and hence
$W_2(\rho_{t,\theta},\rho_{1,\theta})\leq C_{\bar t}$.
By the Cauchy--Schwarz inequality, Eq.~\eqref{eq:projected-W2-derivative},
and
$\int_{\calS^{n-1}}|\theta^\top z|^2
\,\sigma(\rmd\theta)=\|z\|^2/n$,
\[
\int_{\calS^{n-1}}
\left|
\frac{\rmd}{\rmd t}
W_2^2(\rho_{t,\theta},\rho_{1,\theta})
\right|
\,\sigma(\rmd\theta)
\leq
\frac{2C_{\bar t}}{\sqrt n}\,a(t).
\]
The right-hand side is integrable on $(0,\bar t)$. Fubini's theorem
and the fundamental theorem of calculus therefore give, for almost
every $t\in(0,\bar t)$,
\[
\frac{\rmd}{\rmd t}
SW_2^2(\rho_t,\rho_1)
=
\int_{\calS^{n-1}}
\frac{\rmd}{\rmd t}
W_2^2(\rho_{t,\theta},\rho_{1,\theta})
\,\sigma(\rmd\theta).
\]
Substituting \eqref{eq:projected-W2-derivative} and using
\eqref{eq:sliced_discrepancy} yields
\[
\frac{\rmd}{\rmd t}
SW_2^2(\rho_t,\rho_1)
=
2\int_{\bbR^n}
g_t(x)^\top v(t,x)
\,\rho_t(\rmd x).
\]
Since $\bar t<1$ was arbitrary, the result holds locally on
$[0,1)$.

\subsection{Proof of Proposition~\ref{prop:linearlity_Gaussian}}\label{app:linearity_Gaussian}

We first establish that the covariance equation is well-posed on
$[0,1)$. Let $\bbS_{>0}^n$ denote the cone of real symmetric
positive-definite matrices. The vector field associated with
\eqref{eq:cov_eq} is
\[
F(t,\Sigma)
:=
\lambda(t)
\bigl(
H(\Sigma)\Sigma+\Sigma H(\Sigma)
\bigr).
\]
On every compact subset of $\bbS_{>0}^n$, the quantity
$\theta^\top\Sigma\theta$ is bounded uniformly away from zero. It follows
that $\alpha(\Sigma,\theta)$, and hence $H(\Sigma)$, is locally Lipschitz
in $\Sigma$, uniformly in $\theta$. Since $\lambda$ is continuous,
$F$ is locally Lipschitz in $\Sigma$, locally uniformly in time.
Therefore, Eq. \eqref{eq:cov_eq} along with $\Sigma(0)=\Sigma_0$ admits a unique
maximal solution on some interval $[0,t_{\max})$.

We next show that the solution cannot leave $\bbS_{>0}^n$ in finite time.
By the definition of $H$,
\[
H(\Sigma)+\frac{1}{n}I
=
\int_{\calS^{n-1}}
\alpha(\Sigma,\theta)\theta\theta^\top
\,\sigma(\rmd\theta)
\succeq0,
\]
and hence $H(\Sigma)\succeq-I/n$. Let
$\ell(t):=\lambda_{\min}(\Sigma(t))$. The function $\ell$ is locally
Lipschitz and therefore differentiable almost everywhere. At every such
time, choosing a unit eigenvector $q$ associated with $\ell(t)$ gives
\[
\begin{aligned}
\dot\ell(t)
=
q^\top\dot\Sigma(t)q=
2\lambda(t)\ell(t)
q^\top H(\Sigma(t))q\geq
-\frac{2}{n}\lambda(t)\ell(t).
\end{aligned}
\]
Gronwall's inequality yields
\begin{equation}\label{eq:sigma_positive}
\lambda_{\min}(\Sigma(t))
\geq
\lambda_{\min}(\Sigma_0)
\exp\left(
-\frac{2}{n}\int_0^t\lambda(s)\,\rmd s
\right)
>0.
\end{equation}

Suppose, by contradiction, that $t_{\max}<1$. Since $\lambda$ is
continuous, Eq.~\eqref{eq:sigma_positive} gives
$\Sigma(t)\succeq a_*I$ on $[0,t_{\max})$ for some $a_*>0$. Consequently,
\[
\alpha(\Sigma(t),\theta)
\leq
\sqrt{
\frac{\lambda_{\max}(\Sigma_1)}{a_*}
},
\qquad
t<t_{\max},
\]
and $H(\Sigma(t))$ is uniformly bounded on $[0,t_{\max})$. Thus, for some $c_*<\infty$ and $t<t_{\max}$, we have 
$$\|\dot\Sigma(t)\|_2 \leq 2c_*\lambda(t)\|\Sigma(t)\|_2.$$
Another application of Gronwall's inequality shows that
$\|\Sigma(t)\|_2\leq b_*$ on $[0,t_{\max})$ for some $b_*<\infty$.

The solution therefore remains in the compact subset
$$
\left\{
\Sigma\in\bbS_{>0}^n
\ \middle|\
a_*I\preceq\Sigma\preceq b_*I
\right\}.$$
This contradicts the continuation theorem for ordinary differential
equations \cite[Section~2.6]{Teschl2012ordinary}. Hence,
Eq.~\eqref{eq:cov_eq} admits a unique solution satisfying
$\Sigma(t)\succ0$ throughout $[0,1)$.

Since $\Sigma$ and $\lambda$ are continuous, so are
$K(t,\Sigma(t))$ and $\eta(t)$. The affine equation
\eqref{eq:X_affine}, with initial law
$X(0)\sim\calN(m_0,\Sigma_0)$, therefore admits a unique solution on
$[0,1)$. The variation-of-constants formula shows that $X(t)$ remains
Gaussian. Its mean satisfies
\begin{equation}\label{eq:m_ode}
\dot m(t)
=
-\frac{\lambda(t)}{n}
\bigl(m(t)-m_1\bigr),
\end{equation}
with $m(0)=m_0$, while its covariance satisfies \eqref{eq:cov_eq} with
initial value $\Sigma_0$. The solution of \eqref{eq:m_ode} is
\eqref{eq:m_solution}. Uniqueness of the mean and covariance equations
therefore gives
\[
X(t)\sim\calN\bigl(m(t),\Sigma(t)\bigr),
\qquad
t\in[0,1).
\]

It remains to identify the affine drift with the averaged sliced feedback.
For the Gaussian law
$\rho_t=\calN(\,\cdot\mid m(t),\Sigma(t))$, the projected transport map is
\[
\calT_t^\theta(\theta^\top x)
=
\theta^\top m_1
+
\alpha(\Sigma(t),\theta)
\theta^\top\bigl(x-m(t)\bigr).
\]
Substitution into Eq.~\eqref{eq:sliced_discrepancy}, together with the
definitions of $H$, $K$, and $\eta$, gives
\[
-\lambda(t)g_t(x)
=
K(t,\Sigma(t))x+\eta(t).
\]
Thus, Eq.~\eqref{eq:X_affine} coincides with the averaged sliced feedback.

\subsection{A Gaussian coercivity estimate}
\label{app:harmonic}

For $\Sigma\succ0$, define
\begin{equation}\label{eq:V_def}
V(\Sigma)
:=
\int_{\calS^{n-1}}
\! \left(
\sqrt{\theta^\top\Sigma\theta}
-
\sqrt{\theta^\top\Sigma_1\theta}
\right)^2
\,\sigma(\rmd\theta).
\end{equation}

\begin{lemma}[\bf Gaussian coercivity and directional consistency]
\label{lem:chi_gaussian}
Let
$\rho_t=\calN(\,\cdot\mid m(t),\Sigma(t))$
be a Gaussian averaged sliced flow generated by a nonnegative gain $\lambda$. Set
\[
\beta
:=
n \Big(
\sqrt{\frac{\trace(\Sigma_1)}{n}}
+
r(0)
\Big)^2
\]
and
\begin{equation}\label{eq:mu_def}
\mu
:=
\frac{
\sqrt{\lambda_{\min}(\Sigma_1)}
}{
\beta^{3/2}n(n+2)
}.
\end{equation}
Then, for every $t\in[0,1)$, we have 
\begin{equation}\label{eq:V-strong-lower}
\Sigma(t)\preceq\beta I \; \mbox{ and }\;
V(\Sigma(t))
\geq
\frac{\mu}{2}
\|\Sigma(t)-\Sigma_1\|_{\rm F}^2.
\end{equation}
If, in addition, $\Sigma(t)\succeq\alpha I$ on $[0,1)$ for some
$\alpha>0$, then  $\chi_*\leq\chi(t)\leq1$
whenever $r(t)>0$, where $\chi_*$ is the constant in
Theorem~\ref{thm:convergence}.
\end{lemma}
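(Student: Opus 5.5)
The plan is to use the closed-form Wasserstein distance between one-dimensional Gaussians, write every quantity in terms of $m(t)$, $\Sigma(t)$ and the spherical moments, and carry along the monotonicity $r(t)\le r(0)$. That monotonicity follows from \eqref{eq:dissipation_general}, since $\lambda\ge0$.

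\emph{Step 1: decomposition of $r$.} For Gaussian projections, $W_2^2(\rho_{t,\theta},\rho_{1,\theta})=(\theta^\top(m-m_1))^2+(\sqrt{\theta^\top\Sigma\theta}-\sqrt{\theta^\top\Sigma_1\theta})^2$. Averaging over $\theta$ with $\int\theta\theta^\top\sigma(\rmd\theta)=I/n$ gives
\[
r(t)^2=\tfrac1n\|m(t)-m_1\|^2+V(\Sigma(t)).
\]

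\emph{Step 2: $\Sigma(t)\preceq\beta I$.} Apply Minkowski's inequality in $L^2(\sigma)$ to $\sqrt{\theta^\top\Sigma\theta}\le\sqrt{\theta^\top\Sigma_1\theta}+|\sqrt{\theta^\top\Sigma\theta}-\sqrt{\theta^\top\Sigma_1\theta}|$. This gives
\[
\sqrt{\trace(\Sigma)/n}\le\sqrt{\trace(\Sigma_1)/n}+\sqrt{V(\Sigma)}\le\sqrt{\trace(\Sigma_1)/n}+r(0).
\]
Hence $\lambda_{\max}(\Sigma)\le\trace(\Sigma)\le\beta$.

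\emph{Step 3: coercivity of $V$.} Put $a=\theta^\top\Sigma\theta$ and $b=\theta^\top\Sigma_1\theta$, both at most $\beta$. Writing $(\sqrt a-\sqrt b)^2=(a-b)^2/(\sqrt a+\sqrt b)^2$ gives
\[
V\ge\frac{1}{4\beta}\int(\theta^\top A\theta)^2\,\sigma(\rmd\theta),\qquad A:=\Sigma-\Sigma_1.
\]
Next use the fourth-moment identity $\int(\theta^\top A\theta)^2\sigma(\rmd\theta)=(\trace(A)^2+2\|A\|_{\rm F}^2)/(n(n+2))$. This yields $V\ge\|A\|_{\rm F}^2/(2\beta n(n+2))$. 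Since $\lambda_{\min}(\Sigma_1)\le\beta$, the right-hand side dominates $\tfrac{\mu}{2}\|A\|_{\rm F}^2$.

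\emph{Step 4: lower bound on $\chi$.} From the Gaussian transport map in the proof of Proposition~\ref{prop:linearlity_Gaussian}, the discrepancy field is affine:
\[
g_t(x)=\tfrac1n(m-m_1)+M(x-m),\qquad M:=\int(1-\alpha(\Sigma,\theta))\theta\theta^\top\sigma(\rmd\theta).
\]
The cross term vanishes in expectation, so
\[
D=\tfrac1{n^2}\|m-m_1\|^2+\trace(M\Sigma M)\ge\tfrac1{n^2}\|m-m_1\|^2+\alpha\|M\|_{\rm F}^2.
\]
The key algebraic step is to bound $\|M\|_{\rm F}$ below by $V$. The pairing $\langle M,A\rangle=\int(1-\sqrt{b/a})(a-b)\,\sigma(\rmd\theta)$ has integrand $(\sqrt a-\sqrt b)^2(\sqrt a+\sqrt b)/\sqrt a\ge(\sqrt a-\sqrt b)^2$, so $\langle M,A\rangle\ge V$. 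Cauchy--Schwarz then gives $V\le\|M\|_{\rm F}\|A\|_{\rm F}$, and Step~3 turns this into $\|M\|_{\rm F}^2\ge c\,V$ with $c$ of order $\mu$. Combining with Step~1, we get $\chi=nD/r^2\ge\min\{1,n\alpha c\}$, and $\chi\le1$ is already \eqref{eq:chi-upper}.

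\emph{Main obstacle.} I expect the difficulty to be matching the exact constant $\chi_*$. The crude chain above loses a factor of about $4$ relative to $\frac{2\alpha\sqrt{\lambda_{\min}(\Sigma_1)}}{\beta^{3/2}(n+2)}$. To recover it, I would sharpen the pointwise bound by retaining $(\sqrt a+\sqrt b)/\sqrt a\ge1+\sqrt{\lambda_{\min}(\Sigma_1)/\beta}$. I would also compare $M$ and $A$ directly through $1-\sqrt{b/a}=(a-b)/(\sqrt a(\sqrt a+\sqrt b))$ with $a\le\beta$, which is where the $\beta^{3/2}$ and $\sqrt{\lambda_{\min}(\Sigma_1)}$ factors should come from.
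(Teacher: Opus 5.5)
Your argument is correct, and the obstacle in your last paragraph is not real: the first sharpening you name already yields exactly $\chi_*$. Set $s:=\sqrt{\lambda_{\min}(\Sigma_1)/\beta}$. Then $\sqrt{b/a}\ge s$, so $\langle M,A\rangle\ge(1+s)V$. Combining this with $\|A\|_{\rm F}^2\le 2\beta n(n+2)V$ from your Step~3 via Cauchy--Schwarz, and using $(1+s)^2\ge 4s$, gives
\[
\|M\|_{\rm F}^2\;\ge\;\frac{(1+s)^2}{2\beta n(n+2)}\,V\;\ge\;\frac{4s}{2\beta n(n+2)}\,V
=\frac{2\sqrt{\lambda_{\min}(\Sigma_1)}}{\beta^{3/2}n(n+2)}\,V=2\mu V .
\]
Hence $n\,\trace(M\Sigma M)\ge 2n\alpha\mu V=\frac{2\alpha\sqrt{\lambda_{\min}(\Sigma_1)}}{\beta^{3/2}(n+2)}V$, and $\chi\ge\chi_*$ follows exactly as in your Step~4. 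The direct comparison of $M$ and $A$ you also mention is unnecessary. Your crude chain loses only the factor $4s\le 4/\sqrt n$, since $\beta\ge\trace(\Sigma_1)\ge n\lambda_{\min}(\Sigma_1)$; so it already suffices whenever $s\le 1/4$. You should have carried out this one-line check rather than leaving the constant open.

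Your Steps~1--2 coincide with the paper's, Minkowski versus reverse triangle inequality being the same estimate. Steps~3--4 take a genuinely different route.

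\emph{The paper's route.} It computes the second directional derivative of $V$. It bounds the weight $\sqrt{\theta^\top\Sigma_1\theta}/(2(\theta^\top\Sigma\theta)^{3/2})$ below by $\sqrt{\lambda_{\min}(\Sigma_1)}/(2\beta^{3/2})$ on the convex set $\{0\prec\Sigma\preceq\beta I\}$, and concludes that $V$ is $\mu$-strongly convex there. Since $V(\Sigma_1)=0$, $\nabla V(\Sigma_1)=0$ and $\nabla V=-H=M$, two estimates follow from this single property: the quadratic growth $V\ge\frac{\mu}{2}\|\Sigma-\Sigma_1\|_{\rm F}^2$ and the gradient inequality $\|H\|_{\rm F}^2\ge 2\mu V$.

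\emph{Your route.} You use the pointwise identities $(\sqrt a-\sqrt b)^2=(a-b)^2/(\sqrt a+\sqrt b)^2$ and $(1-\sqrt{b/a})(a-b)=(1+\sqrt{b/a})(\sqrt a-\sqrt b)^2$, a chord-wise form of the same monotonicity.

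\emph{What each buys.}
\begin{itemize}
\item Your route avoids second derivatives and convexity entirely.
\item It gives somewhat better constants. Your quadratic-growth constant $1/(2\beta n(n+2))$ does not involve $\lambda_{\min}(\Sigma_1)$ and dominates $\mu/2$. Your gradient constant $(1+s)^2/(2\beta n(n+2))$ dominates $2\mu$.
\item The bound $\langle M,A\rangle\ge V$ holds for every $\Sigma\succ0$. So $H(\Sigma)=0$ forces $V(\Sigma)=0$ and hence $\Sigma=\Sigma_1$ without appeal to strict convexity. This is exactly the fact the paper needs in the proof of Corollary~\ref{cor:Gaussian_SW_normalized}.
\item The paper's approach, in exchange, shows where $\mu$ comes from: a uniform Hessian lower bound on the region visited by the flow. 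It also delivers both estimates from one computation.
\end{itemize}
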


\begin{proof}
For the Gaussian law $\rho_t$, the sliced discrepancy field takes the
form $g_t(x)
=
\frac{1}{n}\bigl(m(t)-m_1\bigr)
-
H(\Sigma(t))\bigl(x-m(t)\bigr).$
Consequently,
\begin{align}
D(t)
&=
\frac{1}{n^2}\|m(t)-m_1\|^2
+
\trace\! \left(
H(\Sigma(t))\Sigma(t)H(\Sigma(t))^\top
\right),
\label{eq:gaussian-D}\\
r(t)^2
&=
\frac{1}{n}\|m(t)-m_1\|^2
+
V(\Sigma(t)).
\label{eq:gaussian-r}
\end{align}

We first bound the covariance. By
Eq.~\eqref{eq:dissipation_general}, $r(t)\leq r(0)$, and hence
$V(\Sigma(t))\leq r(0)^2$. On the other hand, the Cauchy--Schwarz
inequality gives
\[
V(\Sigma)
\geq
\Big(
\sqrt{\frac{\trace(\Sigma)}{n}}
-
\sqrt{\frac{\trace(\Sigma_1)}{n}}
\Big)^2.
\]
It follows that $\trace(\Sigma(t))\leq\beta$, and therefore
$\Sigma(t)\preceq\beta I$.

We next establish the coercivity of $V$. For a symmetric matrix
$\Delta$, the first and second directional derivatives of $V$ in the direction $\Delta$ are $D_\Delta V(\Sigma)
= -\trace\bigl(H(\Sigma)\Delta\bigr)$ and 
\begin{align}
D_\Delta^2V(\Sigma)
=
\int_{\calS^{n-1}}
\frac{
\sqrt{\theta^\top\Sigma_1\theta}
}{
2(\theta^\top\Sigma\theta)^{3/2}
}
\bigl(\theta^\top\Delta\theta\bigr)^2
\,\sigma(\rmd\theta).
\label{eq:V-hessian}
\end{align}
On the convex set
$\{\Sigma\succ0\mid\Sigma\preceq\beta I\}$, the coefficient in
\eqref{eq:V-hessian} is bounded below by
$\sqrt{\lambda_{\min}(\Sigma_1)}/(2\beta^{3/2})$. Moreover,
\[
\int_{\calS^{n-1}}
(\theta^\top\Delta\theta)^2
\,\sigma(\rmd\theta)
=
\frac{
(\trace\Delta)^2+2\|\Delta\|_{\rm F}^2
}{
n(n+2)
} \geq
\frac{2}{n(n+2)}\|\Delta\|_{\rm F}^2.
\]
Thus, $V$ is $\mu$-strongly convex on this set. Since
$V(\Sigma_1)=0$ and $\nabla V(\Sigma_1)=0$, strong convexity gives
the second inequality in \eqref{eq:V-strong-lower}. It also implies
\begin{equation}\label{eq:PL-V}
\|H(\Sigma)\|_{\rm F}^2
=
\|\nabla V(\Sigma)\|_{\rm F}^2
\geq
2\mu V(\Sigma).
\end{equation}

Suppose now that $\Sigma(t)\succeq\alpha I$. From
Eq.~\eqref{eq:PL-V}, we have
\[
\trace\! \left(
H(\Sigma(t))\Sigma(t)H(\Sigma(t))^\top
\right)
\geq
2\alpha\mu V(\Sigma(t)).
\]
Set $a:=\|m(t)-m_1\|^2/n$. Using
\eqref{eq:gaussian-D} and \eqref{eq:gaussian-r}, we obtain
\[
\begin{aligned}
\chi(t)
&=
\frac{
a+
n\trace\! \left(
H(\Sigma(t))\Sigma(t)H(\Sigma(t))^\top
\right)
}{
a+V(\Sigma(t))
}\geq
\frac{
a+
\displaystyle
\frac{
2\alpha\sqrt{\lambda_{\min}(\Sigma_1)}
}{
\beta^{3/2}(n+2)
}
V(\Sigma(t))
}{
a+V(\Sigma(t))
}
\geq
\chi_*.
\end{aligned}
\]
The upper bound $\chi(t)\leq1$ follows from
\eqref{eq:chi-upper}.
\end{proof}

\subsection{Proof of Theorem~\ref{thm:convergence}}
\label{app:convergence}

Set $f(t):=r(t)^2$. By \eqref{eq:dissipation_general} and
\eqref{eq:r-chi-def}, for almost every $t\in(0,1)$ such that $f(t)>0$,
\[
\dot f(t)
=
-2\lambda(t)D(t)
=
-\frac{2\lambda(t)\chi(t)}{n}f(t).
\]
Lemma~\ref{lem:chi_gaussian} gives $\chi(t)\geq\chi_*$. Hence,
$\dot f(t)
\leq
-\frac{2\chi_*}{n}\lambda(t)f(t)$.
If $f$ reaches zero, its monotonicity makes the desired estimate
immediate. Otherwise, Gronwall's inequality gives
\[
f(t)
\leq
f(0)
\exp\! \left(
-\frac{2\chi_*}{n}
\int_0^t\lambda(s)\,\rmd s
\right).
\]
Taking square roots yields \eqref{eq:r-decay-general}. In view of
\eqref{eq:cumulative-gain}, we also have $r(t)\to0$ as $t\nearrow1$. Finally, Eq.~\eqref{eq:gaussian-r} gives
\[
\frac{1}{n}\|m(t)-m_1\|^2
\leq
r(t)^2,
\qquad
V(\Sigma(t))
\leq
r(t)^2.
\]
Therefore, $m(t)\to m_1$. Moreover, by \eqref{eq:V-strong-lower}, it holds that 
\[
\frac{\mu}{2}
\|\Sigma(t)-\Sigma_1\|_{\rm F}^2
\leq
V(\Sigma(t))
\leq
r(t)^2,
\]
and hence $\Sigma(t)\to\Sigma_1$ as $t\nearrow1$.

\subsection{Proof of Lemma~\ref{lem:energy_characterization}}
\label{app:energy_lem}

By Proposition~\ref{prop:time_derivative} and
\eqref{eq:dissipation_general}, $r$ is locally absolutely continuous and
$\frac{\rmd}{\rmd t}r(t)^2
=
-2\lambda(t)D(t)$
for almost every $t\in(0,1)$. Whenever $r(t)>0$, it follows that
\begin{equation}\label{eq:r-dot-general}
\dot r(t)
=
-\lambda(t)\frac{D(t)}{r(t)}.
\end{equation}
On the other hand, since
$u(t)=-\lambda(t)g_t(X(t))$ with $X(t)\sim\rho_t$, we have 
$$\bbE\! \left[\|u(t)\|^2\right]
=
\lambda(t)^2D(t).$$ Using $D(t)=\chi(t)r(t)^2/n$ in \eqref{eq:r-dot-general} gives
$$\displaystyle \bbE\! \left[\|u(t)\|^2\right]
=
\frac{n\dot r(t)^2}{\chi(t)}$$ whenever $r(t)>0$.

If $r(t_0)=0$ for some $t_0<1$, then the nonnegativity and
monotonicity of $r^2$ imply that $r(t)=0$ for all $t\geq t_0$.
Moreover, Eq.~\eqref{eq:chi-upper} gives $D(t)=0$ there, and hence both
$\dot r(t)$ and $\bbE[\|u(t)\|^2]$ vanish for almost every
$t\geq t_0$. With the convention $\chi(t)=1$ when $r(t)=0$,
Eq.~\eqref{eq:power-chi} therefore holds for almost every $t\in(0,1)$.
Integration over $(0,\tau)$ gives \eqref{eq:energy-chi}.

Suppose now that $r(t)\to0$ as $t\nearrow1$. Since
$0<\chi(t)\leq1$ whenever $r(t)>0$, and both sides vanish after $r$
reaches zero, Eq.~\eqref{eq:energy-chi} yields
$\calE_u(0,\tau)\geq n\int_0^\tau \dot r(t)^2\,\rmd t$ for every
$\tau\in(0,1)$.
By the Cauchy--Schwarz inequality,
\[
\int_0^\tau\dot r(t)^2\,\rmd t
\geq
\frac{1}{\tau}\! \left(\int_0^\tau\dot r(t)\,\rmd t\right)^2
=
\frac{(r(\tau)-r(0))^2}{\tau}.
\]
Letting $\tau\nearrow1$ gives
$\calE_u(0,1)
\geq
nr(0)^2
=
nSW_2^2(\rho_0,\rho_1)$,
which proves \eqref{eq:energy-lower}.

\subsection{Proof of Theorem~\ref{thm:sw_gain}}
\label{app:energy}

Let
$\tau_*
:=
\inf\{t\in[0,1):r(t)=0\}$ and 
$\inf\varnothing:=1$.
Since $r(0)>0$, we have $r(t)>0$ on $[0,\tau_*)$. Thus,
$\lambda_{\rm SW}$ is well defined there, and
\eqref{eq:r-dot-general} gives, for almost every $t\in(0,\tau_*)$,
\[
\dot r(t)
=
-\lambda_{\rm SW}(t)\frac{D(t)}{r(t)}
=
-\frac{r(t)}{1-t}.
\]
Hence, we have $\displaystyle \frac{\rmd}{\rmd t}\left(\frac{r(t)}{1-t}\right)=0$
for almost every $t\in(0,\tau_*)$, and therefore $r(t)=(1-t)r(0)$ for $t\in[0,\tau_*)$.
If $\tau_*<1$, continuity of $r$ would give $r(\tau_*)=0$, whereas the preceding identity yields $r(\tau_*)=(1-\tau_*)r(0)>0$. Thus, $\tau_*=1$, and \eqref{eq:r-linear} holds on $[0,1)$. Since $\dot r(t)=-r(0)$ for almost every $t\in(0,1)$,
Lemma~\ref{lem:energy_characterization} gives for almost every $ t \in (0,1) $,
\[
\bbE \! \left[ \| u(t)\|^2 \right]=\frac{n\dot r(t)^2}{\chi(t)}
=\frac{nr(0)^2}{\chi(t)} .
\]
Integrating over $[0,1)$ and using
$r(0)=SW_2(\rho_0,\rho_1)$ proves \eqref{eq:energy-sw}.

Finally, $\chi(t)\leq1$ gives $\calE_u(0,1)
\geq
nSW_2^2(\rho_0,\rho_1)$.
If $\chi(t)\geq\underline{\chi}>0$ almost everywhere, then
$1/\chi(t)\leq1/\underline{\chi}$, which gives the upper bound in
\eqref{eq:energy-sw-bounds}.

\subsection{Proof of Corollary~\ref{cor:Gaussian_SW_normalized}}
\label{app:cor_gauss}
We first show that $D(t)>0$ whenever $r(t)>0$. From
\eqref{eq:gaussian-D}, if $D(t)=0$, then
$m(t)=m_1$ and
$\trace\left(
H(\Sigma(t))\Sigma(t)H(\Sigma(t))^\top
\right)=0$.
Since $\Sigma(t)\succ0$, this implies $H(\Sigma(t))=0$. As shown in
the proof of Lemma~\ref{lem:chi_gaussian}, the function $V$ is strictly
convex and satisfies $\nabla V=-H$. Hence,
$H(\Sigma(t))=0$ implies $\Sigma(t)=\Sigma_1$. Equation
\eqref{eq:gaussian-r} then gives $r(t)=0$. Therefore,
$D(t)>0$ whenever $r(t)>0$.

Theorem~\ref{thm:sw_gain} now applies and yields
$r(t)=(1-t)r(0)$. By \eqref{eq:gaussian-r}, we have
$$\|m(t)-m_1\|
\leq
\sqrt{n}\,r(t)
=
\sqrt{n}(1-t)r(0),$$
which proves \eqref{eq:m_bound}. Moreover,
$V(\Sigma(t))\leq r(t)^2$, and
\eqref{eq:V-strong-lower} gives
\[
\|\Sigma(t)-\Sigma_1\|_{\rm F}
\leq
\sqrt{\frac{2}{\mu}}\,r(t)
=
\sqrt{\frac{2}{\mu}}\,(1-t)r(0).
\]
This proves \eqref{eq:Sigma_bound} and, in particular,
$\Sigma(t)\to\Sigma_1$ as $t\nearrow1$.

Define $\overline{\Sigma}(t)=\Sigma(t)$ for $t<1$ and
$\overline{\Sigma}(1)=\Sigma_1$. The map
$\overline{\Sigma}:[0,1]\to\bbS_{>0}^n$ is continuous. Therefore,
\[
\alpha
:=
\min_{t\in[0,1]}
\lambda_{\min}\bigl(\overline{\Sigma}(t)\bigr)
>
0,
\]
and $\Sigma(t)\succeq\alpha I$ on $[0,1)$. Lemma
\ref{lem:chi_gaussian} then gives $\chi(t)\geq\chi_*$ whenever
$r(t)>0$. The energy bounds follow from
Theorem~\ref{thm:sw_gain}.

\section{Proofs in Section \ref{sec:convergence}}

\subsection{Proof of Lemma~\ref{lem:slice-id}}
\label{sec:proof-lemma1}

By the definition of $g_\theta[\rho]$ and pushforward identity
$\rho_\theta=(P_\theta)_\#\rho$,
\begin{align*}
\|g_\theta[\rho]\|_{L_\rho^2}^2
=
\int_{\bbR^n}
\left|
\theta^\top x-\calT_\rho^\theta(\theta^\top x)
\right|^2
\,\rmd\rho(x)=
\int_{\bbR}
\left|
s-\calT_\rho^\theta(s)
\right|^2
\,\rmd\rho_\theta(s)
=
W_2^2(\rho_\theta,\rho_{1,\theta}),
\end{align*}
where the last equality follows from the optimality of
$\calT_\rho^\theta$.

For the second claim, the triangle inequality gives
\[
\begin{aligned}
W_2(\rho_\theta,\rho_{1,\theta})
\leq
W_2(\rho_\theta,\delta_0)
+
W_2(\delta_0,\rho_{1,\theta})
=
m_2(\rho_\theta)+m_2(\rho_{1,\theta})\leq
m_2(\rho)+m_2(\rho_1),
\end{aligned}
\]
where $\delta_0$ denotes the Dirac measure at $0$ and the last inequality follows from
$|\theta^\top x|\leq\|x\|$.

\subsection{Proof of Lemma~\ref{lem:ac-preservation}}
\label{sec:proof-lemma2}

Fix $\theta\in\calS^{n-1}$ and write
$x=s\theta+z$, with $s\in\bbR$ and $z\in\theta^\perp \defi \{ z \in \bbR^n  \mid  \theta^\top z = 0\}$. In these
coordinates, the map in \eqref{eq:iter_map} takes the form
\[
T_{\rho,\theta,\gamma}(s,z)
=
\bigl(\phi(s),z\bigr),
\quad
\phi(s)
:=
(1-\gamma)s+\gamma\calT_\rho^\theta(s).
\]
Since $\calT_\rho^\theta$ is nondecreasing, we have 
$$\phi(s_2)-\phi(s_1)
\geq
(1-\gamma)(s_2-s_1)$$
whenever $s_2>s_1$. Thus, $\phi$ is strictly increasing, and its
inverse on $\phi(\bbR)$ is $(1-\gamma)^{-1}$-Lipschitz. In particular,
\begin{equation}\label{eq:null_phi}
\mu_L^1\bigl(\phi^{-1}(N)\bigr)
=
0
\end{equation}
for every Lebesgue-null set $N\subset\bbR$.

Let $N\subset\bbR^n$ satisfy $\mu_L^n(N)=0$, and denote its section
at $z\in\theta^\perp$ by
$N_z
:=
\{s\in\bbR:(s,z)\in N\}$.
By Fubini's theorem, $\mu_L^1(N_z)=0$ for almost every
$z\in\theta^\perp$. For each such $z$, the corresponding section of
the inverse image is
$\bigl(T_{\rho,\theta,\gamma}^{-1}(N)\bigr)_z
=
\phi^{-1}(N_z)$,
which is null by \eqref{eq:null_phi}. Another application of Fubini's
theorem gives
$\mu_L^n\bigl(T_{\rho,\theta,\gamma}^{-1}(N)\bigr)
=
0$.
Since $\rho\ll\mu_L^n$,
$(T_{\rho,\theta,\gamma})_\#\rho(N)
=
\rho\bigl(T_{\rho,\theta,\gamma}^{-1}(N)\bigr)
=
0$,
and hence
$(T_{\rho,\theta,\gamma})_\#\rho\ll\mu_L^n$.

It remains to verify the second-moment condition. By Minkowski's
inequality and Lemma~\ref{lem:slice-id},
\[
\begin{aligned}
m_2\bigl((T_{\rho,\theta,\gamma})_\#\rho\bigr)
=
\|T_{\rho,\theta,\gamma}\|_{L_\rho^2}
\leq
m_2(\rho)
+
\gamma\|g_\theta[\rho]\|_{L_\rho^2}
\leq
m_2(\rho)
+
\gamma\bigl(m_2(\rho)+m_2(\rho_1)\bigr)
<
\infty.
\end{aligned}
\]
Therefore,
$(T_{\rho,\theta,\gamma})_\#\rho
\in\calP_{2,\mathrm{ac}}(\bbR^n)$.

Finally, the update \eqref{eq:law-update_iter} has this form with
$\gamma=h\lambda(t_k)$. Since
$h\lambda(t_k)\leq h_{\bar t}\Lambda_{\bar t}<1$, the conclusion follows
recursively from
$\rho_0\in\calP_{2,\mathrm{ac}}(\bbR^n)$.

\subsection{Proof of Lemma~\ref{lem:moment-confinement}}
\label{sec:proof-lemma3}

Set $\gamma_k:=h\lambda(t_k)$. Conditionally on the direction history,
Minkowski's inequality and Lemma~\ref{lem:slice-id} give
\[
\begin{aligned}
m_2(\rho_{k+1}^h)
\leq
m_2(\rho_k^h)
+
\gamma_k
\|g_{\theta_k}[\rho_k^h]\|_{L_{\rho_k^h}^2} \leq
(1+\gamma_k)m_2(\rho_k^h)
+
\gamma_k m_2(\rho_1).
\end{aligned}
\]
Writing $a_k:=m_2(\rho_k^h)$ and $c:=m_2(\rho_1)$, we obtain
\[
a_{k+1}+c
\leq
(1+\gamma_k)(a_k+c).
\]
Iteration yields
\[
\begin{aligned}
a_k+c
\leq
(a_0+c)
\prod_{i=0}^{k-1}(1+\gamma_i)\leq
(a_0+c)
\exp\! \left(
h\sum_{i=0}^{k-1}\lambda(t_i)
\right)\leq
\bigl(m_2(\rho_0)+m_2(\rho_1)\bigr)
\Ee^{\Lambda_{\bar t}\bar t}.
\end{aligned}
\]
Dropping $c$ from the left-hand side proves
\eqref{eq:random-moment-bound}.

For the averaged flow, Minkowski's inequality and
Lemma~\ref{lem:slice-id} give
\[
\|\bar g[\rho_t]\|_{L_{\rho_t}^2}
\leq
\int_{\calS^{n-1}}
\|g_\theta[\rho_t]\|_{L_{\rho_t}^2}
\,\sigma(\rmd\theta)
\leq
m_2(\rho_t)+m_2(\rho_1).
\]
Using the integral form of \eqref{eq:ideal-flow}, we therefore have
\[
m_2(\rho_t)
\leq
m_2(\rho_0)
+
\int_0^t
\lambda(s)
\bigl(
m_2(\rho_s)+m_2(\rho_1)
\bigr)
\,\rmd s.
\]
Gronwall's inequality gives
\[
\begin{aligned}
m_2(\rho_t)
\leq
\bigl(m_2(\rho_0)+m_2(\rho_1)\bigr)
\exp\! \left(
\int_0^t\lambda(s)\,\rmd s
\right)\leq
\bigl(m_2(\rho_0)+m_2(\rho_1)\bigr)
\Ee^{\Lambda_{\bar t}\bar t}.
\end{aligned}
\]
Thus, it follows from Lemma~\ref{lem:ac-preservation} that both iterative and averaged laws remain in
$\calK_{\bar t}$ on $[0,\bar t]$.

\subsection{Proof of Lemma~\ref{lem:MS-bound}}
\label{sec:proof-lemma4}

By Lemma~\ref{lem:slice-id} and the definition of
$\calK_{\bar t}$,
$\|g_\theta[\rho]\|_{L_\rho^2}
\leq m_2(\rho)+m_2(\rho_1)\leq M_{\bar t}$.
Since $\sigma$ is a probability measure,
\[
\int_{\calS^{n-1}}
\|g_\theta[\rho]\|_{L_\rho^2}^2
\,\sigma(\rmd\theta)
\leq
M_{\bar t}^2.
\]
Moreover, Jensen's inequality and Fubini's theorem give
\[
\begin{aligned}
\|\bar g[\rho]\|_{L_\rho^2}^2
=
\int_{\bbR^n}
\left\|
\int_{\calS^{n-1}}
g_\theta[\rho](x)\,\sigma(\rmd\theta)
\right\|^2
\rmd\rho(x)\leq
\int_{\calS^{n-1}}
\|g_\theta[\rho]\|_{L_\rho^2}^2
\,\sigma(\rmd\theta).
\end{aligned}
\]
Combining the two estimates proves \eqref{eq:sliced-field-bounds}.

\subsection{Proof of Lemma~\ref{lem:euler-residual}}
\label{sec:proof-lemma5}

By Assumption~\ref{ass:main}, we have
\begin{align*}
\|b(t_k,x_k^h)-b(t_k,y(t_k))\|_{L^2(\Omega_0)}&=\lambda(t_k)
\left\|
\bar g[\rho_k^{h}](x_k^h)
-
\bar g[\rho_{t_k}](y(t_k))
\right\|_{L^2(\Omega_0)}\\
&\le
\Lambda_{\bar t}L_{\bar t}
\|x_k^h-y(t_k)\|_{L^2(\Omega_0)}.
\end{align*}
This proves \eqref{eq:43-rewrite}. We next estimate the local truncation error. By
Lemmas~\ref{lem:moment-confinement} and~\ref{lem:MS-bound}, it holds that 
$\|b(t,y(t))\|_{L^2(\Omega_0)}
\leq
\Lambda_{\bar t}M_{\bar t}$ for $t\in[0,\bar t]$.
The integral form of \eqref{eq:ideal-flow} therefore gives
\begin{equation}\label{eq:Y-lipschitz-time}
\|y(s)-y(t)\|_{L^2(\Omega_0)}
\leq
\Lambda_{\bar t}M_{\bar t}|s-t|
\end{equation}
for $s,t\in[0,\bar t]$. 

The definition of $b$,
Assumption~\ref{ass:main}, and
\eqref{eq:Y-lipschitz-time} yield
\[
\begin{aligned}
&\|b(s,y(s))-b(t,y(t))\|_{L^2(\Omega_0)}\\
&\leq
|\lambda(s)-\lambda(t)|
\|\bar g[\rho_s](y(s))\|_{L^2(\Omega_0)}
+
\lambda(t)
\|\bar g[\rho_s](y(s))
-
\bar g[\rho_t](y(t))\|_{L^2(\Omega_0)}\\
&\leq
M_{\bar t}
\bigl(
\Lambda'_{\bar t}
+
\Lambda_{\bar t}^2L_{\bar t}
\bigr)
|s-t|.
\end{aligned}
\]

Finally, define
$r_{t,h}
:=
\int_t^{t+h}
\bigl(
b(s,y(s))-b(t,y(t))
\bigr)
\,\rmd s$.
The integral form of the averaged dynamics gives
\eqref{eq:44-rewrite}, while
\[
\begin{aligned}
\|r_{t,h}\|_{L^2(\Omega_0)}
&\leq
M_{\bar t}
\bigl(
\Lambda'_{\bar t}
+
\Lambda_{\bar t}^2L_{\bar t}
\bigr)
\int_t^{t+h}|s-t|\,\rmd s=
B_{\bar t}h^2,
\end{aligned}
\]
where
$B_{\bar t}
\defi
\frac{M_{\bar t}}{2}
\bigl(
\Lambda'_{\bar t}
+
\Lambda_{\bar t}^2L_{\bar t}
\bigr)$.

\end{document}